\numberwithin{equation}{section}
\newcommand{\diag}{\mathrm{diag}}
\newcommand{\argmin}{\operatornamewithlimits{argmin}}
\newcommand{\vectx}{{\text{vec}}}
\newtheorem{theorem}{Theorem}
\newtheorem{lemma}{Lemma}
\newtheorem{remark}{Remark}
\providecommand{\keywords}[1]
{
  \small	
  \textbf{\textit{Key words---}} #1
}
\title{A Robust Hessian-based Trust Region Algorithm for Spherical Conformal Parameterizations}
\author{Zhong-Heng Tan\thanks{School of Mathematics and Shing-Tung Yau Center, Southeast University,
Nanjing 211189; Nanjing Center for Applied Mathematics, Nanjing 211135, People’s Republic of China.}
\and Tiexiang Li\thanks{Corresponding author. School of Mathematics and Shing-Tung Yau Center, Southeast University,
Nanjing 211189; Nanjing Center for Applied Mathematics, Nanjing 211135, People’s Republic of China.}
\and Wen-Wei Lin\thanks{Nanjing Center for Applied Mathematics, Nanjing 211135, People's Republic of China; Department of Applied Mathematics, Yang Ming Chiao Tung University, Hsinchu 300, Taiwan.}
\and Shing-Tung Yau\thanks{Yau Mathematical Sciences Center, Tsinghua University, Beijing 100084, China.}}
\date{}
\begin{document}
\maketitle

\abstract{Surface parameterizations are widely applied in computer graphics, medical imaging and transformation optics. In this paper, we rigorously derive the gradient vector and Hessian matrix of the discrete conformal energy for spherical conformal parameterizations of simply connected closed surfaces of genus-$0$. In addition, we give the sparsity structure of the Hessian matrix, which leads to a robust Hessian-based trust region algorithm for the computation of spherical conformal maps. Numerical experiments demonstrate the local quadratic convergence of the proposed algorithm with low conformal distortions. We subsequently propose an application of our method to surface registrations that still maintains local quadratic convergence.}

\keywords{spherical conformal parameterization, conformal energy minimization, Riemann surfaces of genus-0, Hessian matrix, local quadratic convergence}

\textbf{MSC(2020)}   49Q10, 52C26, 65D18, 65F05, 68U05

\maketitle

\section{Introduction}

The Poincar\'e-Klein-Koebe theorem is a fundamental theorem in Riemann geometry that states that a simply connected Riemann surface $\mathcal{M}$ is conformally equivalent to either a unit sphere $\mathbb{S}^2$, a complex plane $\mathbb{C}$ or a unit disk $\mathbb{D}$. From a numerical point of view, for a given discrete triangular mesh of $\mathcal{M}$, we require that the conformal parameterization between $\mathcal{M}$ and the canonical shape is a bijective map from $\mathcal{M}$ to $\mathbb{S}^2$ (Riemann surface of genus-zero) or $\mathbb{D}$ (Riemann surface with a single boundary) while minimizing the total angle distortion induced by the Dirichlet energy \cite{LBPS02,MDMM02,MHWW17}. Conformal parameterizations, also known as angle-preserving parameterizations, preserve the intersection angle of two arbitrary intersecting curves on the surface up to the map. In other words, conformal parameterizations preserve the local shapes of the surfaces. The development of digital 3D object technologies has enabled the representation of smooth surfaces in the real world through high-resolution meshes on computers.
High-resolution meshes have the ability to characterize the intricate geometrical structure of surfaces. Nevertheless, high resolution also makes computations and processes more challenging on complicated meshes.
Recently, surface parameterizations have received increasing attention due to their ability to transform intricate geometries into simply shaped regions. It is clear that the parameterization of a surface is not necessarily unique, even when the target region is given. Conformal parameterization is one of the most commonly used parameterization methods.
As a result, it is used in several fields, such as medical imaging \cite{XGYW04,JNTL07,KCRS13,RSWZ17}, texture mapping \cite{SHSA00,LBPS02}, and transformation optics \cite{LXHC14,Hoss22}.

A detailed overview of surface conformal parameterizations can be found in classical surveys \cite{MFKH05,ASEP06,KHBL07,PAGU08,XDWZ11}.
Numerous methods for conformal parameterizations have been proposed, including boundary first flattening \cite{RSKC17}, discrete conformal equivalence \cite{MGBS21}, harmonic energy minimization \cite{RLZW14} and solving the Laplacian-Beltrami equation with a particular Dirac delta function as the right-hand term \cite{SHSA00}.
In earlier years, based on the time-flow technique, Jin et al. \cite{MJJK08} and Yang et al. \cite{YLRG09} proposed the discrete Ricci flow with conformal circle packing metric, which is a negative gradient flow of some convex energy and can be accelerated by the Newton method. However, the evaluations of the related coefficients at each iterative step are somewhat complicated due to the circle packing metric. Huang et al. \cite{WQXD14} proposed the quasi-implicit Euler method (QIEM) in view of the nonlinear heat diffusion process with normalization on $\mathbb{S}^2$. However, the convergence can be very slow, and there is thus far no theory to support its convergence.
With the great development of GPU-based computation in recent years, parallelizable algorithms were subsequently proposed \cite{GPYL20}.

In 2015, Choi et al. \cite{PTKC15} applied the quasi-conformal approach to spherical parameterization and proposed the FLASH algorithm, which produces the composition of the quasi-conformal maps with the same Beltrami coefficient to obtain the conformal map.
In 2019, Yueh et al. \cite{MHTL19} proposed a north-south hemisphere alternating iteration, called spherical conformal energy minimization (SCEM), which alternatively maps the north and south poles to infinity and fixes the corresponding hemisphere while updating the other hemisphere. From numerical experiments, both FLASH and SCEM are satisfactory with high accuracy and effectiveness compared to those of the other previously mentioned methods.
FLASH is a direct method that solves two double-sized linear systems compared to SCEM, which utilizes stereographic projection to transform the unit sphere $\mathbb{S}^2$ to the extended complex plane $\overline{\mathbb{C}}$. The resulting map, which is composed of the inverse stereographic projection and the computed conformal map, is the ideal conformal parameterization from the given Riemann surface of genus zero to $\mathbb{S}^2$. In addition, it simplifies the spherical constraint problem in $\mathbb{R}^3$ into an unconstrained problem in $\overline{\mathbb{C}}$, making it highly efficient. However, the stereographic projection maps the north pole of $\mathbb{S}^2$ to infinity and others to $\mathbb{C}$. Therefore, the computational error near the north pole becomes relatively large in practical applications.
To mitigate this error, the north-south hemisphere alternatingly iterative SCEM reduces the conformal distortion near the poles and has numerically sublinear convergence and asymptotically $R$-linear convergence. However, larger computational errors in conformal quantities are transferred to the junction of the equator. From a theoretical point of view, both FLASH and SCEM consider minimizing the Dirichlet energy on the extended complex plane $\overline{\mathbb{C}}$. From a numerical point of view, both FLASH and SCEM may get larger angle distortions near the junction region of the computational conformal map because they are connected by two submaps via stereographic projection.

Different from FLASH and SCEM, in this paper, we solve the spherical conformal energy optimization problem to obtain the conformal map by directly employing the spherical coordinates for the representation of the conformal energy. Unlike the stereographic projection, the spherical coordinate representation is not a conformal map. We derive the related gradient vector and Hessian matrix and develop a robust Hessian-based trust region (HBTR) algorithm that has local quadratic convergence. The main contributions of this paper can be divided into three folds.

\begin{itemize}
\item We use the spherical coordinates to represent the conformal energy on $\mathbb{S}^2$ and develop the HBTR algorithm to directly minimize the discrete conformal energy, which is described as the difference between the discrete Dirichlet energy on $\mathbb{S}^2$ and the image area of the conformal map. In \Cref{subsec:convdiscrete}, numerical experiments demonstrate that the discrete conformal energy, mean and standard deviation (SD) are reduced to $1/4$ and $1/2$, respectively, when the mesh size is refined by a half.

\item We give the explicit derivation of the gradient vector and Hessian matrix and present the significant sparsity of the Hessian matrix. This benefits the development of fast computations for calculating the Newton iterations. Combined with the trust region technique, we thus propose the robust HBTR algorithm.

\item Numerical experiments and comparisons with existing algorithms demonstrate the advantages of our method in terms of conformality and robustness. The computational cost of the HBTR algorithm is of the same order of magnitude as that of FLASH and SCEM but slightly more expensive for most models due to spending a high percentage of time selecting the convergence region. Furthermore, the bijectivity can almost be guaranteed because the conformal energy is expressed by the unity of spherical coordinates. For few meshes resulting in nonbijective maps, folding can be easily removed by performing a postprocessing algorithm \cite{Floa03}.
\end{itemize}

This paper is organized as follows. The \Cref{sec:conformal} provides a brief review of the conformal map and conformal energy. Then, in \Cref{sec:Hessian} we present the theoretical derivation of the gradient vector and Hessian matrix. In \Cref{sec:algorithm}, we describe the proposed algorithm for spherical conformal parameterization that uses the sparsity property of the Hessian matrix. The numerical performance and comparison with other methods are presented in \Cref{sec:experiments}. In \Cref{sec:registration}, we demonstrate an application of the algorithm to surface registration. A concluding mark is given in \Cref{sec:conclusion}.

The frequently used notations in this paper are listed here.
Bold letters, e.g., $\mathbf{a},\mathbf{s}$ denote vectors.
$\mathbf{a}_i$ denotes the $i$-th entry of $\mathbf{a}$.
$\mathbf{1}_{m\times n}$ denotes the $m\times n$ matrix of all ones. The notation without subscript $\mathbf{1}$ denotes the vector of all ones with proper dimension.
$\mathbf{e}_i$ denotes the $i$-th column of the identity matrix with the proper dimension.
$\big[ A \big]_{ij}$ denotes the $(i,j)$-th entry of matrix $A$.
$\diag(\mathbf{a})$ denotes the diagonal matrix with the $(i,i)$-th entry being $\mathbf{a}_i$.
$[v_i,v_j]$ denotes the edge formed by $v_i$ and $v_j$.
$[v_i,v_j,v_k]$ denotes the triangle formed by $v_i$, $v_j$ and $v_k$, and
$\big|[v_i,v_j,v_k]\big|$ denotes the area of that triangle.
The other notations are defined wherever they appear.

\section{Conformal map and Conformal Energy} \label{sec:conformal}

In this section, we briefly review the continuous conformal map and the conformal energy. Readers can refer to \cite{XGST07book,XGST20book,Hutc91,UPKP93,SHSA00} for more details.

Let $\mathcal{M} = \mathbf{r}(u,v)$ be a surface and $f$ be a continuous and bijective vector-valued map on $\mathcal{M}$, which maps $\mathcal{M}$ to another surface $\widetilde{\mathcal{M}} = \tilde{\mathbf{r}}(u,v)$ with $(u,v)\in \Omega \subset \mathbb{R}^2$. Let $\gamma_1,\gamma_2 \subset \mathcal{M}$ be two arbitrary curves intersecting at a point. Then, we call $f$ conformal if the intersecting angle of $\gamma_1$ and $\gamma_2$ equals that of $f(\gamma_1)$ and $f(\gamma_2)$ in $\widetilde{\mathcal{M}}$. Equivalently, $f$ is conformal if and only if the first fundamental forms $I(u,v)$ and $\tilde{I}(u,v)$ of $\mathcal{M}$ and $\widetilde{\mathcal{M}}$ with respect to $(u,v)$, i.e.,
\begin{align*}
    I(u,v) = \begin{bmatrix}
        \langle \mathbf{r}_u, \mathbf{r}_u \rangle &
        \langle \mathbf{r}_u, \mathbf{r}_v \rangle \\
        \langle \mathbf{r}_v, \mathbf{r}_u \rangle &
        \langle \mathbf{r}_v, \mathbf{r}_v \rangle \\
    \end{bmatrix},\quad
    \tilde{I}(u,v) = \begin{bmatrix}
        \langle \tilde{\mathbf{r}}_u, \tilde{\mathbf{r}}_u \rangle &
        \langle \tilde{\mathbf{r}}_u, \tilde{\mathbf{r}}_v \rangle \\
        \langle \tilde{\mathbf{r}}_v, \tilde{\mathbf{r}}_u \rangle &
        \langle \tilde{\mathbf{r}}_v, \tilde{\mathbf{r}}_v \rangle \\
    \end{bmatrix},
\end{align*}
satisfy
\begin{align*}
    \tilde{I}(u,v) = \eta(u,v) I(u,v),
\end{align*}
where $\mathbf{r}_u:= \frac{\partial \mathbf{r}}{\partial u}$, $\mathbf{r}_v:= \frac{\partial \mathbf{r}}{\partial v}$ and $\eta(u,v)$ is a positive scalar function on $\Omega$. Then, the conformal energy functional \cite{Hutc91} of $f$ is defined as
\begin{align} \label{def: continuous conformal energy}
    E_C(f) = \frac{1}{2}\int_{\mathcal{M}} \|\nabla_\mathcal{M} f\|_F^2 \mathrm{d}\sigma - \mathcal{A}(f),
\end{align}
where $\nabla_\mathcal{M}$ is the tangential gradient, $\mathrm{d}\sigma$ is the area element on $\mathcal{M}$ and $\mathcal{A}(f)$ is the area of the image surface $\widetilde{\mathcal{M}} = f(\mathcal{M})$. Let us note that the first term in \eqref{def: continuous conformal energy} is the Dirichlet energy functional of $f$.
It has been proven that \cite{Hutc91,UPKP93}
\begin{itemize}
\item $E_C(f)\geq 0$,
\item $E_C(f) = 0$ if and only if $f$ is conformal.
\end{itemize}

From these properties, it is reasonable to adopt conformal energy minimization (CEM) to obtain the conformal map. In the continuous scheme, if the target surface $\widetilde{\mathcal{M}}$ is given, its area remains constant. Thus, a method to solve \eqref{def: continuous conformal energy} is to minimize the Dirichlet energy, which is a quadratic functional. In the discrete scheme, the area is actually not a constant. Introducing the discrete area term typically yields a better conformal parameterization. It is easy to verify that the optimization problem \eqref{def: continuous conformal energy} has a trivial solution $f = constant$. In this case, all vertices shrink into a point, and the area of the formed region becomes $0$, which violates the requirement of $f$ being bijective. We refer to the trivial solution as 'degeneration'. Directly minimizing the Dirichlet energy without any additional constraints often leads to degeneration in practical computations. The area term is a natural penalty for the parameterization and hence can greatly weaken the degeneration during iteration \cite{Hutc91}. On the other hand, introducing the area term can further decrease the conformal distortion. For disk parameterizations, Yueh et al.\cite{MHWW17} proposed a disk conformal parameterization algorithm through Dirichlet energy minimization, while a novel algorithm minimizing the disk conformal energy was later proposed by Kuo et al. \cite{YCWW21}, in which they derived a particular and simple area expression of a discrete disk represented by polar coordinates to design a fast algorithm. As a consequence, conformal energy minimization \cite{YCWW21} gives significantly less conformal energy and angle distortion compared  to those of the Dirichlet energy minimization \cite{MHWW17}.

For the spherical parameterization, it is clearly feasible to introduce the area term. However, it is a significant challenge that the area of the discrete sphere, which is the sum of areas of all triangles formed by vertices, cannot be simply expressed as in the disk case, even if both vertices are of unit length. Simultaneously, the gradient vector and Hessian matrix of the conformal energy also have complicated expressions. In the next section, we carefully derive the expression of the conformal energy for the spherical parameterization with spherical coordinates, along with its gradient vector and Hessian matrix. The Hessian matrix has a special sparsity structure as the Laplacian matrix, which guides the fast construction and computation associated with it.

\section{Discrete Conformal Energy on Closed Surfaces} \label{sec:Hessian}

Let $M$ be a discrete closed surface of genus-$0$ composed of triangles, called a triangulation. Given  that $M$ has $n$ vertices, we denote the vertex set, edge set and triangle face set of $M$ by $\mathcal{V}(M) = \{v_i,i = 1,2,\cdots,n\}$,
$\mathcal{E}(M) = \{[v_i,v_j]\}$ and
$\mathcal{F}(M) = \{T_{ijk} := [v_i,v_j,v_k]\},$
respectively.

In this paper, we aim to find a piecewise linear map $f$ that maps surface $M$ to a discrete unit sphere $\mathbb{S}^2$ conformally, in which all of the vertices are unit vectors.
Every point in a triangle can be represented as the convex combination of its vertices via barycentric coordinates,
\begin{align} \label{eq:bary}
    f(\hat{v}) = \lambda_i(\hat{v}) f(v_i) + \lambda_j(\hat{v}) f(v_j) + \lambda_k(\hat{v}) f(v_k), \text{ for } \hat{v} \in T_{ijk},
\end{align}
where
\begin{align*}
    \lambda_i(\hat{v}) = \frac{|[\hat{v},v_j,v_k]|}{|T_{ijk}|}, \quad
    \lambda_j(\hat{v}) = \frac{|[v_i,\hat{v},v_k]|}{|T_{ijk}|}, \quad
    \lambda_k(\hat{v}) = \frac{|[v_i,v_j,\hat{v}]|}{|T_{ijk}|}.
\end{align*}
After $f(v_i), i = 1,2,\cdots,n$ is obtained, the discrete sphere is also formed. Hence, we compute $\mathbf{f} = [\mathbf{f}_1^\top, \mathbf{f}_2^\top, \cdots, \mathbf{f}_n^\top]^\top \in \mathbb{R}^{n\times 3}$ with $\mathbf{f}_i = f(v_i)  \in \mathbb{R}^{1\times 3}$ for the given triangulation of $M$.

From the perspective of the conformal energy \eqref{def: continuous conformal energy}, the conformal map $f$ is obtained by solving the optimization problem
\begin{align}
    \mathbf{f}^* = \argmin_{\mathbf{f}: \|\mathbf{f}_i\| = 1,i = 1,2,\cdots,n} E_C(f), \label{opt:conformal}
\end{align}
where the discrete conformal energy $E_C(f)$ of $f$ on $M$ is given by
\begin{align} \label{def:CE}
    E_C(f) = \frac{1}{2} \langle L\mathbf{f}, \mathbf{f} \rangle - A(f),
\end{align}
in which $L$ is the Laplacian matrix defined as
\begin{align} \label{def:L}
    \big[L\big]_{ij} = \begin{cases}
        -w_{ij} & \text{if}~i\neq j,~[v_i,v_j] \in \mathcal{E}(M),\\
        \sum_{k\in \mathcal{N}(i)} w_{ik} & \text{if}~i = j,\\
        0 & \text{if}~[v_i,v_j] \notin \mathcal{E}(M),
    \end{cases}
\end{align}
with cotangent weights $w_{ij} = \frac{1}{2} (\cot\alpha_{ij} + \cot\alpha_{ji})$. Here, $\alpha_{ij}$ and $\alpha_{ji}$ are the angles opposite to the edge $[v_i,v_j]$
\footnote{A closed surface has no boundary. Therefore, an edge must correspond to $2$ opposite angles.}
and $\mathcal{N}(i)$ is the index set of the adjacent vertices $v_i$, as shown in
\cref{fig:OppositeAngleOneRing}. Readers can refer to \cite[Chap. 24]{XGST20book} for a detailed derivation of the discrete Dirichlet energy on triangulation. $A(f)$ in \eqref{def:CE} is the area of the triangulation $f(M)$, which can be written as the sum of areas of all triangles in $f(M)$, that is, $
A(f) = \sum_{T_{ijk} \in \mathcal{F}(M)} |f(T_{ijk})|.
$

Let $v_{ij} = v_i - v_j$ and $\mathbf{f}_{ij} = \mathbf{f}_{i} - \mathbf{f}_{j}$ for simplicity,
$\alpha_{ij}(f)$ and $\alpha_{ki}(f)$ be the angle, respectively, opposite to the edges $[\mathbf{f}_i,\mathbf{f}_j]$ and $[\mathbf{f}_k,\mathbf{f}_i] \in \mathcal{E}\big(f(M)\big)$,
And let $L(f)$ be a Laplacian matrix with respect to $f$ as $L$, by replacing the cotangent weights as $w_{ij}(f) = \frac{1}{2} (\cot\alpha_{ij}(f) + \cot\alpha_{ji}(f))$ in the target sphere $\mathbb{S}^2$. $A(f)$ and its gradient with respect to $\mathbf{f}$ can be represented as in the following lemma.

\begin{lemma} \label{lemma:Af}
The area of the image of $f$ and its gradient can be represented as
\begin{align*}
        &A(f) = \frac{1}{2} \langle L(f)\mathbf{f}, \mathbf{f} \rangle, \\ 
        &\nabla_\mathbf{f} A(f) = L(f)\mathbf{f}.
    \end{align*}
\end{lemma}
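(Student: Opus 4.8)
The plan is to establish both identities by reducing everything to a single triangle and then summing. For a nondegenerate triangle $T_{ijk}$ with vertices $\mathbf{f}_i,\mathbf{f}_j,\mathbf{f}_k\in\mathbb{R}^3$, the classical cotangent formula for the Dirichlet energy of a piecewise-linear map states that the contribution of $T_{ijk}$ to $\langle L(f)\mathbf{f},\mathbf{f}\rangle$ equals $\frac12\bigl(\cot\alpha_{ij}(f)\,\|\mathbf{f}_{ij}\|^2+\cot\alpha_{jk}(f)\,\|\mathbf{f}_{jk}\|^2+\cot\alpha_{ki}(f)\,\|\mathbf{f}_{ki}\|^2\bigr)$, where the angles are the interior angles of the image triangle opposite the corresponding edges. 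The first step is therefore to prove the purely trigonometric identity that, for any triangle with side lengths $a,b,c$ and opposite angles $A,B,C$,
\begin{align*}
  a^2\cot A + b^2\cot B + c^2\cot C = 4\,|\text{Area}|.
\end{align*}
This follows from $\cot A = \cos A/\sin A$, the law of cosines $\cos A = (b^2+c^2-a^2)/(2bc)$, and the area formula $|\text{Area}| = \tfrac12 bc\sin A$, which gives $a^2\cot A = (b^2+c^2-a^2)/(4|\text{Area}|)\cdot a^2$; summing the three analogous terms, the numerator telescopes to $a^2(b^2+c^2-a^2)+b^2(c^2+a^2-b^2)+c^2(a^2+b^2-c^2)$, which is exactly $16|\text{Area}|^2$ by Heron's formula. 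Hence the per-triangle Dirichlet contribution is $2|f(T_{ijk})|$, and summing over $\mathcal{F}(M)$ yields $\langle L(f)\mathbf{f},\mathbf{f}\rangle = 2A(f)$, i.e. $A(f)=\tfrac12\langle L(f)\mathbf{f},\mathbf{f}\rangle$.

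For the gradient, the subtle point is that $L(f)$ itself depends on $\mathbf{f}$ through the angles, so one cannot simply differentiate the quadratic form as if $L(f)$ were constant. The key observation I would use is that the area $A(f)=\sum_{T_{ijk}}|f(T_{ijk})|$ can be differentiated directly and elementarily: the area of a single triangle is $|f(T_{ijk})|=\tfrac12\|\mathbf{f}_{ij}\times\mathbf{f}_{ik}\|$ (or, componentwise in the $2$D image coordinates, $\tfrac12|\det[\mathbf{f}_{ij};\mathbf{f}_{ik}]|$), and its gradient with respect to each vertex is a standard expression whose assembly over all triangles produces precisely $L(f)\mathbf{f}$. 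Concretely, differentiating $2|f(T_{ijk})|$ with respect to $\mathbf{f}_i$ and using the cotangent identity once more gives $\cot\alpha_{ij}(f)\,\mathbf{f}_{ij}+\cot\alpha_{ki}(f)\,\mathbf{f}_{ik}$ after the appropriate simplification (the cross-terms arrange so that the coefficient of $\mathbf{f}_i-\mathbf{f}_j$ is the cotangent of the angle at the third vertex). Summing the contributions of all triangles incident to $v_i$ reproduces exactly the $i$-th block of $L(f)\mathbf{f}$, namely $\sum_{j\in\mathcal{N}(i)} w_{ij}(f)\,\mathbf{f}_{ij}$.

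An alternative and perhaps cleaner route for the gradient is to invoke the identity $A(f)=\tfrac12\langle L(f)\mathbf{f},\mathbf{f}\rangle$ already proved, write $\nabla_\mathbf{f}A = L(f)\mathbf{f} + \tfrac12\langle (\nabla_\mathbf{f}L(f))\mathbf{f},\mathbf{f}\rangle$, and then show that the second term vanishes. This is the content of the well-known fact that the discrete conformal (or Dirichlet) energy is stationary to first order under variations of the cotangent weights induced by moving the vertices — equivalently, that $\sum_{[v_i,v_j]} \bigl(\partial w_{ij}(f)/\partial \mathbf{f}_\ell\bigr)\|\mathbf{f}_{ij}\|^2 = 0$ for every $\ell$. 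I expect this vanishing to be the main obstacle: it requires carefully grouping the weight derivatives triangle-by-triangle and showing the per-triangle sum $\partial_{\mathbf{f}_\ell}\bigl(\cot A\,a^2+\cot B\,b^2+\cot C\,c^2\bigr)$ is zero, which is just the differentiated form of the trigonometric identity $a^2\cot A+b^2\cot B+c^2\cot C=4|\text{Area}|$ combined with $\partial_{\mathbf{f}_\ell}(4|\text{Area}|)$ matching $\partial_{\mathbf{f}_\ell}$ of the unweighted edge-length terms — in other words, it closes the loop back to the first identity. Either way, the whole lemma rests on the single elementary triangle identity above; the rest is bookkeeping over $\mathcal{F}(M)$.
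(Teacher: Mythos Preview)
Your proposal is correct and, in its main line (per-triangle cotangent identity for the area, then direct differentiation of the triangle area for the gradient), matches the paper's proof. The only cosmetic difference is that the paper derives the identity $\cot\alpha_{ij}(f)\|\mathbf{f}_{ij}\|^2+\cot\alpha_{jk}(f)\|\mathbf{f}_{jk}\|^2+\cot\alpha_{ki}(f)\|\mathbf{f}_{ki}\|^2=4|f(T_{ijk})|$ via cross-product and triple-product manipulations (writing $|f(T_{ijk})|=\|\mathbf{f}_{ij}\times\mathbf{f}_{jk}\|^2/(4|f(T_{ijk})|)$ and expanding), whereas you go through the law of cosines and Heron's formula; both are standard and equally short.

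One caution about your ``alternative route'' for the gradient: it is not an independent argument. Differentiating the per-triangle identity $a^2\cot A+b^2\cot B+c^2\cot C=4|\text{Area}|$ yields
\[
\sum_{\text{edges}}\|\cdot\|^2\,\partial_{\mathbf{f}_\ell}\cot(\cdot)\;+\;\sum_{\text{edges}}\cot(\cdot)\,\partial_{\mathbf{f}_\ell}\|\cdot\|^2 \;=\; 4\,\partial_{\mathbf{f}_\ell}|\text{Area}|,
\]
so the vanishing of the first sum is \emph{equivalent} to the statement $\sum\cot(\cdot)\,\partial_{\mathbf{f}_\ell}\|\cdot\|^2=4\,\partial_{\mathbf{f}_\ell}|\text{Area}|$, which is exactly your Route~1 per-triangle gradient formula. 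You seem to recognize this (``it closes the loop back''), but then you should not present Route~2 as a cleaner alternative---it simply restates Route~1 after a detour, and the appeal to a ``well-known fact'' about stationarity of the cotangent weights is not a shortcut here. Stick with the direct computation; that is precisely what the paper does.
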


\begin{proof}
The area of a triangle $f(T_{ijk}) \in \mathcal{F}(f(M))$ can be calculated by
\begin{align}
    |f(T_{ijk})| &= \frac{1}{2}\|\mathbf{f}_{ij}\times \mathbf{f}_{jk}\|
    = \frac{\|\mathbf{f}_{ij}\times \mathbf{f}_{jk}\|^2}{4|f(T_{ijk})|}\nonumber \\
    &= \frac{\langle \mathbf{f}_{jk}\times (\mathbf{f}_{ij}\times \mathbf{f}_{jk}), \mathbf{f}_{ij} \rangle}{4|f(T_{ijk})|}
    = \frac{\langle \mathbf{f}_{jk}\times (\mathbf{f}_{ki}\times \mathbf{f}_{ij}), \mathbf{f}_{ij} \rangle}{4|f(T_{ijk})|} \label{eq:deripApistart}\\
    &= \frac{\big\langle \langle \mathbf{f}_{ij}, \mathbf{f}_{jk} \rangle \mathbf{f}_{ki} - \langle \mathbf{f}_{jk}, \mathbf{f}_{ki} \rangle \mathbf{f}_{ij}, \mathbf{f}_{ij} \big\rangle}{4|f(T_{ijk})|} \label{eq:deripApiend}\\
    &= -\frac{
    \langle \mathbf{f}_{ki}, \mathbf{f}_{ij} \rangle \langle \mathbf{f}_{jk}, \mathbf{f}_{jk} \rangle +
    \langle \mathbf{f}_{ij}, \mathbf{f}_{jk} \rangle \langle \mathbf{f}_{ki}, \mathbf{f}_{ki} \rangle +
    \langle \mathbf{f}_{jk}, \mathbf{f}_{ki} \rangle \langle \mathbf{f}_{ij},\mathbf{f}_{ij} \rangle}{8|f(T_{ijk})|}\nonumber\\
    &= \frac{1}{4} \big( \cot\alpha_{jk}(f) \|\mathbf{f}_{jk}\|^2 + \cot\alpha_{ki}(f) \|\mathbf{f}_{ki}\|^2 + \cot\alpha_{ij}(f) \|\mathbf{f}_{ij}\|^2 \big), \label{eq:3cotf}
\end{align}
In the last equation \eqref{eq:3cotf}, we use the formula $\cot\alpha_{ij}(f) = -\frac{\langle \mathbf{f}_{jk}, \mathbf{f}_{ki} \rangle}{2|T_{ijk}|}$.
Hence, the area $A(f)$ can be represented as
\begin{align*}
    A(f) &= \sum_{T_{ijk} \in \mathcal{F}(M)} |f(T_{ijk})| \\
    &= \frac{1}{4}\sum_{T_{ijk} \in \mathcal{F}(M)} \big( \cot\alpha_{jk}(f) \|\mathbf{f}_{jk}\|^2 + \cot\alpha_{ki}(f) \|\mathbf{f}_{ki}\|^2 + \cot\alpha_{ij}(f) \|\mathbf{f}_{ij}\|^2 \big)\\
    &= \frac{1}{2} \sum_{ij:[v_i,v_j]\in \mathcal{E}(M)} w_{ij}(f) \|\mathbf{f}_{ij}\|^2
    = \frac{1}{2} \langle L(f) \mathbf{f}, \mathbf{f} \rangle.
\end{align*}
Then, by using the derivation of \eqref{eq:deripApistart}-\eqref{eq:deripApiend}, we have
\begin{align}
    \frac{\partial |f(T_{ijk})|}{\partial \mathbf{f}_i} =& \frac{2\mathbf{f}_{jk}\times (\mathbf{f}_{ij} \times \mathbf{f}_{jk})}{8|f(T_{ijk})|}
    = \frac{\langle \mathbf{f}_{ij}, \mathbf{f}_{jk} \rangle \mathbf{f}_{ki} - \langle \mathbf{f}_{jk}, \mathbf{f}_{ki} \rangle \mathbf{f}_{ij}}{4|f(T_{ijk})|} \nonumber\\
    =& \frac{1}{2}\big(\cot\alpha_{ij}(f) \mathbf{f}_{ij} - \cot\alpha_{ki}(f) \mathbf{f}_{ki}\big) \in \mathbb{R}^{1\times 3}, \label{eq:pfV}
\end{align}
Hence, the derivative of $A(f)$ with respect to $\mathbf{f}_i$ is
\begin{align*} 
    \frac{\partial A(f)}{\partial \mathbf{f}_i} =
    \sum_{T_{ijk} \in \mathcal{F}(M)} \frac{\partial |f(T_{ijk})|}{\partial \mathbf{f}_i}
    = \frac{1}{2}\sum_{T_{ijk} \in \mathcal{F}(M)} \big(\cot\alpha_{ij}(f) \mathbf{f}_{ij} - \cot\alpha_{ki}(f) \mathbf{f}_{ki}\big)
    = \mathbf{e}_i^\top L(f) \mathbf{f},
\end{align*}
for $i = 1,2,\cdots,n$. This completes the proof.
\end{proof}

\begin{figure}[htp]
    \centering
\subfloat[Opposite angles]{\label{subfig:OppositeAngle}
    \includegraphics[clip,trim = {1cm 1cm 0cm 1cm},width = 0.4\textwidth]{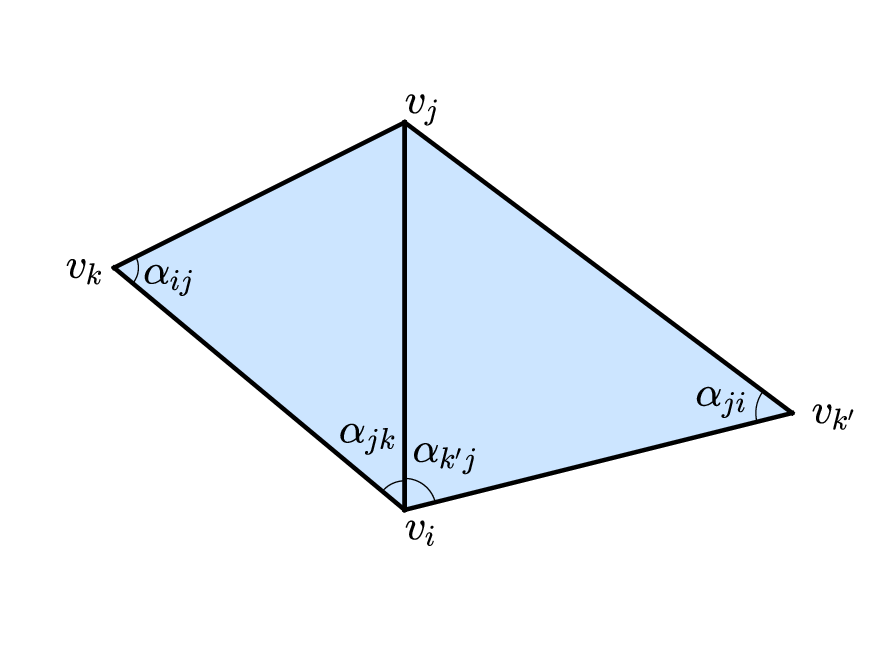}
}
\subfloat[Adjacent vertices indices]{\label{subfig:OneRing}
    \includegraphics[clip,trim = {1.5cm 0.5cm 1.4cm 0cm},width = 0.3\textwidth]{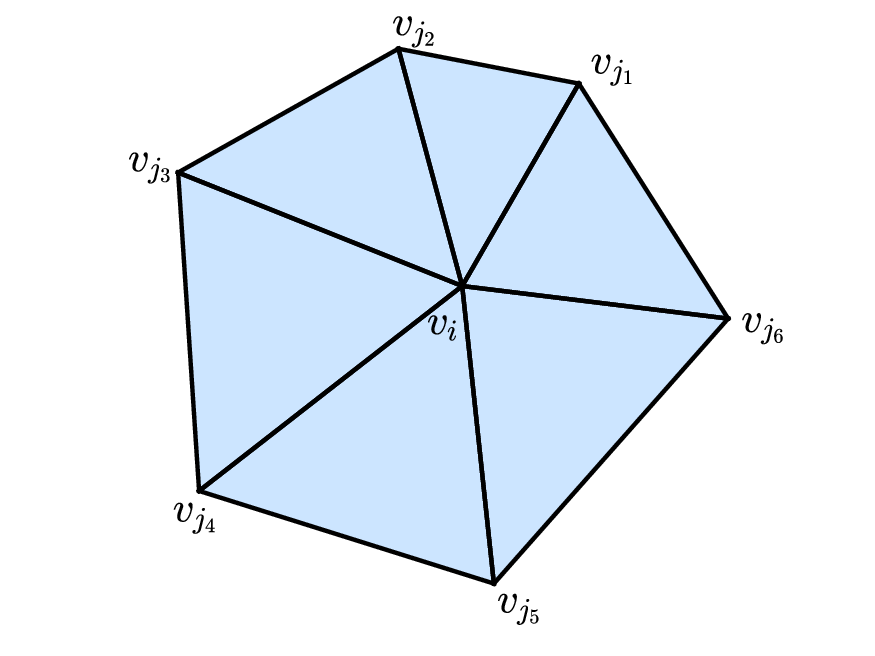}
}
\caption{Illustrations of opposite angles and adjacent vertices indices set. (a) $\alpha_{ij}$ and $\alpha_{ji}$ are the pair of opposite angles of edge $[v_i,v_j]$; $\alpha_{jk}$ and $\alpha_{k'j}$ are the opposite angles of edge $[v_j,v_k]$ in $T_{ijk}$ and $[v_{k'},v_j]$ in $T_{k'ji}$, respectively.
(b) $v_{j_1},\cdots,v_{j_6}$ are the adjacent vertices of $v_i$. Hence, $\mathcal{N}(i) = \{j_1,j_2,\cdots,j_6\}$.}
    \label{fig:OppositeAngleOneRing}
\end{figure}

From \eqref{def:CE} and \Cref{lemma:Af}, we have
\begin{align}
    &E_C(f) = \frac{1}{2} \langle D(f)\mathbf{f},\mathbf{f} \rangle, \quad \text{  with  } D(f) = L - L(f),  \label{eq:CE=D(f)}\\
    &\nabla_\mathbf{f} E_C(f) = D(f) \mathbf{f}. \label{eq:pEpf}
\end{align}
Clearly, $D(f)$ has the same sparsity structure as $L$ and $L(f)$, and the corresponding cotangent weight is $\big[D(f)\big]_{ij} := \tilde{w}_{ij} = w_{ij} - w_{ij}(f)$.

\begin{remark}
If $f^*$ is a conformal map, the inner angles of the triangles in $M$ are identical to those in $f^*(M)$. We have $w_{ij}(f^*) = w_{ij}$ and $D(f) = \mathbf{0}$, which leads to $E_C(f^*) = 0$.
\end{remark}

Let $\mathbf{f} = [\mathbf{x},\mathbf{y},\mathbf{z}]$ and $\vectx(\mathbf{f}) = [\mathbf{x}^\top,\mathbf{y}^\top,\mathbf{z}^\top]^\top$ be its vectorization. Since the vertices of $\mathbf{f}$ are on the sphere, we adopt the spherical coordinates to represent all vertices by
\begin{subequations}
    \begin{equation} \label{eq:xyz}
    \mathbf{x} = \cos\boldsymbol{\theta} \odot \sin{\boldsymbol{\phi}},\quad
    \mathbf{y} = \sin\boldsymbol{\theta} \odot \sin{\boldsymbol{\phi}},\quad
    \mathbf{z} = \cos{\boldsymbol{\phi}},
\end{equation}
and let
\begin{equation} \label{eq:uvw}
    \mathbf{u} = \cos\boldsymbol{\theta} \odot \cos{{\boldsymbol{\phi}}},\quad
    \mathbf{v} = \sin\boldsymbol{\theta} \odot \cos{{\boldsymbol{\phi}}},\quad
    \mathbf{w} = \sin{{\boldsymbol{\phi}}},
\end{equation}
\end{subequations}
where $\boldsymbol{\theta}, {{\boldsymbol{\phi}}} \in \mathbb{R}^{n}$ are the azimuth and elevation angle vectors of the corresponding vertices of $\mathbf{f}$, respectively, and $\odot$ is the Hadamard product.
Similar to the stereographic projection in \cite{MHTL19,PTKC15}, the spherical coordinate representation eliminates the unit-length constraint and transforms the Cartesian coordinate $[\mathbf{x},\mathbf{y},\mathbf{z}]$ into the spherical coordinate $[\boldsymbol{\theta},{{\boldsymbol{\phi}}}]$, reducing the input variable scale by $1/3$.
Furthermore, the spherical coordinate projects the whole sphere into a bounded region $[0,2\pi] \times [0,\pi]$, which avoids the computational error near the north pole \cite{PTKC15} or the equator \cite{MHTL19} by using the stereographic projection. In the further discussion, we denote that $\nabla := [\nabla_{\boldsymbol{\theta}}^\top, \nabla_{{\boldsymbol{\phi}}}^\top]^\top$ with respect to $(\boldsymbol{\theta},{{\boldsymbol{\phi}}})$ unless a special illustration in the rest of the paper. The gradient of any scalar $a$ and Jacobi matrix of any vector $\mathbf{a}$ with respect to $(\boldsymbol{\theta},{{\boldsymbol{\phi}}})$ are represented as
\[
\nabla a = \left[ \frac{\partial a}{\partial \boldsymbol{\theta}_1}, \cdots, \frac{\partial a}{\partial \boldsymbol{\theta}_n}, \frac{\partial a}{\partial {{\boldsymbol{\phi}}}_1}, \cdots, \frac{\partial a}{\partial {{\boldsymbol{\phi}}}_n} \right]^\top,\quad
\nabla \mathbf{a} = \left[ \frac{\partial \mathbf{a}}{\partial \boldsymbol{\theta}_1}, \cdots, \frac{\partial \mathbf{a}}{\partial \boldsymbol{\theta}_n}, \frac{\partial \mathbf{a}}{\partial {\boldsymbol{\phi}}_1}, \cdots, \frac{\partial \mathbf{a}}{\partial {\boldsymbol{\phi}}_n} \right],
\]
respectively. Furthermore, the gradient of $\mathbf{x},\mathbf{y},\mathbf{z},\mathbf{u},\mathbf{v},\mathbf{w}$ can be represented by themselves,
\begin{align} \label{eq:nablaxyzuvw}
    \begin{array}{l@{}l@{}l@{}l}
    &\nabla \mathbf{x} = [-\diag(\mathbf{y}),\diag(\mathbf{u})], \quad
    &\nabla \mathbf{y} = [\diag(\mathbf{x}),\diag(\mathbf{v})], \quad
    &\nabla \mathbf{z} = [\mathbf{0}_{n\times n},-\diag(\mathbf{w})],\\
    &\nabla \mathbf{u} = [-\diag(\mathbf{v}),-\diag(\mathbf{x})], \quad
    &\nabla \mathbf{v} = [\diag(\mathbf{u}),-\diag(\mathbf{y})], \quad
    &\nabla \mathbf{w} = [\mathbf{0}_{n\times n},\diag(\mathbf{z})].
    \end{array}
\end{align}


We now vectorize the gradient \eqref{eq:pEpf} and let
\begin{align} \label{eq:pqr}
\mathbf{p} = D(f)\mathbf{x},\quad \mathbf{q} = D(f)\mathbf{y},\quad \mathbf{r} = D(f)\mathbf{z}.
\end{align}
From \eqref{eq:CE=D(f)}, we have $E_C(f) = \frac{1}{2}(\mathbf{x}^\top \mathbf{p}+\mathbf{y}^\top \mathbf{q}+\mathbf{z}^\top \mathbf{r})$. Then, from \eqref{eq:pEpf} and \eqref{eq:nablaxyzuvw}, the gradient of $E_C(f)$ with respect to $(\boldsymbol{\theta},{\boldsymbol{\phi}})$ is
\begin{align} \label{eq:gradientE}
    \mathbf{g}:=\nabla E_C &= \nabla\vectx(\mathbf{f})^\top \nabla_{\vectx(\mathbf{f})} E_C\\
    &=\nabla \mathbf{x}^\top \mathbf{p} + \nabla \mathbf{y}^\top \mathbf{q} + \nabla \mathbf{z}^\top \mathbf{r}
    = \begin{bmatrix}
        -\diag(\mathbf{y}) \mathbf{p} +
        \diag(\mathbf{x}) \mathbf{q}\\
        \diag(\mathbf{u}) \mathbf{p} +
        \diag(\mathbf{v}) \mathbf{q} -
        \diag(\mathbf{w}) \mathbf{r}
    \end{bmatrix}
    \in \mathbb{R}^{2n\times 1}.
\end{align}
Moreover, the Hessian matrix is also obtained
\begin{align} \label{eq:HessianE0}
    H := \nabla\nabla E_C &= (\nabla \mathbf{x}^\top \nabla \mathbf{p} + \nabla \mathbf{y}^\top \nabla \mathbf{q} + \nabla \mathbf{z}^\top \nabla \mathbf{r}) \\
    &+\begin{bmatrix}
        -\diag(\mathbf{p}) \nabla \mathbf{y} + \diag(\mathbf{q}) \nabla \mathbf{x} \\
        \diag(\mathbf{p}) \nabla \mathbf{u} + \diag(\mathbf{q}) \nabla \mathbf{v} - \diag(\mathbf{r}) \nabla \mathbf{w}
    \end{bmatrix}\in \mathbb{R}^{2n\times 2n}, \nonumber
\end{align}
where
\begin{align*}
   \nabla \mathbf{p} = D(f)\nabla \mathbf{x} -\sum_{\ell = 1}^n \mathbf{x}_\ell\nabla(L(f)\mathbf{e}_\ell),\
   \nabla \mathbf{q} = D(f)\nabla \mathbf{y} -\sum_{\ell = 1}^n \mathbf{y}_\ell\nabla(L(f)\mathbf{e}_\ell),\
   \nabla \mathbf{r} = D(f)\nabla \mathbf{z} -\sum_{\ell = 1}^n \mathbf{z}_\ell\nabla(L(f)\mathbf{e}_\ell).
\end{align*}
Via the chain rule
\begin{align*}
    \nabla (L(f)\mathbf{e}_\ell) =& \nabla_\mathbf{x} (L(f)\mathbf{e}_\ell)\nabla \mathbf{x} + \nabla_\mathbf{y} (L(f)\mathbf{e}_\ell)\nabla \mathbf{y} + \nabla_\mathbf{z} (L(f)\mathbf{e}_\ell)\nabla \mathbf{z} \in \mathbb{R}^{n\times 2n},
\end{align*}
we have
\begin{equation} \label{eq:chain}
\resizebox{\textwidth}{!}{$
    \begin{bmatrix}
         \sum_{\ell = 1}^n \mathbf{x}_\ell\nabla(L(f)\mathbf{e}_\ell)\\
         \sum_{\ell = 1}^n \mathbf{y}_\ell\nabla(L(f)\mathbf{e}_\ell)\\
         \sum_{\ell = 1}^n \mathbf{z}_\ell\nabla(L(f)\mathbf{e}_\ell)
    \end{bmatrix}
    =
    \begin{bmatrix}
        \sum_{\ell = 1}^n \mathbf{x}_\ell\nabla_\mathbf{x}(L(f)\mathbf{e}_\ell) & \sum_{\ell = 1}^n \mathbf{x}_\ell\nabla_\mathbf{y}(L(f)\mathbf{e}_\ell) & \sum_{\ell = 1}^n \mathbf{x}_\ell\nabla_\mathbf{z}(L(f)\mathbf{e}_\ell)\\
        \sum_{\ell = 1}^n \mathbf{y}_\ell\nabla_\mathbf{x}(L(f)\mathbf{e}_\ell) & \sum_{\ell = 1}^n \mathbf{y}_\ell\nabla_\mathbf{y}(L(f)\mathbf{e}_\ell) & \sum_{\ell = 1}^n \mathbf{y}_\ell\nabla_\mathbf{z}(L(f)\mathbf{e}_\ell)\\
        \sum_{\ell = 1}^n \mathbf{z}_\ell\nabla_\mathbf{x}(L(f)\mathbf{e}_\ell) & \sum_{\ell = 1}^n \mathbf{z}_\ell\nabla_\mathbf{y}(L(f)\mathbf{e}_\ell) & \sum_{\ell = 1}^n \mathbf{z}_\ell\nabla_\mathbf{z}(L(f)\mathbf{e}_\ell)
    \end{bmatrix} \begin{bmatrix}
        \nabla \mathbf{x}\\ \nabla \mathbf{y}\\ \nabla \mathbf{z}
    \end{bmatrix}.
    $}
\end{equation}
Plugging \eqref{eq:chain} into \eqref{eq:HessianE0}, we can rewrite the Hessian matrix as
\begin{align}  \label{eq:HessianE}
    H &=
    \begin{bmatrix}
        \nabla \mathbf{x} \\ \nabla \mathbf{y} \\ \nabla \mathbf{z}
    \end{bmatrix}^\top
    (L_1 - L_2)
    \begin{bmatrix}
        \nabla \mathbf{x} \\ \nabla \mathbf{y} \\ \nabla \mathbf{z}
    \end{bmatrix}
    - \begin{bmatrix}
        \diag(\mathbf{p}\odot \mathbf{x} + \mathbf{q}\odot \mathbf{y}) &
        \diag(\mathbf{p}\odot \mathbf{v} - \mathbf{q}\odot \mathbf{u}) \\
        \diag(\mathbf{p}\odot \mathbf{v} - \mathbf{q}\odot \mathbf{u}) &
        \diag(\mathbf{p}\odot \mathbf{x} + \mathbf{q}\odot \mathbf{y} + \mathbf{r}\odot \mathbf{z})
    \end{bmatrix}\\
    &\equiv H_1 - H_2 - K, \label{eq:HessianE=2}
\end{align}
where
\begin{align*}
    H_s &=
    \left[
        \nabla \mathbf{x}^\top , \nabla \mathbf{y}^\top , \nabla \mathbf{z}^\top
    \right]
    L_s
    \begin{bmatrix}
        \nabla \mathbf{x} \\ \nabla \mathbf{y} \\ \nabla \mathbf{z}
    \end{bmatrix}, s = 1,2, \\
    K &= \begin{bmatrix}
        \diag(\mathbf{p}\odot \mathbf{x} + \mathbf{q}\odot \mathbf{y}) &
        \diag(\mathbf{p}\odot \mathbf{v} - \mathbf{q}\odot \mathbf{u}) \\
        \diag(\mathbf{p}\odot \mathbf{v} - \mathbf{q}\odot \mathbf{u}) &
        \diag(\mathbf{p}\odot \mathbf{x} + \mathbf{q}\odot \mathbf{y} + \mathbf{r}\odot \mathbf{z})
    \end{bmatrix}
\end{align*}
with
\begin{align} \label{eq:L1L2}
\resizebox{\textwidth}{!}{$
L_1 =
\begin{bmatrix}
        D(f) & & \\
        & D(f) & \\
        & & D(f)
    \end{bmatrix},\quad
L_2 = \begin{bmatrix}
        \sum_{\ell = 1}^n \mathbf{x}_\ell\nabla_\mathbf{x}(L(f)\mathbf{e}_\ell) & \sum_{\ell = 1}^n \mathbf{x}_\ell\nabla_\mathbf{y}(L(f)\mathbf{e}_\ell) & \sum_{\ell = 1}^n \mathbf{x}_\ell\nabla_\mathbf{z}(L(f)\mathbf{e}_\ell)\\
        \sum_{\ell = 1}^n \mathbf{y}_\ell\nabla_\mathbf{x}(L(f)\mathbf{e}_\ell) & \sum_{\ell = 1}^n \mathbf{y}_\ell\nabla_\mathbf{y}(L(f)\mathbf{e}_\ell) & \sum_{\ell = 1}^n \mathbf{y}_\ell\nabla_\mathbf{z}(L(f)\mathbf{e}_\ell)\\
        \sum_{\ell = 1}^n \mathbf{z}_\ell\nabla_\mathbf{x}(L(f)\mathbf{e}_\ell) & \sum_{\ell = 1}^n \mathbf{z}_\ell\nabla_\mathbf{y}(L(f)\mathbf{e}_\ell) & \sum_{\ell = 1}^n \mathbf{z}_\ell\nabla_\mathbf{z}(L(f)\mathbf{e}_\ell)
    \end{bmatrix}.
    $}
\end{align}

In the matrix $L_2$, the Jacobian matrix of each column of $L(f)$ should be considered. For the $\ell$-th column entries $L(f)\mathbf{e}_\ell$, it is clear that $\left[L(f)\right]_{i\ell} \neq 0$ if and only if $i \in \mathcal{N}(\ell)$ or $i=\ell$ by the definition of the Laplacian matrix in \eqref{def:L}. The nondiagonal entries are negative cotangent weights, and the diagonal entries are the sums of the cotangent weights. Hence, the Jacobian matrix of $L(f)\mathbf{e}_\ell$ is formed by the gradient of cotangent weights $w_{ij}(f)$, especially the gradient of cotangent functions $c_{ij}(f):=\cot\alpha_{ij}(f)$. Before discussing entries of $\nabla L\mathbf{e}_\ell$, we first give a lemma for the sparsity structure of the Jacobian block $\sum_{\ell = 1}^n \mathbf{a}_\ell\nabla_\mathbf{b} (L(f)\mathbf{e}_\ell)$, $\mathbf{a},\mathbf{b} = \mathbf{x},\mathbf{y},\mathbf{z}$.

\begin{lemma} \label{lemma:sparsity}
Each subblock $\sum_{\ell = 1}^n \mathbf{a}_\ell\nabla_\mathbf{b} (L(f)\mathbf{e}_\ell)$ of $L_2$ in \eqref{eq:L1L2} with $\mathbf{a},\mathbf{b} = \mathbf{x},\mathbf{y},\mathbf{z}$ is of identical sparsity structure to $L$.
\end{lemma}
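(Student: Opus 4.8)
The plan is to argue entrywise, tracking exactly which vertex coordinates each entry of $L(f)$ depends on. Write $S_{\mathbf{a}\mathbf{b}} := \sum_{\ell=1}^{n} \mathbf{a}_\ell\,\nabla_{\mathbf{b}}(L(f)\mathbf{e}_\ell)\in\mathbb{R}^{n\times n}$, so that its $(i,m)$-entry is $\big[S_{\mathbf{a}\mathbf{b}}\big]_{im}=\sum_{\ell=1}^{n}\mathbf{a}_\ell\,\frac{\partial [L(f)]_{i\ell}}{\partial \mathbf{b}_m}$. By the definition \eqref{def:L} of the Laplacian, $[L(f)]_{i\ell}=0$ unless $\ell=i$ or $[v_i,v_\ell]\in\mathcal{E}(M)$, so the sum over $\ell$ collapses to $\ell\in\{i\}\cup\mathcal{N}(i)$. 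Since the nonzero entries of row $i$ of $L$ are precisely those with column index in $\{i\}\cup\mathcal{N}(i)$, it suffices to show that for each such $\ell$ the partial derivative $\partial [L(f)]_{i\ell}/\partial\mathbf{b}_m$ vanishes whenever $m\notin\{i\}\cup\mathcal{N}(i)$.

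The core of the argument is a locality observation about the cotangent Laplacian. From the cotangent formula used in the proof of \Cref{lemma:Af}, each value $\cot\alpha_{ij}(f)$ is a function of the three images $\mathbf{f}_i,\mathbf{f}_j,\mathbf{f}_k$ of the triangle that contains the angle. Consequently, for $i\neq\ell$ with $[v_i,v_\ell]\in\mathcal{E}(M)$, the off-diagonal entry $[L(f)]_{i\ell}=-w_{i\ell}(f)$ is assembled from the two angles opposite the edge $[v_i,v_\ell]$, living in the triangles $[v_i,v_\ell,v_k]$ and $[v_i,v_\ell,v_{k'}]$, and hence depends only on $\mathbf{f}_i,\mathbf{f}_\ell,\mathbf{f}_k,\mathbf{f}_{k'}$; the diagonal entry $[L(f)]_{ii}=\sum_{k\in\mathcal{N}(i)}w_{ik}(f)$ is assembled from angles in triangles all incident to $v_i$, hence depends only on $\mathbf{f}_i$ and $\{\mathbf{f}_k:k\in\mathcal{N}(i)\}$.

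Combining the two steps finishes the proof. For $\ell=i$, $[L(f)]_{ii}$ depends only on coordinates indexed by $\{i\}\cup\mathcal{N}(i)$, so $\partial[L(f)]_{ii}/\partial\mathbf{b}_m=0$ unless $m\in\{i\}\cup\mathcal{N}(i)$. For $\ell\in\mathcal{N}(i)$, the two vertices $v_k,v_{k'}$ opposite the edge $[v_i,v_\ell]$ sit in triangles that contain $v_i$, so $k,k'\in\mathcal{N}(i)$; together with $\ell\in\mathcal{N}(i)$ this shows $[L(f)]_{i\ell}$ depends only on coordinates indexed by $\{i\}\cup\mathcal{N}(i)$, and again $\partial[L(f)]_{i\ell}/\partial\mathbf{b}_m=0$ unless $m\in\{i\}\cup\mathcal{N}(i)$. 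Therefore $\big[S_{\mathbf{a}\mathbf{b}}\big]_{im}$ can be nonzero only for $m\in\{i\}\cup\mathcal{N}(i)$, i.e.\ the nonzero pattern of $S_{\mathbf{a}\mathbf{b}}$ lies within that of $L$; the argument is identical for all nine choices $\mathbf{a},\mathbf{b}\in\{\mathbf{x},\mathbf{y},\mathbf{z}\}$ since $\mathbf{b}_m$ always ranges over the same vertex index set.

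The explicit gradients of the cotangent functions $c_{ij}(f)$ are not needed here (and are computed in the sequel anyway); the only genuine content is the ``closed one-ring'' bookkeeping. The one place I would be careful is the mild combinatorial fact that the vertex opposite an edge incident to $v_i$ is itself a neighbor of $v_i$ — this is exactly what keeps the support from leaking to second-ring vertices, and I do not anticipate any obstacle beyond stating it cleanly.
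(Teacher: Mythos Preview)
Your proof is correct and follows essentially the same approach as the paper: both arguments reduce to the locality of the cotangent weights, observe that the $\ell$-sum collapses to $\ell\in\{i\}\cup\mathcal{N}(i)$, and use the key combinatorial fact that the vertices $k,k'$ opposite an edge $[v_i,v_\ell]$ lie in $\mathcal{N}(i)$ to confine the support of row $i$ to the one-ring. The paper packages this via the sets $S(i,j)=\{k:[\nabla_\mathbf{b}(L(f)\mathbf{e}_j)]_{ik}\neq 0\}$ and shows $\bigcup_\ell S(i,\ell)=\mathcal{N}(i)\cup\{i\}$, but the substance is identical to your entrywise argument.
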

\begin{proof}
By the cotangent formula in \eqref{eq:pfV}, one can observe that the gradients with respect to $\mathbf{x},\mathbf{y},\mathbf{z}$ are the $1$st, $2$nd and $3$rd entries of that with respect to $\mathbf{f}$, respectively. Hence, $\sum_{\ell = 1}^n \mathbf{a}_\ell\nabla_\mathbf{b} (L(f)\mathbf{e}_\ell)$ has identical sparsity for all $\mathbf{a},\mathbf{b} = \mathbf{x},\mathbf{y},\mathbf{z}$.
Here, we denote the nonzero indices set of $\mathbf{e}_i^\top \nabla_\mathbf{b}(L(f)\mathbf{e}_j)$ as
\[
S(i,j) = \{k~|~\big[\nabla_\mathbf{b}(L(f)\mathbf{e}_j)\big]_{ik} \neq 0\}.
\]
Clearly, the nonzero indices set of $\mathbf{e}_i^\top \sum_{\ell = 1}^n \mathbf{a}_\ell\nabla_\mathbf{b} (L(f)\mathbf{e}_\ell)$ is $\bigcup_{\ell = 1}^n S(i,\ell)$.
\begin{itemize}
\item For $i \neq j$ and $[v_i,v_j] \notin \mathcal{E}(M)$, the entries are $0$, and therefore, the gradients are also $0$. Hence,
\begin{align} \label{eq:Sij0}
            S(i,j) = \varnothing.
        \end{align}
\item For $i \neq j$ and $[v_i,v_j] \in \mathcal{E}(M)$, by the definition of cotangent weight $w_{ij}(f) = \frac{1}{2}\big(c_{ij}(f) + c_{ji}(f)\big)$, we have $\mathbf{e}_i^\top \nabla_\mathbf{b} (L(f) \mathbf{e}_j) = -\nabla_\mathbf{b} w_{ij}(f) = -\frac{1}{2}\big(\nabla_\mathbf{b} c_{ij}(f) + \nabla_\mathbf{b} c_{ji}(f)\big)$. As shown in \Cref{subfig:OppositeAngle}, one can see that $w_{ij}(f)$ relates to only $4$ vertices $\mathbf{f}_i,\mathbf{f}_j,\mathbf{f}_k,\mathbf{f}_{k'}$. Therefore, the entries of $\nabla_\mathbf{b} w_{ij}(f)$ are $0$ except for the $i,j,k,k'$-th entries; that is,
\begin{align} \label{eq:Sij}
            S(i,j) = \{i,j,k,k'\} \subset \mathcal{N}(i)\cup \{i\}.
        \end{align}
\item For $i=j$, we have $\mathbf{e}_i^\top \nabla_\mathbf{b} (L(f) \mathbf{e}_i) = \nabla_\mathbf{b} \big[L(f)\big]_{ii} = \sum_{j\in \mathcal{N}(i)} \nabla_\mathbf{b} w_{ij}(f)$. Hence, we can easily verify that $\mathbf{e}_i^\top \nabla_\mathbf{b} (L_f \mathbf{e}_i)$ relates to the whole adjacent vertices of $\mathbf{f}_i$ from \Cref{subfig:OneRing}. It immediately follows that
\begin{align} \label{eq:Sjj}
            S(i,i) = \mathcal{N}(i)\cup \{i\}.
        \end{align}
\end{itemize}
Combining \eqref{eq:Sij0}, \eqref{eq:Sij} and \eqref{eq:Sjj}, we have
$
S(i,\ell) \subset \mathcal{N}(i)\cup \{i\} = S(i,i), \text{ for } \ell \neq i.
$
It follows that
$
\bigcup_{\ell = 1}^n S(i,\ell)
= S(i,i) = \mathcal{N}(i)\cup \{i\}.
$
Since the index set of nonzero entries of $\mathbf{e}_i^\top L$ is $\mathcal{N}(i)\cup \{i\}$, the lemma is obtained immediately.
\end{proof}

The \Cref{lemma:sparsity} demonstrates that $\big[\sum_{\ell = 1}^n \mathbf{a}_\ell\nabla_\mathbf{b} (L(f)\mathbf{e}_\ell)\big]_{ij} \neq 0$ if and only if $[v_i,v_j]\in\mathcal{E}(M)$ or $i = j$ for every $i,j$. As a result, $L_2$ in \eqref{eq:L1L2} is stacked by $9$ matrices with the same sparsity as $L$ in $3\times 3$ form. Additionally, one can further observe that $H$ in \eqref{eq:HessianE} is also stacked by $4$ matrices with the same sparsity as $L$ in $2\times 2$ form. We summarize this idea as the following theorem.
\begin{theorem} \label{thm:sparsity}
$H$ in \eqref{eq:HessianE} is of identical sparsity to
$\mathbf{1}_{2\times 2} \otimes L.$
\end{theorem}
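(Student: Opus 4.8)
The plan is to read the sparsity of $H$ directly off its decomposition $H=H_1-H_2-K$ in \eqref{eq:HessianE=2}, showing term by term that each summand has nonzero pattern contained in that of $\mathbf{1}_{2\times 2}\otimes L$; the reverse containment being evident for a generic triangulation, the two patterns then coincide.

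The one structural fact I would isolate first is that, by \eqref{eq:nablaxyzuvw}, each of $\nabla\mathbf{x}$, $\nabla\mathbf{y}$, $\nabla\mathbf{z}$ is a horizontal concatenation $[\diag(\cdot),\diag(\cdot)]$ of two $n\times n$ diagonal matrices. Hence, for any $n\times n$ matrix $M$ and any $\mathbf{a},\mathbf{b}\in\{\mathbf{x},\mathbf{y},\mathbf{z}\}$, the $2n\times 2n$ product $(\nabla\mathbf{a})^\top M (\nabla\mathbf{b})$ is a $2\times 2$ block matrix whose blocks are each of the form $\diag(\cdot)\,M\,\diag(\cdot)$. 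Left or right multiplication by a diagonal matrix cannot create a nonzero entry outside the pattern of $M$, so every block of $(\nabla\mathbf{a})^\top M (\nabla\mathbf{b})$ has pattern contained in that of $M$.

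Next I would apply this to $H_1$ and $H_2$. Writing $H_s=\sum_{\mathbf{a},\mathbf{b}}(\nabla\mathbf{a})^\top [L_s]_{\mathbf{a}\mathbf{b}}(\nabla\mathbf{b})$, where $[L_s]_{\mathbf{a}\mathbf{b}}$ is the $(\mathbf{a},\mathbf{b})$ sub-block of $L_s$ in \eqref{eq:L1L2}, it suffices to know that each such block has the sparsity of $L$: for $L_1$ this is the remark following \eqref{eq:pEpf} (that $D(f)$ shares the pattern of $L$), and for $L_2$ it is precisely \Cref{lemma:sparsity}. By the previous paragraph, each $2\times 2$ block of every term of $H_s$ — and therefore of $H_s$ itself after summing over the nine $(\mathbf{a},\mathbf{b})$ pairs — has pattern contained in that of $L$, i.e.\ $H_1,H_2$ have pattern inside $\mathbf{1}_{2\times 2}\otimes L$. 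Finally $K$ is a $2\times 2$ array of diagonal matrices, whose pattern is trivially inside $\mathbf{1}_{2\times 2}\otimes L$ since the diagonal of $L$ lies in that pattern. Adding the three contributions gives the inclusion, and for a generic mesh the diagonal cotangent-weight sums in $L$ are nonzero and no accidental cancellation occurs among $H_1,H_2,K$, so the two patterns are in fact identical.

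I do not expect a serious obstacle: the only delicate point is the bookkeeping of matching the threefold block structure of $[\nabla\mathbf{x};\nabla\mathbf{y};\nabla\mathbf{z}]$ and of $L_1,L_2$ against the twofold block structure produced by each $\nabla\mathbf{a}=[\diag(\cdot),\diag(\cdot)]$, together with being honest that the argument literally yields pattern containment, with equality in the generic case, which is all that is needed for the sparse assembly of $H$ in \Cref{sec:algorithm}.
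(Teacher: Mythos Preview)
Your proposal is correct and follows essentially the same route as the paper: the paper's proof is a one-liner invoking the representation \eqref{eq:HessianE} and \Cref{lemma:sparsity}, while the surrounding text makes the same observation you spell out, namely that the diagonal structure of the $\nabla\mathbf{x},\nabla\mathbf{y},\nabla\mathbf{z}$ blocks carries the $L$-sparsity of each $3\times 3$ sub-block of $L_1,L_2$ through to the $2\times 2$ blocks of $H$. Your added remark that the argument literally gives pattern containment, with equality only generically, is a fair caveat the paper elides.
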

\begin{proof}
By the Hessian matrix representation of $H$ in \eqref{eq:HessianE} and the \Cref{lemma:sparsity}, the proof is obtained.
\end{proof}

\Cref{thm:sparsity} demonstrates the high sparsity of the Hessian matrix. This property confirms the feasibility of practically solving the large-scale linear system
\begin{align} \label{eq:newtonstep}
    H\mathbf{s} = -\mathbf{g},
\end{align}
which inspires us to apply a Newton-type algorithm for solving the optimization problem \eqref{opt:conformal}. Furthermore, in the representation of $H$ in \eqref{eq:HessianE}, $L_1 - L_2 = \nabla_{\vectx(\mathbf{f})} \nabla_{\vectx(\mathbf{f})} E_C$ is the Hessian matrix of the conformal energy with respect to $\mathbf{x},\mathbf{y},\mathbf{z}$. The term $\big[ \nabla \mathbf{x}^\top,\nabla \mathbf{y}^\top,\nabla \mathbf{z}^\top \big]^\top$ in \eqref{eq:HessianE} is the Jacobian matrix of $\mathbf{x},\mathbf{y},\mathbf{z}$ with respect to $\boldsymbol{\theta}$ and $\boldsymbol{\phi}$, which is stacked by $3\times 2$ diagonal matrices. The matrix $K$ in \eqref{eq:HessianE} is also stacked by $2\times 2$ diagonal matrices. These structures are invariant for arbitrary parameterization, and therefore, the sparsity structure of Hessian matrix $H$ is also invariant. Hence, this property is also available on other parameterizations of closed surfaces, including other expressions of spheres and other target regions.

We proceed further to analyze the entries of Hessian matrix $H$.
The entries of $L_1$ are derived in \eqref{eq:pqr} and \eqref{eq:CE=D(f)}, and the entries of $K$ in \eqref{eq:HessianE} are derived by \eqref{eq:xyz}-\eqref{eq:uvw} and \eqref{eq:pqr}.
We now give the specific derivation for entries of $L_2$ in \eqref{eq:HessianE}. First, we give the gradients of
\[
c_{ij}(f) = -\frac{\langle \mathbf{f}_{ki},\mathbf{f}_{jk} \rangle}{2\left|f(T_{ijk})\right|},\quad
c_{jk}(f) = -\frac{\langle \mathbf{f}_{ij},\mathbf{f}_{ki} \rangle}{2\left|f(T_{ijk})\right|},\quad
c_{ki}(f) = -\frac{\langle \mathbf{f}_{jk},\mathbf{f}_{ij} \rangle}{2\left|f(T_{ijk})\right|}
\]
in the triangle $[\mathbf{f}_i,\mathbf{f}_j,\mathbf{f}_k]$, where $c_{ij}(f):= \cot\alpha_{ij}(f)$ as before. Since $c_{ij}(f), c_{jk}(f), c_{ki}(f)$ only relates to vertices $\mathbf{f}_i,\mathbf{f}_j,\mathbf{f}_k$, their gradients with respect to other vertices are $0$. The gradients with respect to $\mathbf{f}_j$ are calculated by
\begin{subequations}
\begin{align}
    \frac{\partial}{\partial \mathbf{f}_j} c_{ij}(f) &= \frac{1}{|f(T_{ijk})|} \left[ - \frac{\partial |f(T_{ijk})|}{\partial \mathbf{f}_j} c_{ij}(f) - \frac{ 1 }{2} \mathbf{f}_{ki} \right] \label{eq:pcijpj},\\
    \frac{\partial}{\partial \mathbf{f}_j} c_{jk}(f) &= \frac{1}{|f(T_{ijk})|} \left[ - \frac{\partial |f(T_{ijk})|}{\partial \mathbf{f}_j} c_{jk}(f) + \frac{ 1 }{2} \mathbf{f}_{ki} \right] \label{eq:pcjkpj},\\
    \frac{\partial}{\partial \mathbf{f}_j} c_{ki}(f) &= \frac{1}{|f(T_{ijk})|} \left[ - \frac{\partial |f(T_{ijk})|}{\partial \mathbf{f}_j} c_{ki}(f) + \frac{1}{2} (\mathbf{f}_{jk} - \mathbf{f}_{ij}) \right] ,\label{eq:pckipj}
\end{align}
\end{subequations}
with
\begin{subequations}
\begin{equation}
    \frac{\partial |f(T_{ijk})|}{\partial \mathbf{f}_i} =
    \frac{1}{2}\big(c_{ij}(f) \mathbf{f}_{ij} - c_{ki}(f) \mathbf{f}_{ki}\big), \label{eq:pApi}
\end{equation}
\begin{equation}
    \frac{\partial |f(T_{ijk})|}{\partial \mathbf{f}_j} =
    \frac{1}{2}\big(c_{jk}(f) \mathbf{f}_{jk} - c_{ij}(f) \mathbf{f}_{ij}\big), \label{eq:pApj}
\end{equation}
\begin{equation}
    \frac{\partial |f(T_{ijk})|}{\partial \mathbf{f}_k} =
    \frac{1}{2}\big(c_{ki}(f) \mathbf{f}_{ki} - c_{jk}(f) \mathbf{f}_{jk}\big). \label{eq:pApk}
\end{equation}
\end{subequations}
The others can be obtained by rotating the subscript $i,j,k$ in turn.

Now, we derive the entries of matrix $L_2$ in \eqref{eq:HessianE}. Let us consider the block $\sum_{\ell = 1}^n \mathbf{a}_\ell \nabla_\mathbf{b} (L(f)\mathbf{e}_\ell)$ for $\mathbf{a},\mathbf{b} = \mathbf{x},\mathbf{y},\mathbf{z}$, the $(i,j)$-th entry of which is
\begin{align*}
    \left[\sum_{\ell = 1}^n \mathbf{a}_\ell\nabla_\mathbf{b} (L(f)\mathbf{e}_\ell)\right]_{ij} &=
    \sum_{\ell = 1}^n \mathbf{a}_\ell \frac{\partial}{\partial \mathbf{b}_j} \big[L(f)\big]_{i\ell}
    = \mathbf{a}_i \frac{\partial}{\partial \mathbf{b}_j}  \big[L(f)\big]_{ii} + \sum_{\ell \in \mathcal{N}(i)} \mathbf{a}_\ell \frac{\partial}{\partial \mathbf{b}_j}  \big[L(f)\big]_{i\ell}\\
    &= \mathbf{a}_i \frac{\partial}{\partial \mathbf{b}_j}  \sum_{\ell\in \mathcal{N}(i)} w_{i\ell}(f) - \sum_{\ell \in \mathcal{N}(i)} \mathbf{a}_\ell \frac{\partial}{\partial \mathbf{b}_j} w_{i\ell}(f)\\
    &= \sum_{\ell \in \mathcal{N}(i)} \mathbf{a}_{i\ell} \frac{\partial}{\partial \mathbf{b}_j}  w_{i\ell}(f).
\end{align*}

(i) For $j \neq i$, there are only three vertices $v_j,v_k,v_{k'}$ related to $v_j$ among all adjacent vertices of $v_i$, as shown in \Cref{fig:OppositeAngleOneRing}. Therefore,
\begin{align}
&\left[\sum_{\ell = 1}^n \mathbf{a}_\ell\nabla_\mathbf{b} (L(f)\mathbf{e}_\ell)\right]_{ij} = \sum_{\ell \in \{j,k,k'\}} \mathbf{a}_{i\ell} \frac{\partial}{\partial \mathbf{b}_j} w_{i\ell}(f)\nonumber\\
=& \mathbf{a}_{ik} \frac{\partial}{\partial \mathbf{b}_j} w_{ik}(f) +
\mathbf{a}_{ik'} \frac{\partial}{\partial \mathbf{b}_j} w_{ik'}(f) +
\mathbf{a}_{ij} \frac{\partial}{\partial \mathbf{b}_j} w_{ij}(f)\nonumber\\
=& \frac{1}{2} \left[\left(
\mathbf{a}_{ij} \frac{\partial}{\partial \mathbf{b}_j} c_{ij}(f) -
\mathbf{a}_{ki} \frac{\partial}{\partial \mathbf{b}_j} c_{ki}(f) \right) +
\left(
\mathbf{a}_{ik'} \frac{\partial}{\partial \mathbf{b}_j} c_{ik'}(f) -
\mathbf{a}_{ji} \frac{\partial}{\partial \mathbf{b}_j} c_{ji}(f)
\right)\right].\label{eq:nxLyij}
\end{align}
One can see that the terms in the first bracket relate only to triangle $T_{ijk}$, while the terms in the second bracket relate to triangle $T_{k'ji}$, which are of the same form. This characteristic is also similar to $L$, whose cotangent weight is $w_{ij} = \frac{1}{2}(c_{ij}+c_{ji})$ with $c_{ij},c_{ji}$ also relating to $T_{ijk}$ and $T_{k'ji}$, respectively. Hence, we only give the specific representation of the first bracket of \eqref{eq:nxLyij}. The second bracket is obtained similarly.
By \eqref{eq:pcijpj}-\eqref{eq:pckipj}, we have
\begin{align}
&\mathbf{a}_{ij} \frac{\partial}{\partial \mathbf{b}_j} c_{ij}(f) +
\mathbf{a}_{ik} \frac{\partial}{\partial \mathbf{b}_j} c_{ki}(f)\nonumber\\
=& -\frac{1}{2|f(T_{ijk})|} \left[
\frac{2\partial |f(T_{ijk})|}{\partial \mathbf{a}_i} \frac{2\partial |f(T_{ijk})|}{\partial \mathbf{b}_j} + (2\mathbf{a}_{ki}\mathbf{b}_{jk} - \mathbf{a}_{jk}\mathbf{b}_{ki})
 \right] . \label{eq:bajni}
\end{align}

(ii) For $j = i$, the related vertices are the whole adjacent vertices of $v_i$ and itself, i.e., $\mathcal{N}(i)\cup \{i\}$. Hence,
\begin{align}
&\left[\sum_{\ell = 1}^n \mathbf{b}_\ell\nabla_\mathbf{a} (L(f)\mathbf{e}_\ell)\right]_{ii}
= \sum_{\ell \in \mathcal{N}(i)} \mathbf{a}_{i\ell} \frac{\partial}{\partial \mathbf{b}_i} w_{i\ell}(f)\nonumber\\
=& \frac{1}{2}\sum_{\{j,k\}\in S_{\mathcal{E}}(i)} \left( \mathbf{a}_{ij} \frac{\partial}{\partial \mathbf{b}_i} c_{ij}(f) -
\mathbf{a}_{ki} \frac{\partial}{\partial \mathbf{b}_i} c_{ki}(f) \right),\nonumber 
\end{align}
where $S_{\mathcal{E}}(i) = \big\{\{j,k\}~|~j,k\in \mathcal{N}(i), [v_j,v_k] \in \mathcal{E}(M)\big\}$.
Similarly, via the gradient formulas \eqref{eq:pcijpj}-\eqref{eq:pckipj}, we have
\begin{align}
&\mathbf{a}_{ij} \frac{\partial}{\partial \mathbf{b}_i} c_{ij}(f) -
\mathbf{a}_{ki} \frac{\partial}{\partial \mathbf{b}_i} c_{ki}(f)\nonumber\\
=& -\frac{1}{2|f(T_{ijk})|} \left[
\frac{2\partial |f(T_{ijk})|}{\partial \mathbf{a}_i} \frac{2\partial |f(T_{ijk})|}{\partial \mathbf{b}_i} + \mathbf{a}_{jk}\mathbf{b}_{jk}
 \right] . \label{eq:bajei}
\end{align}

We now have derived the entries of matrix $L_1 - L_2$, which is the Hessian matrix of the conformal energy with respect to Cartesian coordinates $[\mathbf{x},\mathbf{y},\mathbf{z}]$. The following theorem shows that the nullity of $L_1 - L_2$ is $3$.

\begin{theorem}
$L_1 - L_2$ is a symmetric Laplacian matrix with its null space having orthogonal basis $[\mathbf{1}^\top,\mathbf{0}^\top,\mathbf{0}^\top]^\top$, $[\mathbf{0}^\top,\mathbf{1}^\top,\mathbf{0}^\top]^\top$ and $[\mathbf{0}^\top,\mathbf{0}^\top,\mathbf{1}^\top]^\top$.
\end{theorem}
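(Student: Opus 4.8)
The plan is to work from the identification, recorded just above, of $L_1-L_2$ with the Cartesian Hessian $\nabla_{\vectx(\mathbf{f})}\nabla_{\vectx(\mathbf{f})}E_C$, and to establish the three assertions of the theorem — symmetry, the block Laplacian structure, and the precise description of the null space — in that order. Symmetry is immediate, since $L_1-L_2$ is the Hessian of the twice continuously differentiable scalar field $E_C$; alternatively one reads it off from \eqref{eq:bajni} and \eqref{eq:bajei} by checking that interchanging the row index $(i,\mathbf{a})$ with the column index $(j,\mathbf{b})$ leaves every entry unchanged.

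For the Laplacian structure I would combine \Cref{lemma:sparsity} with a row-sum computation. \Cref{lemma:sparsity} already gives that every $n\times n$ sub-block of $L_1$ and of $L_2$ inherits the sparsity pattern of $L$, so it only remains to see that each such sub-block has vanishing row sums, hence — by the symmetry just established — vanishing column sums as well. For $L_1=\mathrm{blkdiag}(D(f),D(f),D(f))$ this is clear, because $D(f)=L-L(f)$ has zero row sums. For a sub-block $\sum_{\ell}\mathbf{a}_\ell\nabla_\mathbf{b}(L(f)\mathbf{e}_\ell)$ of $L_2$, one uses the intermediate identity $\big[\sum_{\ell}\mathbf{a}_\ell\nabla_\mathbf{b}(L(f)\mathbf{e}_\ell)\big]_{ij}=\sum_{\ell\in\mathcal{N}(i)}\mathbf{a}_{i\ell}\,\partial w_{i\ell}(f)/\partial\mathbf{b}_j$ derived above: summing over $j$ gives $\sum_{\ell\in\mathcal{N}(i)}\mathbf{a}_{i\ell}\sum_{j}\partial w_{i\ell}(f)/\partial\mathbf{b}_j$, and $\sum_{j}\partial w_{i\ell}(f)/\partial\mathbf{b}_j$ is the derivative of the cotangent weight $w_{i\ell}(f)$ under a common shift of the $\mathbf{b}$-coordinate of all vertices, i.e.\ under a rigid translation of $f(M)$; this vanishes because each $w_{i\ell}(f)$ depends on the vertices only through their differences. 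Thus every $3\times3$ block of $L_1-L_2$ is an $L$-sparse matrix with zero row and column sums, which is exactly what it means for $L_1-L_2$ to be a Laplacian matrix here.

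The inclusion $\mathrm{span}\{[\mathbf{1}^\top,\mathbf{0}^\top,\mathbf{0}^\top]^\top,[\mathbf{0}^\top,\mathbf{1}^\top,\mathbf{0}^\top]^\top,[\mathbf{0}^\top,\mathbf{0}^\top,\mathbf{1}^\top]^\top\}\subseteq\ker(L_1-L_2)$ then follows immediately: the three vectors are obviously pairwise orthogonal, and multiplying any one of them by $L_1-L_2$ merely reads off the block-row sums, which are zero by the previous step. Conceptually, this is just the translation invariance of $E_C=\tfrac12\langle L\mathbf{f},\mathbf{f}\rangle-A(f)$ — both terms depend on $\mathbf{f}$ only through the edge vectors $\mathbf{f}_{ij}$ — whose Hessian therefore annihilates the three translation directions. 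Hence $\dim\ker(L_1-L_2)\ge 3$.

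The reverse bound $\dim\ker(L_1-L_2)\le 3$ is the crux, and I expect essentially all of the work to lie here. The plan is to prove that $L_1-L_2$ is nonsingular on the orthogonal complement $V=\{\mathbf{u}:\mathbf{1}^\top\mathbf{u}^{\mathbf{x}}=\mathbf{1}^\top\mathbf{u}^{\mathbf{y}}=\mathbf{1}^\top\mathbf{u}^{\mathbf{z}}=0\}$, using the per-triangle decomposition $L_1-L_2=\sum_{T_{ijk}\in\mathcal{F}(M)}\nabla_{\vectx(\mathbf{f})}\nabla_{\vectx(\mathbf{f})}\big(E_C|_{T_{ijk}}\big)$, where $E_C|_{T_{ijk}}=\tfrac14\big(\cot\alpha_{jk}\|\mathbf{f}_{jk}\|^2+\cot\alpha_{ki}\|\mathbf{f}_{ki}\|^2+\cot\alpha_{ij}\|\mathbf{f}_{ij}\|^2\big)-|f(T_{ijk})|\ge 0$ is the contribution of the triangle $T_{ijk}$ to $E_C$ (nonnegative by Hutchinson's inequality applied to that triangle), together with the fact — obtained by a direct computation on one non-degenerate triangle — that the Hessian of $|f(T_{ijk})|$, regarded as a function of $(\mathbf{f}_i,\mathbf{f}_j,\mathbf{f}_k)\in\mathbb{R}^{9}$, has kernel exactly the three-dimensional space of simultaneous translations of $\mathbf{f}_i,\mathbf{f}_j,\mathbf{f}_k$. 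One then has to propagate the identities contained in $(L_1-L_2)\mathbf{u}=0$ across shared edges, using the connectedness of $M$, to force $\mathbf{u}$ to be a global translation. The delicate point is that $\nabla_{\vectx(\mathbf{f})}\nabla_{\vectx(\mathbf{f})}(E_C|_{T_{ijk}})$ is in general indefinite — the per-triangle conformal energy behaves like $\|J\|_F^2-2\det J$ in the local Jacobian $J$, so its kernel cannot simply be read off from a positivity argument — so this gluing/rank step needs care, and it presumably relies on the standing non-degeneracy of $f(M)$ (no zero-area image triangle, so that all the weights $w_{ij}(f)$ and areas $|f(T_{ijk})|$ occurring above are finite). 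That gluing argument is the main obstacle I anticipate.
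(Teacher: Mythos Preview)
Your treatment of symmetry and of the inclusion $\mathrm{span}\{[\mathbf{1}^\top,\mathbf{0}^\top,\mathbf{0}^\top]^\top,\ldots\}\subseteq\ker(L_1-L_2)$ is essentially the paper's argument: the paper also invokes that $L_1-L_2$ is the Cartesian Hessian of $E_C$ for symmetry, and obtains the vanishing row and column sums from the translation invariance of the cotangents, written there as $\big(\tfrac{\partial}{\partial\mathbf{f}_i}+\tfrac{\partial}{\partial\mathbf{f}_j}+\tfrac{\partial}{\partial\mathbf{f}_k}\big)c_{ij}(f)=0$ together with $\mathbf{1}^\top\nabla_\mathbf{b}(L(f)\mathbf{e}_\ell)=\nabla_\mathbf{b}(\mathbf{1}^\top L(f)\mathbf{e}_\ell)=0$. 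Your phrasing in terms of a rigid translation of $f(M)$ is the same idea.

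Where you diverge is the reverse bound $\dim\ker(L_1-L_2)\le 3$. The paper's proof does \emph{not} establish this: it stops after showing the row and column sums vanish and simply asserts that this ``guarantees the $3$ orthogonal basis of the null space,'' i.e.\ it only proves $\dim\ker\ge 3$. You are therefore attempting strictly more than the paper proves. Your per-triangle decomposition and edge-gluing sketch is a reasonable direction, and your caution that the per-triangle Hessian of $E_C|_{T_{ijk}}$ is indefinite (so no naive sum-of-PSD argument is available) is well placed; but as you note, the propagation step is not yet an argument, and it is precisely the missing content. If your goal is to match the paper, you can omit this part entirely; if your goal is a complete proof of the stated theorem, be aware that the paper leaves this direction open.
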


\begin{proof}
Since $L_1 - L_2$ is the Hessian matrix of the conformal energy with respect to Cartesian coordinates $[\mathbf{x},\mathbf{y},\mathbf{z}]$, it is obviously symmetric. By the gradient formulas \eqref{eq:pcijpj}-\eqref{eq:pckipj}, it is easily seen that
\begin{align*}
        \left(\frac{\partial }{\partial \mathbf{f}_i} + \frac{\partial }{\partial \mathbf{f}_j} + \frac{\partial }{\partial \mathbf{f}_k}\right)c_{ij}(f) = 0.
    \end{align*}
Therefore, by using the representation according to the adjacent vertices, we have
\begin{align*}
    \sum_{j = 1}^n \left[\sum_{\ell = 1}^n \mathbf{a}_\ell\nabla_\mathbf{b} (L(f)\mathbf{e}_\ell)\right]_{ij}
    &= \frac{1}{2}\sum_{j\in \mathcal{N}(i)} \mathbf{a}_{ij} \left( \frac{\partial}{\partial \mathbf{b}_i} + \frac{\partial}{\partial \mathbf{b}_j} + \frac{\partial}{\partial \mathbf{b}_k} \right) c_{ij}(f) \\
    &- \frac{1}{2}\sum_{j\in \mathcal{N}(i)} \mathbf{a}_{ji} \left( \frac{\partial}{\partial \mathbf{b}_i} + \frac{\partial}{\partial \mathbf{b}_j} + \frac{\partial}{\partial \mathbf{b}_{k'}} \right) c_{ji}(f)
    = 0, ~ i = 1,2,\cdots,n.
    \end{align*}
Additionally, by the equality $\mathbf{1}^\top(\nabla_\mathbf{b} L(f)\mathbf{e}_{\ell}) = \nabla_\mathbf{b} (\mathbf{1}^\top L(f)\mathbf{e}_{\ell}) = 0$, we also have
\[
    \sum_{i = 1}^n \left[\sum_{\ell = 1}^n \mathbf{a}_\ell\nabla_\mathbf{b} (L(f)\mathbf{e}_\ell)\right]_{ij}
    =
    \left[\sum_{\ell = 1}^n \mathbf{a}_\ell\nabla_\mathbf{b} (\mathbf{1}^\top L(f)\mathbf{e}_\ell)\right] \mathbf{e}_j
    = 0, ~ j = 1,2,\cdots,n.
    \]
We conclude that the sum of each row and column of $\sum_{\ell = 1}^n \mathbf{a}_\ell\nabla_\mathbf{b} (L(f)\mathbf{e}_\ell)$ are zero, which guarantees the $3$ orthogonal basis of the null space.
\end{proof}



Finally, we focus on
\begin{align*}
    H_2 := \left[
        \nabla \mathbf{x}^\top , \nabla \mathbf{y}^\top , \nabla \mathbf{z}^\top
    \right]
    L_2
    \begin{bmatrix}
        \nabla \mathbf{x} \\ \nabla \mathbf{y} \\ \nabla \mathbf{z}
    \end{bmatrix}
    = \begin{bmatrix}
        H_{2,\boldsymbol{\theta}\boldsymbol{\theta}} &
        H_{2,\boldsymbol{\theta}\boldsymbol{\phi}} \\
        H_{2,\boldsymbol{\phi}\boldsymbol{\theta}} &
        H_{2,\boldsymbol{\phi}\boldsymbol{\phi}}
    \end{bmatrix}
\end{align*}
defined in \eqref{eq:HessianE} and \eqref{eq:HessianE=2}.
The formulas \eqref{eq:bajni} and \eqref{eq:bajei} show that the entries of $L_2$ can be rewritten as the inner products of two terms related to $\mathbf{a}$ and $\mathbf{b}$, respectively. Taking \eqref{eq:bajei} as an example, we have
\begin{align*}
    \mathbf{a}_{ij} \frac{\partial}{\partial \mathbf{b}_i} c_{ij}(f) +
\mathbf{a}_{ik} \frac{\partial}{\partial \mathbf{b}_i} c_{ki}(f)
= -\frac{1}{2|f(T_{ijk})|} \left\langle \left[
    \frac{2\partial |f(T_{ijk})|}{\partial \mathbf{a}_i}, \mathbf{a}_{jk}
    \right], \left[
    \frac{2\partial |f(T_{ijk})|}{\partial \mathbf{b}_i}, \mathbf{b}_{jk}
    \right] \right\rangle,
\end{align*}
From gradient formulas \eqref{eq:pApi}-\eqref{eq:pApk}, we can find that the first and second terms are only associated with $\mathbf{a}$ and $\mathbf{b}$, respectively. Additionally, \eqref{eq:bajni} can also be written as a similar inner product form. Then, since $\mathbf{f}_i$ depends only on $(\boldsymbol{\theta}_i,{\boldsymbol{\phi}}_i)$ for $i = 1,2,\cdots,n$, the Jacobian matrices of $\mathbf{x},\mathbf{y},\mathbf{z}$ with respect to $\boldsymbol{\theta},{\boldsymbol{\phi}}$ are diagonal. It is easy to express the products of Jacobian matrices and $L_2$. Without loss of generality, we discuss only entry $\big[H_{2,\boldsymbol{\theta}\boldsymbol{\phi}}\big]_{ii}$,

\begin{align*}
    \big[H_{2,\boldsymbol{\theta}\boldsymbol{\phi}}\big]_{ii} =& \sum_{\{j,k\}\in S_{\mathcal{E}}(i)} \sum_{\mathbf{a},\mathbf{b} = \mathbf{x},\mathbf{y},\mathbf{z}} \left( \mathbf{a}_{ij} \frac{\partial}{\partial \mathbf{b}_i} c_{ij}(f) -
    \mathbf{a}_{ki} \frac{\partial}{\partial \mathbf{b}_i} c_{ki}(f) \right) \frac{\partial \mathbf{a}_i}{\partial \boldsymbol{\theta}_i} \frac{\partial \mathbf{b}_i}{\partial {\boldsymbol{\phi}}_i}\\
    =& -\sum_{\{j,k\}\in S_{\mathcal{E}}(i)} \frac{1}{2|f(T_{ijk})|} \sum_{\mathbf{a},\mathbf{b} = \mathbf{x},\mathbf{y},\mathbf{z}}  \left\langle \frac{\partial \mathbf{a}_i}{\partial \boldsymbol{\theta}_i} \left[
    \frac{2\partial |f(T_{ijk})|}{\partial \mathbf{a}_i}, \mathbf{a}_{jk}
    \right], \frac{\partial \mathbf{b}_i}{\partial {\boldsymbol{\phi}}_i} \left[
    \frac{2\partial |f(T_{ijk})|}{\partial \mathbf{b}_i}, \mathbf{b}_{jk}
    \right] \right\rangle\\
    =& -\sum_{\{j,k\}\in S_{\mathcal{E}}(i)} \frac{1}{2|f(T_{ijk})|} \left\langle \sum_{\mathbf{a} = \mathbf{x},\mathbf{y},\mathbf{z}}\frac{\partial \mathbf{a}_i}{\partial \boldsymbol{\theta}_i} \left[
    \frac{2\partial |f(T_{ijk})|}{\partial \mathbf{a}_i}, \mathbf{a}_{jk}
    \right], \sum_{\mathbf{b} = \mathbf{x},\mathbf{y},\mathbf{z}} \frac{\partial \mathbf{b}_i}{\partial {\boldsymbol{\phi}}_i} \left[
    \frac{2\partial |f(T_{ijk})|}{\partial \mathbf{b}_i}, \mathbf{b}_{jk}
    \right] \right\rangle\\
    =& -\sum_{\{j,k\}\in S_{\mathcal{E}}(i)} \frac{1}{2|f(T_{ijk})|} \left\langle \frac{\partial\mathbf{f}_i}{\partial\boldsymbol{\theta}_i}^\top \left[
    \frac{2\partial |f(T_{ijk})|}{\partial \mathbf{f}_i}, \mathbf{f}_{jk}
    \right], \frac{\partial\mathbf{f}_i}{\partial{\boldsymbol{\phi}}_i}^\top \left[
    \frac{2\partial |f(T_{ijk})|}{\partial \mathbf{f}_i}, \mathbf{f}_{jk}
    \right] \right\rangle.
\end{align*}

The partial differential terms in \eqref{eq:pApi}-\eqref{eq:pApk} show that they are the linear combination of $\mathbf{f}$. Hence, only the sum with respect to $\mathbf{x},\mathbf{y},\mathbf{z}$ is necessary, such as
\begin{subequations} \label{eq:abi}
\begin{equation} \label{eq:abi_jk}
    \begin{bmatrix}
        \mathbf{a}^i_{jk} \\ \mathbf{b}^i_{jk}
    \end{bmatrix}
    :=
    \left(\frac{\partial\mathbf{f}_i}{\partial(\boldsymbol{\theta}_i,{\boldsymbol{\phi}}_i)}\right)^\top \mathbf{f}_{jk}
    = \left[ \begin{aligned}
        -&\mathbf{y}_i\mathbf{x}_{jk} + \mathbf{x}_i\mathbf{y}_{jk} \\
         &\mathbf{u}_i\mathbf{x}_{jk} + \mathbf{v}_i\mathbf{y}_{jk} - \mathbf{w}_i\mathbf{z}_{jk}
    \end{aligned} \right],
    \end{equation}
    \begin{equation}\label{eq:abi_ki}
    \begin{bmatrix}
        \mathbf{a}^i_{ki} \\ \mathbf{b}^i_{ki}
    \end{bmatrix}
    :=
    \left(\frac{\partial\mathbf{f}_i}{\partial(\boldsymbol{\theta}_i,{\boldsymbol{\phi}}_i)}\right)^\top \mathbf{f}_{ki}
    = \left[ \begin{aligned}
        -&\mathbf{y}_i\mathbf{x}_{ki} + \mathbf{x}_i\mathbf{y}_{ki} \\
         &\mathbf{u}_i\mathbf{x}_{ki} + \mathbf{v}_i\mathbf{y}_{ki} - \mathbf{w}_i\mathbf{z}_{ki}
    \end{aligned} \right],
    \end{equation}
    \begin{equation} \label{eq:abi_ij}
    \begin{bmatrix}
        \mathbf{a}^i_{ij} \\ \mathbf{b}^i_{ij}
    \end{bmatrix}
    :=
    \left(\frac{\partial\mathbf{f}_i}{\partial(\boldsymbol{\theta}_i,{\boldsymbol{\phi}}_i)}\right)^\top \mathbf{f}_{ij}
    = \left[ \begin{aligned}
        -&\mathbf{y}_i\mathbf{x}_{ij} + \mathbf{x}_i\mathbf{y}_{ij} \\
         &\mathbf{u}_i\mathbf{x}_{ij} + \mathbf{v}_i\mathbf{y}_{ij} - \mathbf{w}_i\mathbf{z}_{ij}
    \end{aligned} \right].
\end{equation}
\end{subequations}
The others are defined similarly by modifying the superscripts and subscripts. Here, $\mathbf{a}^i_{\cdot\cdot}$ denotes the $\partial\boldsymbol{\theta}_i$ term, and $\mathbf{b}^i_{\cdot\cdot}$ denotes the $\partial{\boldsymbol{\phi}}_i$ term.
Thus, we have
\begin{align*}
    \frac{\partial\mathbf{f}_i}{\partial\boldsymbol{\theta}_i}^\top \left[
    \frac{2\partial |f(T_{ijk})|}{\partial \mathbf{f}_i}, \mathbf{f}_{jk}
    \right]
    = \frac{\partial\mathbf{f}_i}{\partial\boldsymbol{\theta}_i}^\top
    &\big[
    c_{ij}(f)\mathbf{f}_{ij} - c_{ki}(f)\mathbf{f}_{ki}, \mathbf{f}_{jk}
    \big]\\
    = &\big[
    c_{ij}(f)\mathbf{a}_{ij}^i - c_{ki}(f)\mathbf{a}_{ki}^i, \mathbf{a}_{jk}^i
    \big]
    .
\end{align*}
We can see that they differ only in $\mathbf{f}_{\cdot\cdot}$ and $\mathbf{a}_{\cdot\cdot}^i,\mathbf{b}_{\cdot\cdot}^i$. Hence, the entries of $H_2$ differ from only those of $L_2$ with the notations
$\mathbf{x}_{\cdot\cdot},\mathbf{y}_{\cdot\cdot},\mathbf{z}_{\cdot\cdot}$
in \eqref{eq:bajni} and \eqref{eq:bajei} replaced by $\mathbf{a}_{\cdot\cdot}^i$ and $\mathbf{b}_{\cdot\cdot}^i$.
Taking $H_{2,\boldsymbol{\theta}\boldsymbol{\phi}}$ as an example, the diagonal and nondiagonal entries are
\begin{align*}
\big[H_{2,\boldsymbol{\theta}\boldsymbol{\phi}}\big]_{ii} =
& -\sum_{\{j,k\}\in S_{\mathcal{E}}(i)} \frac{1}{4|f(T_{ijk})|} \left[
\frac{2\partial |f(T_{ijk})|}{\partial \boldsymbol{\theta}_i} \frac{2\partial |f(T_{ijk})|}{\partial {\boldsymbol{\phi}}_i} + \mathbf{a}^i_{jk}\mathbf{b}^i_{jk}
 \right], \\
\big[H_{2,\boldsymbol{\theta}\boldsymbol{\phi}}\big]_{ij} =
&-\frac{1}{4|f(T_{ijk})|} \left[
\frac{2\partial |f(T_{ijk})|}{\partial \boldsymbol{\theta}_i} \frac{2\partial |f(T_{ijk})|}{\partial {\boldsymbol{\phi}}_j} + (2\mathbf{a}_{ki}^i\mathbf{b}_{jk}^j - \mathbf{a}_{jk}^i\mathbf{b}_{ki}^j)
 \right] \nonumber \\
 &-\frac{1}{4|f(T_{k'ji})|} \left[
\frac{2\partial |f(T_{k'ji})|}{\partial \boldsymbol{\theta}_i} \frac{2\partial |f(T_{k'ji})|}{\partial {\boldsymbol{\phi}}_j} + (2\mathbf{a}_{ik'}^i\mathbf{b}_{k'j}^j - \mathbf{a}_{k'j}^i\mathbf{b}_{ik'}^j)
 \right], 
\end{align*}
respectively, where
\begin{subequations}
\begin{equation}
    \frac{\partial |f(T_{ijk})|}{\partial \boldsymbol{\theta}_i} =
    \frac{1}{2}\big(c_{ij}(f) \mathbf{a}^i_{ij} - c_{ki}(f) \mathbf{a}^i_{ki}\big), \label{eq:pApthetai}
\end{equation}
\begin{equation}
    \frac{\partial |f(T_{ijk})|}{\partial {\boldsymbol{\phi}}_i} =
    \frac{1}{2}\big(c_{ij}(f) \mathbf{b}^i_{ij} - c_{ki}(f) \mathbf{b}^i_{ki}\big). \label{eq:pApphii}
\end{equation}
\end{subequations}
The other entries are obtained similarly by replacing $\partial \mathbf{x}_i$, $\partial \mathbf{y}_i$, $\partial \mathbf{z}_i$ and $\mathbf{x}_{\cdot\cdot}$, $\mathbf{y}_{\cdot\cdot}$, $\mathbf{z}_{\cdot\cdot}$ by $\partial \boldsymbol{\theta}_i,\partial {\boldsymbol{\phi}}_i$ and $\mathbf{a}_{\cdot\cdot}^i,\mathbf{b}_{\cdot\cdot}^i$ as in \eqref{eq:abi_jk}-\eqref{eq:abi_ij}, respectively.
Now, we have obtained the entries of Hessian matrix $H$ in \eqref{eq:HessianE}.
\begin{theorem} \label{thm:Hquantity}
Let $H = \begin{bmatrix}
H_{\boldsymbol{\theta}\boldsymbol{\theta}} & H_{\boldsymbol{\theta}{\boldsymbol{\phi}}} \\ H_{{\boldsymbol{\phi}}\boldsymbol{\theta}} & H_{{\boldsymbol{\phi}}{\boldsymbol{\phi}}}
\end{bmatrix}$ with $H_{\boldsymbol{\theta}{\boldsymbol{\phi}}} = H_{{\boldsymbol{\phi}}\boldsymbol{\theta}}^\top$. The diagonal and nondiagonal entries of the blocks are
\begin{equation*} 
\left\{
\begin{array}{r@{}l}
\big[ H_{\boldsymbol{\theta}\boldsymbol{\theta}} \big]_{ii} &=
\sum_{\{j,k\}\in S_{\mathcal{E}}(i)} \frac{1}{4|f(T_{ijk})|} \left[
\left(\frac{2\partial |f(T_{ijk})|}{\partial \boldsymbol{\theta}_i}\right)^2 + \big(\mathbf{a}^i_{jk}\big)^2
\right] \\[4pt]
&+ \sum_{j \in \mathcal{N}(i)}\tilde{w}_{ij}(\mathbf{x}_i\mathbf{x}_{j} + \mathbf{y}_i\mathbf{y}_{j}),\\[2pt]
\big[ H_{\boldsymbol{\theta}\boldsymbol{\theta}} \big]_{ij} &=
\frac{1}{4|f(T_{ijk})|} \left[
\frac{2\partial |f(T_{ijk})|}{\partial \boldsymbol{\theta}_i} \frac{2\partial |f(T_{ijk})|}{\partial {\boldsymbol{\theta}}_j} + (2\mathbf{a}_{ki}^i\mathbf{a}_{jk}^j - \mathbf{a}_{jk}^i\mathbf{a}_{ki}^j)
\right] \\[4pt]
&+\frac{1}{4|f(T_{k'ji})|} \left[
\frac{2\partial |f(T_{k'ji})|}{\partial \boldsymbol{\theta}_i} \frac{2\partial |f(T_{k'ji})|}{\partial {\boldsymbol{\theta}}_j} + (2\mathbf{a}_{ik'}^i\mathbf{a}_{k'j}^j - \mathbf{a}_{k'j}^i\mathbf{a}_{ik'}^j)
\right] \\[4pt]
&- \tilde{w}_{ij}(\mathbf{x}_i\mathbf{x}_j + \mathbf{y}_i\mathbf{y}_j);\\
\end{array}
\right.
\end{equation*}
\begin{equation*}
\left\{
\begin{array}{r@{}l}
\big[ H_{\boldsymbol{\phi}\boldsymbol{\phi}} \big]_{ii} &=
\sum_{\{j,k\}\in S_{\mathcal{E}}(i)} \frac{1}{4|f(T_{ijk})|} \left[
\left(\frac{2\partial |f(T_{ijk})|}{\partial \boldsymbol{\phi}_i}\right)^2 + \big(\mathbf{b}^i_{jk}\big)^2
\right] \\[4pt]
&+ \sum_{j \in \mathcal{N}(i)}\tilde{w}_{ij}(\mathbf{x}_i\mathbf{x}_{j} + \mathbf{y}_i\mathbf{y}_{j} + \mathbf{z}_i\mathbf{z}_{j}),\\[4pt]
\big[ H_{\boldsymbol{\phi}\boldsymbol{\phi}} \big]_{ij} &=
\frac{1}{4|f(T_{ijk})|} \left[
\frac{2\partial |f(T_{ijk})|}{\partial \boldsymbol{\phi}_i} \frac{2\partial |f(T_{ijk})|}{\partial {\boldsymbol{\phi}}_j} + (2\mathbf{b}_{ki}^i\mathbf{b}_{jk}^j - \mathbf{b}_{jk}^i\mathbf{b}_{ki}^j)
\right] \\[4pt]
&+\frac{1}{4|f(T_{k'ji})|} \left[
\frac{2\partial |f(T_{k'ji})|}{\partial \boldsymbol{\phi}_i} \frac{2\partial |f(T_{k'ji})|}{\partial {\boldsymbol{\phi}}_j} + (2\mathbf{b}_{ik'}^i\mathbf{b}_{k'j}^j - \mathbf{b}_{k'j}^i\mathbf{b}_{ik'}^j)
\right] \\[4pt]
&- \tilde{w}_{ij}(\mathbf{u}_i\mathbf{u}_{j} + \mathbf{v}_i\mathbf{v}_{j} + \mathbf{w}_i\mathbf{w}_{j});\\
\end{array}
\right.
\end{equation*}
\begin{equation} \label{eq:Hpt}
\left\{
\begin{array}{r@{}l}
\big[H_{\boldsymbol{\phi}\boldsymbol{\theta}}\big]_{ii} &=
\sum_{\{j,k\}\in S_{\mathcal{E}}(i)} \frac{1}{4|f(T_{ijk})|} \left[
\frac{2\partial |f(T_{ijk})|}{\partial \boldsymbol{\phi}_i} \frac{2\partial |f(T_{ijk})|}{\partial {\boldsymbol{\theta}}_i} + \mathbf{b}^i_{jk}\mathbf{a}^i_{jk}
\right] \\[4pt]
&+\sum_{j \in \mathcal{N}(i)} \tilde{w}_{ij}(\mathbf{v}_i\mathbf{x}_j - \mathbf{u}_i\mathbf{y}_j), \\[2pt]
\big[H_{\boldsymbol{\phi}\boldsymbol{\theta}}\big]_{ij} &=
\frac{1}{4|f(T_{ijk})|} \left[
\frac{2\partial |f(T_{ijk})|}{\partial \boldsymbol{\phi}_i} \frac{2\partial |f(T_{ijk})|}{\partial {\boldsymbol{\theta}}_j} + (2\mathbf{b}_{ki}^i\mathbf{a}_{jk}^j - \mathbf{b}_{jk}^i\mathbf{a}_{ki}^j)
\right] \\[4pt]
&+\frac{1}{4|f(T_{k'ji})|} \left[
\frac{2\partial |f(T_{k'ji})|}{\partial \boldsymbol{\phi}_i} \frac{2\partial |f(T_{k'ji})|}{\partial {\boldsymbol{\theta}}_j} + (2\mathbf{b}_{ik'}^i\mathbf{a}_{k'j}^j - \mathbf{b}_{k'j}^i\mathbf{a}_{ik'}^j)
\right] \\[4pt]
&-\tilde{w}_{ij}(\mathbf{v}_i\mathbf{x}_j - \mathbf{u}_i\mathbf{y}_j);
 \end{array}
\right.
\end{equation}
respectively.
\end{theorem}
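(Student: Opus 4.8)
The plan is to read off the four blocks of $H$ from the decomposition $H = H_1 - H_2 - K$ in \eqref{eq:HessianE}--\eqref{eq:HessianE=2}, computing the contributions of $H_1$, $K$ and $H_2$ separately and then adding them, since the $-H_2$ piece is already essentially written out in the paragraphs preceding the theorem. Throughout I will use the closed forms \eqref{eq:nablaxyzuvw} for $\nabla\mathbf{x},\nabla\mathbf{y},\nabla\mathbf{z}$, the fact that $D(f)=L-L(f)$ is a cotangent-type Laplacian with weights $\tilde w_{ij}$, so $[D(f)]_{ij}=-\tilde w_{ij}$ whenever $[v_i,v_j]\in\mathcal{E}(M)$, $[D(f)]_{ii}=\sum_{j\in\mathcal{N}(i)}\tilde w_{ij}$, and $D(f)\mathbf{1}=\mathbf{0}$, together with the elementary spherical identities $\mathbf{x}_i^2+\mathbf{y}_i^2+\mathbf{z}_i^2=1$, $\mathbf{u}_i^2+\mathbf{v}_i^2+\mathbf{w}_i^2=1$, $\mathbf{v}_i\mathbf{x}_i-\mathbf{u}_i\mathbf{y}_i=0$ and $\mathbf{u}_i\mathbf{x}_i+\mathbf{v}_i\mathbf{y}_i-\mathbf{w}_i\mathbf{z}_i=0$, all immediate from \eqref{eq:xyz}--\eqref{eq:uvw}.

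First I would expand $H_1=\nabla\mathbf{x}^\top D(f)\nabla\mathbf{x}+\nabla\mathbf{y}^\top D(f)\nabla\mathbf{y}+\nabla\mathbf{z}^\top D(f)\nabla\mathbf{z}$ using \eqref{eq:L1L2} and \eqref{eq:nablaxyzuvw}. Each of the four blocks then becomes a $\diag$-sandwiched copy of $D(f)$, e.g. the $\boldsymbol{\theta}\boldsymbol{\theta}$-block is $\diag(\mathbf{y})D(f)\diag(\mathbf{y})+\diag(\mathbf{x})D(f)\diag(\mathbf{x})$. Reading off entries, the off-diagonal $(i,j)$ entry is $-\tilde w_{ij}$ times one of $\mathbf{x}_i\mathbf{x}_j+\mathbf{y}_i\mathbf{y}_j$, $\mathbf{v}_i\mathbf{x}_j-\mathbf{u}_i\mathbf{y}_j$, $\mathbf{u}_i\mathbf{u}_j+\mathbf{v}_i\mathbf{v}_j+\mathbf{w}_i\mathbf{w}_j$ (for the $\boldsymbol{\theta}\boldsymbol{\theta}$, $\boldsymbol{\phi}\boldsymbol{\theta}$, $\boldsymbol{\phi}\boldsymbol{\phi}$ blocks respectively), and the diagonal entry is $\big(\sum_{j\in\mathcal{N}(i)}\tilde w_{ij}\big)$ times the corresponding ``$i$-evaluated'' quantity ($\mathbf{x}_i^2+\mathbf{y}_i^2$, $\mathbf{v}_i\mathbf{x}_i-\mathbf{u}_i\mathbf{y}_i$, $\mathbf{u}_i^2+\mathbf{v}_i^2+\mathbf{w}_i^2$). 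Next, for $K$ I would rewrite $\mathbf{p},\mathbf{q},\mathbf{r}$ from \eqref{eq:pqr}: since $D(f)\mathbf{1}=\mathbf{0}$ we get $\mathbf{p}_i=\sum_{j\in\mathcal{N}(i)}\tilde w_{ij}\mathbf{x}_{ij}$ and similarly for $\mathbf{q}_i,\mathbf{r}_i$, so that $K_{ii,\boldsymbol{\theta}\boldsymbol{\theta}}=\mathbf{p}_i\mathbf{x}_i+\mathbf{q}_i\mathbf{y}_i=\sum_j\tilde w_{ij}(\mathbf{x}_i^2+\mathbf{y}_i^2)-\sum_j\tilde w_{ij}(\mathbf{x}_i\mathbf{x}_j+\mathbf{y}_i\mathbf{y}_j)$, and likewise for the other three blocks, while $K$ has no off-diagonal entries. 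Subtracting $K$ from $H_1$, the first sum in each $K_{ii}$ cancels the diagonal term of $H_1$ precisely because of the unit-length identities above (for the $\boldsymbol{\phi}\boldsymbol{\theta}$-block both $H_1$ and $K$ contribute zero on the diagonal since $\mathbf{v}_i\mathbf{x}_i-\mathbf{u}_i\mathbf{y}_i=0$), leaving exactly the clean terms $+\sum_{j\in\mathcal{N}(i)}\tilde w_{ij}(\mathbf{x}_i\mathbf{x}_j+\mathbf{y}_i\mathbf{y}_j)$, $+\sum_{j\in\mathcal{N}(i)}\tilde w_{ij}(\mathbf{v}_i\mathbf{x}_j-\mathbf{u}_i\mathbf{y}_j)$, $+\sum_{j\in\mathcal{N}(i)}\tilde w_{ij}(\mathbf{x}_i\mathbf{x}_j+\mathbf{y}_i\mathbf{y}_j+\mathbf{z}_i\mathbf{z}_j)$ on the diagonals, and the $-\tilde w_{ij}(\cdots)$ terms of the statement off the diagonal.

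Finally I would add $-H_2$. By \Cref{lemma:sparsity} the blocks of $H_2$ have the sparsity of $L$, and their entries are obtained exactly as in the derivation just above the theorem: one takes the Cartesian expressions \eqref{eq:bajni} and \eqref{eq:bajei}, writes them as inner products, composes with the diagonal Jacobians $\partial\mathbf{f}_i/\partial(\boldsymbol{\theta}_i,\boldsymbol{\phi}_i)$, and replaces $\mathbf{x}_{\cdot\cdot},\mathbf{y}_{\cdot\cdot},\mathbf{z}_{\cdot\cdot}$ by $\mathbf{a}^i_{\cdot\cdot},\mathbf{b}^i_{\cdot\cdot}$ from \eqref{eq:abi_jk}--\eqref{eq:abi_ij} and $\partial\mathbf{x}_i,\partial\mathbf{y}_i,\partial\mathbf{z}_i$ by $\partial\boldsymbol{\theta}_i,\partial\boldsymbol{\phi}_i$ with the area partials \eqref{eq:pApthetai}--\eqref{eq:pApphii}; at an off-diagonal entry this yields one such term for each of the two triangles $T_{ijk}$ and $T_{k'ji}$ sharing the edge $[v_i,v_j]$. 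Negating these (the $-H_2$ in \eqref{eq:HessianE=2} flips the sign) and adding to $H_1-K$ produces the three displayed formulas for $H_{\boldsymbol{\theta}\boldsymbol{\theta}}$, $H_{\boldsymbol{\phi}\boldsymbol{\phi}}$, $H_{\boldsymbol{\phi}\boldsymbol{\theta}}$; the remaining identity $H_{\boldsymbol{\theta}\boldsymbol{\phi}}=H_{\boldsymbol{\phi}\boldsymbol{\theta}}^\top$ is automatic since $H$ is a Hessian on the region where all triangles of $f(M)$ are nondegenerate. The hard part is not any single computation but the bookkeeping: carrying the cyclic permutations of $i,j,k$ consistently through all three summands, tracking the two sign conventions ($[D(f)]_{ij}=-\tilde w_{ij}$ versus the positive diagonal, and the sign flip from $-H_2$), correctly matching the Cartesian inner-product reshaping of $L_2$ onto the spherical $\mathbf{a}^i_{\cdot\cdot},\mathbf{b}^i_{\cdot\cdot}$, and checking that the ``spurious'' $\sum_j\tilde w_{ij}$-weighted diagonal contributions of $H_1$ and $K$ cancel exactly — not merely up to remainder — under the four spherical identities.
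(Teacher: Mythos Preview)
Your proposal is correct and follows essentially the same route as the paper: the paper derives the entries of $-H_2$ in the paragraphs immediately preceding the theorem (culminating in the displayed formulas for $[H_{2,\boldsymbol{\theta}\boldsymbol{\phi}}]_{ii}$ and $[H_{2,\boldsymbol{\theta}\boldsymbol{\phi}}]_{ij}$ and the replacement rule $\mathbf{x}_{\cdot\cdot},\mathbf{y}_{\cdot\cdot},\mathbf{z}_{\cdot\cdot}\mapsto\mathbf{a}^i_{\cdot\cdot},\mathbf{b}^i_{\cdot\cdot}$), then simply states the theorem as a summary without a separate proof environment, leaving the $H_1-K$ bookkeeping implicit. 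Your explicit cancellation argument for the diagonal of $H_1-K$ via the spherical identities is exactly the computation the paper skips over, so you are filling in what the paper takes for granted rather than arguing differently.
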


Finally, we present a theorem for the proposed algorithm in \Cref{sec:algorithm}, which demonstrates singularity and the corresponding eigenpair of $H$.

\begin{theorem} \label{thm:Heigen}
The null space of $H$ defined in \eqref{eq:HessianE} has a basis $[
\mathbf{1}^\top, \mathbf{0}^\top
]^\top$, where $\text{dim}(\mathbf{1})=\text{dim}(\mathbf{0})=n$.
\end{theorem}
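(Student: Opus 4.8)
The plan is to prove the membership $[\mathbf 1^\top,\mathbf 0^\top]^\top\in\ker H$ by exploiting a symmetry of the conformal energy, and then to show that this single vector spans $\ker H$. The guiding observation is that adding $t\mathbf 1$ to the azimuth vector $\boldsymbol\theta$ rotates \emph{every} vertex by the same angle about the $z$-axis: by \eqref{eq:xyz}, $\mathbf f_i(\boldsymbol\theta+t\mathbf 1,\boldsymbol\phi)=R_z(t)\,\mathbf f_i(\boldsymbol\theta,\boldsymbol\phi)$, where $R_z(t)$ denotes the rotation about the $z$-axis by angle $t$. Since $E_C$ in \eqref{def:CE} depends on $\mathbf f$ only through the edge lengths $\|\mathbf f_{ij}\|$ (appearing in $\langle L\mathbf f,\mathbf f\rangle$) and the face areas $|f(T_{ijk})|$ (appearing in $A(f)$, via \Cref{lemma:Af}), and both are unchanged by a rigid rotation, $E_C$ viewed as a function of $(\boldsymbol\theta,\boldsymbol\phi)$ satisfies $E_C(\boldsymbol\theta+t\mathbf 1,\boldsymbol\phi)=E_C(\boldsymbol\theta,\boldsymbol\phi)$ for all $t$ and all $(\boldsymbol\theta,\boldsymbol\phi)$.

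Differentiating this identity in $t$ at $t=0$ gives $\mathbf g^\top[\mathbf 1^\top,\mathbf 0^\top]^\top=\sum_{i=1}^n \partial E_C/\partial\boldsymbol\theta_i\equiv 0$ on the whole $(\boldsymbol\theta,\boldsymbol\phi)$-domain, where $\mathbf g=\nabla E_C$ is the gradient \eqref{eq:gradientE}. Taking the gradient (with respect to $(\boldsymbol\theta,\boldsymbol\phi)$) of this identically-vanishing scalar and using $H=\nabla\mathbf g=H^\top$, we conclude $H\,[\mathbf 1^\top,\mathbf 0^\top]^\top=\mathbf 0$. Alternatively, the same identity can be verified directly from \Cref{thm:Hquantity}: summing any $\boldsymbol\theta$-row over the one-ring, the $\tilde w_{ij}$-terms in the diagonal and off-diagonal entries cancel, while the remaining $L_2$/$H_2$-type terms cancel by the identities $(\partial_{\boldsymbol\theta_i}+\partial_{\boldsymbol\theta_j}+\partial_{\boldsymbol\theta_k})|f(T_{ijk})|=0$ and $(\partial_{\boldsymbol\theta_i}+\partial_{\boldsymbol\theta_j}+\partial_{\boldsymbol\theta_k})c_{ij}(f)=0$ — the $z$-rotation analogues of the Cartesian relation $(\partial_{\mathbf f_i}+\partial_{\mathbf f_j}+\partial_{\mathbf f_k})c_{ij}(f)=0$ used in the proof of the nullity-$3$ theorem, both holding because a simultaneous $\boldsymbol\theta$-shift at the three vertices of a triangle is a rigid rotation of that triangle.

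It remains to show that $[\mathbf 1^\top,\mathbf 0^\top]^\top$ spans $\ker H$, which I expect to be the main obstacle. Writing $H=J^\top(L_1-L_2)J-K$ with $J=[\nabla\mathbf x^\top,\nabla\mathbf y^\top,\nabla\mathbf z^\top]^\top$, one first observes that $J$ has full column rank $2n$, with image the vertexwise tangent space $\{[\boldsymbol\delta^x;\boldsymbol\delta^y;\boldsymbol\delta^z] : (\boldsymbol\delta^x_i,\boldsymbol\delta^y_i,\boldsymbol\delta^z_i)\perp\mathbf f_i,\ i=1,\dots,n\}$, since the spherical-coordinate map is a local diffeomorphism away from the poles $\boldsymbol\phi_i\in\{0,\pi\}$. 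The subtlety is that $E_C$ is $SO(3)$-invariant, but only the $z$-rotation among the three generators of $SO(3)$ is linear in $(\boldsymbol\theta,\boldsymbol\phi)$; the $x$- and $y$-rotations are nonlinear, and their configuration-dependent generators $\mathbf d$ satisfy merely $H\mathbf d=-(\nabla\mathbf d)^\top\mathbf g$, which need not vanish unless $\mathbf g=\mathbf 0$. Thus at a non-critical configuration these do not enlarge the kernel, and $\ker H=\mathrm{span}\{[\mathbf 1^\top,\mathbf 0^\top]^\top\}$ follows by combining the rank of $J$, the kernel of $L_1-L_2$ from the preceding theorem, and the diagonal structure of $K$. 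Pinning down the precise non-degeneracy hypothesis under which this equality holds — and noting that at a highly symmetric conformal map the kernel does absorb the extra rotational directions — is the delicate part; the membership of $[\mathbf 1^\top,\mathbf 0^\top]^\top$ itself is routine given the rotation invariance.
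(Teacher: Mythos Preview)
Your membership argument via the $z$-rotation invariance $E_C(\boldsymbol\theta+t\mathbf 1,\boldsymbol\phi)=E_C(\boldsymbol\theta,\boldsymbol\phi)$ is correct and is a more conceptual route than the paper's. The paper establishes $H[\mathbf 1^\top,\mathbf 0^\top]^\top=\mathbf 0$ by direct entrywise computation from \Cref{thm:Hquantity}: it writes out $\mathbf e_i^\top H_{\boldsymbol\phi\boldsymbol\theta}\mathbf 1$ by summing the off-diagonal formula \eqref{eq:Hpt} over $j\in\mathcal N(i)$ and adding the diagonal term, then invokes the identities $\mathbf a_{ij}^i=\mathbf a_{ij}^j$, $\mathbf a_{jk}^j=\mathbf a_{jk}^k$, $\mathbf a_{ki}^k=\mathbf a_{ki}^i$ (your observation that a simultaneous $\boldsymbol\theta$-shift at all three vertices is a rigid rotation of the triangle, written coordinatewise) to obtain $(\partial_{\boldsymbol\theta_i}+\partial_{\boldsymbol\theta_j}+\partial_{\boldsymbol\theta_k})|f(T_{ijk})|=0$ and the telescoping $\mathbf a_{ij}^i+\mathbf a_{jk}^i+\mathbf a_{ki}^i=0$. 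This is exactly the entrywise version of what you sketch as your ``alternative''. Your symmetry argument explains \emph{why} the kernel contains this direction (cf.\ \Cref{remark:rotation}) and would survive any change of coordinates; the paper's calculation is more concrete and self-contained given the formulas already derived.

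On the spanning question: you are right to flag it as the delicate part, and your remarks about the $x$- and $y$-rotation generators not lying in $\ker H$ away from critical points are on target. But note that the paper's own proof addresses \emph{only} the membership $H[\mathbf 1^\top,\mathbf 0^\top]^\top=\mathbf 0$; it does not argue that the null space is exactly one-dimensional. So your proposal already covers what the paper actually proves, and your discussion of the remaining dimension count goes beyond it.
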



\begin{proof}
To prove the assertion, we need to indicate only that

\begin{align*}
    \mathbf{e}_i^\top H_{\boldsymbol{\theta}\boldsymbol{\theta}} \mathbf{1} = 0,
    \quad \text{ and } \quad
    \mathbf{e}_i^\top H_{{\boldsymbol{\phi}}\boldsymbol{\theta}} \mathbf{1} = 0, \quad \text{for } i = 1,2,\cdots,n.
\end{align*}
We first consider $H_{{\boldsymbol{\phi}}\boldsymbol{\theta}}$.
By \eqref{eq:Hpt} we have
\begin{align}
\mathbf{e}_i^\top H_{{\boldsymbol{\phi}}\boldsymbol{\theta}} \mathbf{1} = \sum_{\{j,k\}\in S_{\mathcal{E}}(i)} \frac{1}{4|f(T_{ijk})|} & \Bigg[
\frac{2\partial |f(T_{ijk})|}{\partial {\boldsymbol{\phi}}_i} \cdot 2\left(
\frac{\partial }{\partial \boldsymbol{\theta}_i} + \frac{\partial }{\partial \boldsymbol{\theta}_j} + \frac{\partial }{\partial \boldsymbol{\theta}_k}\right)|f(T_{ijk})| \nonumber\\
&+\mathbf{b}^i_{jk}\mathbf{a}^i_{jk} + 2\mathbf{b}_{ki}^i\mathbf{a}_{jk}^j - \mathbf{b}_{jk}^i\mathbf{a}_{ki}^j + 2\mathbf{b}_{ij}^i\mathbf{a}_{jk}^k - \mathbf{b}_{jk}^i\mathbf{a}_{ij}^k
 \Bigg] \label{eq:H2phithetasumi}
\end{align}
It is easy to verify that
\begin{align}
    \mathbf{a}_{ij}^i = \mathbf{a}_{ij}^j,\quad
    \mathbf{a}_{jk}^j = \mathbf{a}_{jk}^k,\quad
    \mathbf{a}_{ki}^k = \mathbf{a}_{ki}^i. \label{eq:supexchange}
\end{align}
Combining \eqref{eq:supexchange} with \eqref{eq:pApthetai}, we have
\begin{align*}
\left(
\frac{\partial }{\partial \boldsymbol{\theta}_i} + \frac{\partial }{\partial \boldsymbol{\theta}_j} + \frac{\partial }{\partial \boldsymbol{\theta}_k}\right)|f(T_{ijk})| = 0.
\end{align*}
Hence, by using \eqref{eq:supexchange} again, \eqref{eq:H2phithetasumi} becomes
\begin{align*}
\mathbf{e}_i^\top H_{{\boldsymbol{\phi}}\boldsymbol{\theta}} \mathbf{1} =&
\sum_{\{j,k\}\in S_{\mathcal{E}}(i)} \frac{1}{4|f(T_{ijk})|} \left[
\mathbf{b}^i_{jk}\mathbf{a}^i_{jk} + 2\mathbf{b}_{ki}^i\mathbf{a}_{jk}^j - \mathbf{b}_{jk}^i\mathbf{a}_{ki}^j + 2\mathbf{b}_{ij}^i\mathbf{a}_{jk}^k - \mathbf{b}_{jk}^i\mathbf{a}_{ij}^k
\right] \\
= & \sum_{\{j,k\}\in S_{\mathcal{E}}(i)} \frac{1}{4|f(T_{ijk})|}
\mathbf{b}^i_{jk}\left( \mathbf{a}_{ij}^i + \mathbf{a}^i_{jk} + \mathbf{a}_{ki}^i \right) = 0.
\end{align*}
Similarly, we also have $\mathbf{e}_i^\top H_{\boldsymbol{\theta}\boldsymbol{\theta}} \mathbf{1} = 0$. Notably, $\mathbf{e}_i^\top H_{{\boldsymbol{\phi}}{\boldsymbol{\phi}}} \mathbf{1} \neq 0$. Therefore, the theorem is proved.
\end{proof}

\begin{remark} \label{remark:rotation}
Geometrically, the one-dimensional null space of $H$ reveals that the conformal energy is invariant up to a rotation along the latitude, while that along the longitude is not characterized, since it is not linearly related to ${\boldsymbol{\phi}}$.
\end{remark}

\section{Hessian-Based Trust Region Algorithm} \label{sec:algorithm}

In this section, we develop an HBTR algorithm
to minimize the conformal energy for the computation of the conformal map from a closed surface of genus-$0$ to a unit sphere. Here, we review the optimization problem,
\begin{align} \label{prob:CEM}
    \min E_C(\boldsymbol{\theta},{\boldsymbol{\phi}}):= \frac{1}{2} \langle D(f)\mathbf{f}, \mathbf{f} \rangle, \qquad
    \mathbf{f} = [\cos\boldsymbol{\theta} \odot \sin{\boldsymbol{\phi}},
    \sin\boldsymbol{\theta} \odot \sin{\boldsymbol{\phi}},
    \cos{\boldsymbol{\phi}}].
\end{align}
We loosen the box constraint $(\boldsymbol{\theta},{\boldsymbol{\phi}}) \in [0,2\pi]^n \times [0,\pi]^n$ and consider \eqref{prob:CEM} as an unconstrained problem. In \Cref{sec:Hessian}, the gradient vector and Hessian matrix of $E_C({\boldsymbol{\theta}},{\boldsymbol{\phi}})$ are derived in \eqref{eq:gradientE} and \Cref{thm:Hquantity}, respectively. The sparsity of the Hessian matrix shown in \Cref{thm:sparsity} guarantees the feasibility of fast computation associated with $H$. Specifically, we solve the large-scale sparse linear system to obtain the Newton direction: $H\mathbf{s} = -\mathbf{g}$ as in \eqref{eq:newtonstep}.
As \Cref{thm:Heigen} demonstrated, $H$ is singular, and the general solution can be expressed as a spherical solution with arbitrary rotation along latitude according to \Cref{remark:rotation}. Therefore, we fix the first entry of $\mathbf{s}$ and let
\[
H = \begin{bmatrix}
    h_{11} & \mathbf{h}_1^\top \\ \mathbf{h}_1 & \hat{H}
\end{bmatrix}, \quad
\mathbf{s} = \begin{bmatrix}
    0 \\ \hat{\mathbf{s}}
\end{bmatrix}, \quad
\mathbf{g} = \begin{bmatrix}
    g_{1} \\ \hat{\mathbf{g}}
\end{bmatrix}.
\]
Then, we solve the linear system
\begin{align} \label{eq:newtonstephat}
    \hat{H}\hat{\mathbf{s}} = -\hat{\mathbf{g}}.
\end{align}
It is easy to verify that $H\mathbf{s} = -\mathbf{g}$ holds for $\mathbf{s} = [0,\hat{\mathbf{s}}^\top ]^\top$. Geometrically, this approach means that the longitude of the first vertex always remains invariant and that meaningless rotation of the sphere is avoided during the iteration.

\begin{remark}
$H$ can be rearranged from $L \otimes \mathbf{1}_{2\times 2}$ to $\mathbf{1}_{2\times 2} \otimes L$. The rearranged matrix is constructed by the $2\times 2$ block in $L$ form such that the block LU decomposition is appropriate for the fast computation of the linear system \eqref{eq:newtonstephat}.
\end{remark}

Therefore, it is natural to use the Newton-type method to solve the optimization problem \eqref{prob:CEM}. However, the pure Newton method is insufficient for solving the optimization problem directly because the Newton method is well known to have local quadratic convergence. Unfortunately, it is almost impractical to directly seek an initial guess sufficiently close to the ideal solution for \eqref{prob:CEM}, especially when facing surfaces with high-curvature regions or complicated shapes. Additionally, the conformal energy of \eqref{prob:CEM} with respect to $(\boldsymbol{\theta},{\boldsymbol{\phi}})$ is nonconvex and nonlinear. Hence, the quadratic convergence generally disappears at the beginning of iterations, and it might take much time to reach the neighborhood with quadratic convergence via the Newton step. Moreover, importantly, the Hessian matrix $H$ is not uniformly positive semidefinite; that is, $H$ may be indefinite at some points $(\boldsymbol{\theta},{\boldsymbol{\phi}})$. Indeed, negative curvature occurs frequently during iteration in practical experiments. Consequently, it is possible that the Newton direction may not be a descent direction.

To overcome this drawback, we introduce the negative gradient direction, which is a descent direction. More specifically, we search the trial step $\mathbf{d}$ from the $2$D subspace spanned by the Newton direction $\mathbf{s}$ and the gradient direction $\mathbf{g}$. If $\mathbf{s}$ is not a descent direction, it is a negative curvature direction, which is still beneficial information for the choice of descent direction.
Moreover, there must be a descent direction in this $2$D subspace since $-\mathbf{g} \in \text{span}(\mathbf{s},\mathbf{g})$. When the iterative point is in the convergence neighborhood, the Newton direction guarantees the quadratic convergence of the algorithm. We utilize the trust region method to search the trial step from $\text{span}(\mathbf{s},\mathbf{g})$. In other words, during each iterative step, we consider the trust region subproblem proposed by Shultz et al.\cite{GSRB85}
\begin{align} \label{opt:subtr2D}
\begin{array}{cl}
    \min & \mathbf{d}^\top H \mathbf{d} + \mathbf{g}^\top \mathbf{d}\\
     \text{s.t.} & \|\mathbf{d}\|\leq\mit\Delta, \ \mathbf{d}\in \text{span}(\mathbf{s},\mathbf{g}),
\end{array}
\end{align}
where $\mit\Delta$ is the trust region radius. The $2$D optimization problem \eqref{opt:subtr2D} is easy to solve and costs less time. The global and local convergence of the trust region method with subproblem \eqref{opt:subtr2D} has been proven by \cite{GSRB85}, and the practical experiment has been verified later in \cite{RBRS88}.

For the error measurement, it is worth noting that $E_C(\boldsymbol{\theta},{\boldsymbol{\phi}})$ is a periodic function. Since we loosen the box constraint $(\boldsymbol{\theta},{\boldsymbol{\phi}}) \in [0,2\pi]^n\times [0,\pi]^n$, $(\boldsymbol{\theta},{\boldsymbol{\phi}})$ might move larger than $2\pi$, while the vertices on the sphere move much less. Hence, it is inappropriate to check the length of step $\mathbf{d}$. Inspired by \cite{MHTM21}, we adopt the optimal rotation and measure the error with
\begin{align} \label{eq:error}
    \delta^{(k)} = \min_{R\in SO(3)} \left\|\mathbf{f}^{(k+1)} - \mathbf{f}^{(k)}R\right\|^2,
\end{align}
where $SO(3) = \left\{R\in \mathbb{R}^{3\times 3} ~|~ R^\top R = I,~\det(R) = 1\right\}$ and $\mathbf{f}^{(k)}$ are the vertices on the sphere at the $k$-th iteration. The error measurement eliminates the error from rotation. In addition, it avoids the miscalculation caused by the periodicity of $E_C(\boldsymbol{\theta},{\boldsymbol{\phi}})$.

We summarize the proposed Hessian-based trust region (HBTR) algorithm for the optimization problem \eqref{prob:CEM}.

\begin{algorithm}
\caption{HBTR for Spherical CEM Problem}
\label{alg:TR}
\begin{algorithmic}[1]
\REQUIRE Triangulation $M$ with vertices $\{v_i,i = 1,2,\cdots, n\}$, tolerance $\varepsilon$.
    \ENSURE $\mathbf{f} \in \mathbb{R}^{n\times 3}$ inducing the conformal map $f$ as in \eqref{eq:bary}.
    \STATE Set $k = 0$ and $\delta^{(0)} = +\infty$.
    \STATE Compute the initial guess $\mathbf{f}$ and the corresponding conformal energy $E_C^{(0)}$ in \eqref{def:CE}.
    \WHILE{$\delta>\varepsilon$}
    \STATE Compute the gradient vector $\mathbf{g}$ and the Hessian matrix $H$ by \eqref{eq:gradientE} and \Cref{thm:Hquantity}.
    \STATE Solve the linear system $\hat{H}\hat{\mathbf{s}} = -\hat{\mathbf{g}}$ via block LU decomposition to get the Newton direction $\mathbf{s}$.
    \STATE Solve the trust region subproblem \eqref{opt:subtr2D} to get the trial step $\mathbf{d}$.
    \STATE Let $E \gets E_C\big((\boldsymbol{\theta},{\boldsymbol{\phi}}) + \mathbf{d}\big)$. If $E_C^{(k)} > E$, set $k \gets k+1$ and update
\begin{align*}
        &(\boldsymbol{\theta},{\boldsymbol{\phi}}) \gets (\boldsymbol{\theta},{\boldsymbol{\phi}}) + \mathbf{d},\\
        &E_C^{(k)} \gets E.
    \end{align*}
    \STATE Compute the error $\delta^{(k)}$ by \eqref{eq:error} and tune the trust region radius $\mit\Delta$.
    \ENDWHILE
\end{algorithmic}
\end{algorithm}

\begin{remark}
Similar to the disk parameterization in \cite{MHTM21}, the bijectivity of the resulting map can almost be guaranteed under the spherical coordinate representation. If not, the folding triangles can be removed by the mean value coordinate \cite{Floa03}.
\end{remark}





\section{Numerical Experiments} \label{sec:experiments}

In this section, we describe the numerical performance of our proposed HBTR method for spherical conformal parameterization on several triangulation models.
All experimental programs are executed in MATLAB R2021a on a personal computer with a 2.50 GHz CPU and 64 GB RAM. Most of the triangulation models are taken from AIM@SHAPE shape repository \cite{AIM}, ALICE \cite{ALICE}, Gu's personal website \cite{GuWebsite}, the Stanford 3D scanning repository \cite{Stanford}, Human Connectome Project \cite{HCP}, and TurboSquid \cite{Turbo}. The triangulations of brain cortical surfaces are generated from BraTS datasets \cite{BRATS20232} via library JIGSAW \cite{Darr18,Darr16,Darr15,DEDI16,Darr14phd} and toolbox Iso2Mesh \cite{QFDA09,APSY20,Iso2Mesh}.
From the abovementioned benchmarks, we take the triangulation models for experiments as shown in \Cref{fig:meshes} and present their basic information for numbers of vertices and faces in \Cref{tab:meshes}.
Notably, no folding occurs on the $8$ models by HBTR.
We apply the SCEM algorithm proposed in \cite{MHTL19} for the initial guess in the experiments. Among the vast experiments, the SCEM can stably provide a great initial guess in a very short time, which is appropriate for the HBTR algorithm.
\begin{table}[htp]
    \centering
    \begin{tabular}{ccc||ccc}
    \hline
    Mesh & $\#V$ & $\#F$ & Mesh & $\#V$ & $\#F$\\
    \hline
    Apple & 17839 & 35674 & Fandisk   & 6475  & 12946 \\
    Arnold & 14530 & 29056 & Horse  & 21013  & 42022 \\
    Brain  & 32160  & 64316 & Planck  & 51108 & 102212  \\
    Bunny  & 55684  & 111364 & Venus  & 14303  & 28602 \\
    \hline
    \end{tabular}
    \caption{The features of triangulation models with $\#V$ and $\#F$ being the number of vertices and triangle faces, respectively.}
    \label{tab:meshes}
\end{table}

\begin{figure}[thp]
    \centering
\begin{tabular}{cccc}
        \includegraphics[clip,trim = {3.5cm 1.25cm 3cm 1cm},height = 3cm]{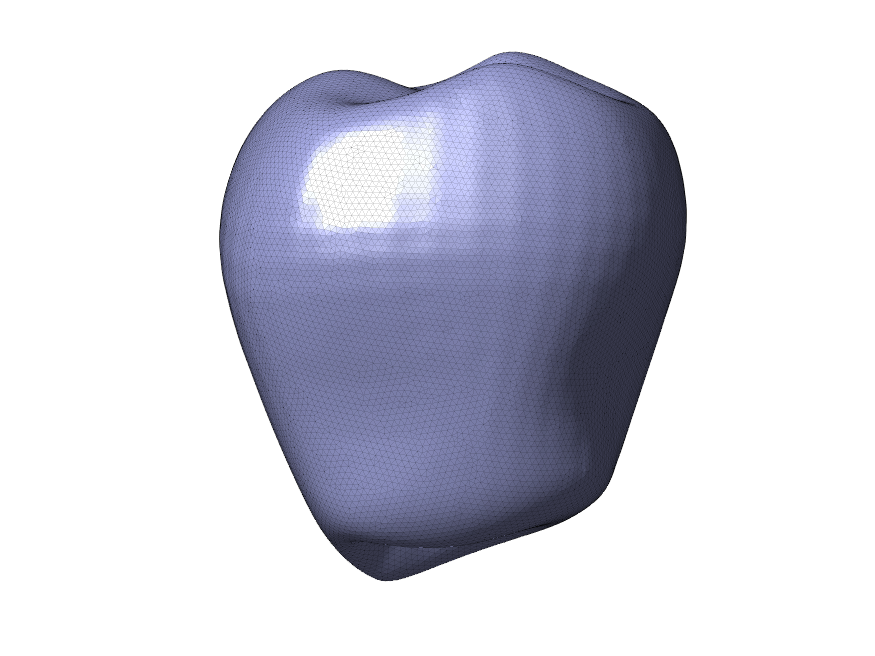} &
        \includegraphics[clip,trim = {4.5cm 1.25cm 4cm 0.5cm},height = 3.5cm]{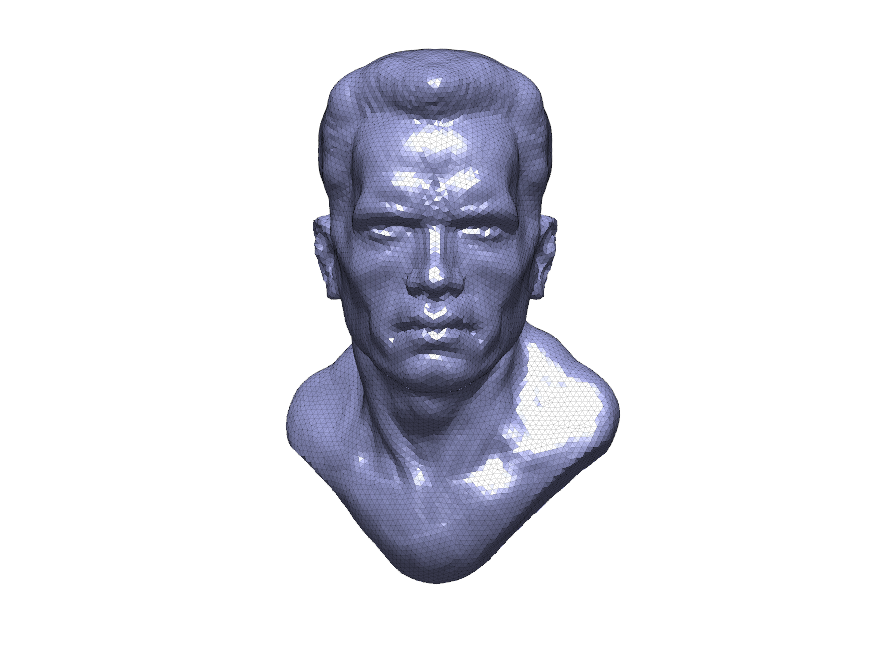} &
        \includegraphics[clip,trim = {3cm 2cm 2.5cm 1.5cm},height = 3cm]{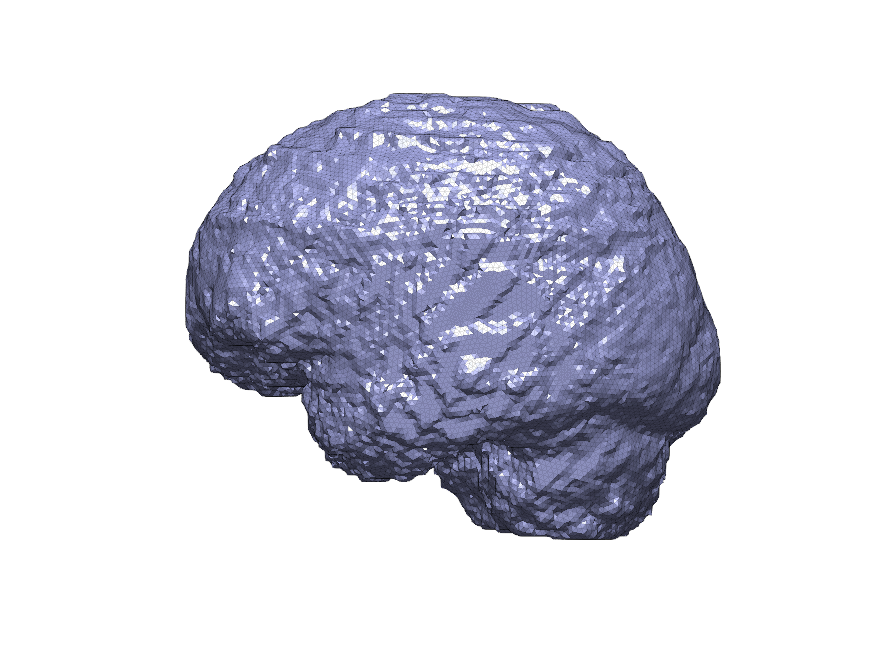} &
        \includegraphics[clip,trim = {2.5cm 0.9cm 2.5cm 1cm},height = 3.5cm]{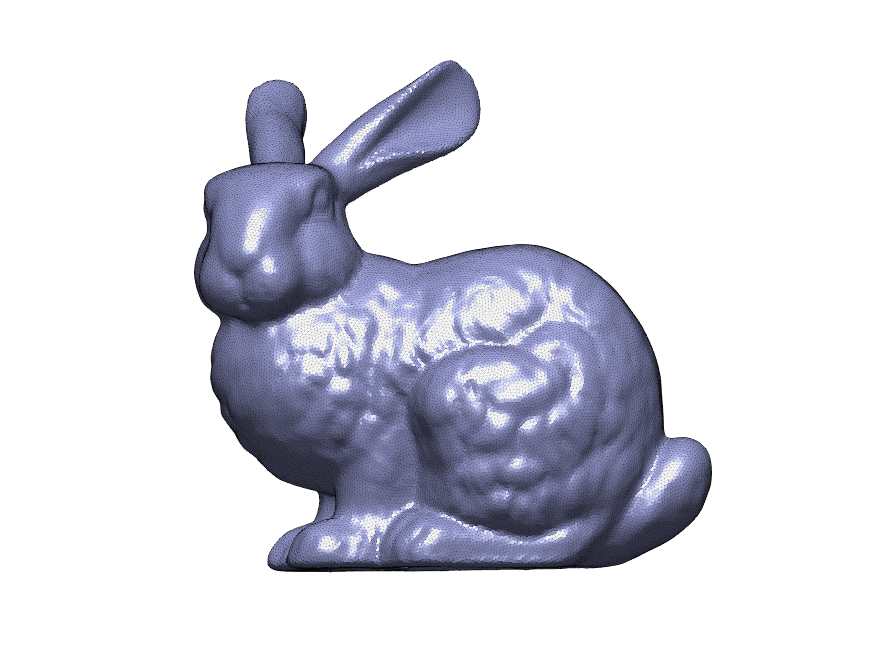} \\
Apple & Arnold & Brain & Bunny \\
        \includegraphics[clip,trim = {3.5cm 2.25cm 2cm 2cm},height = 2.5cm]{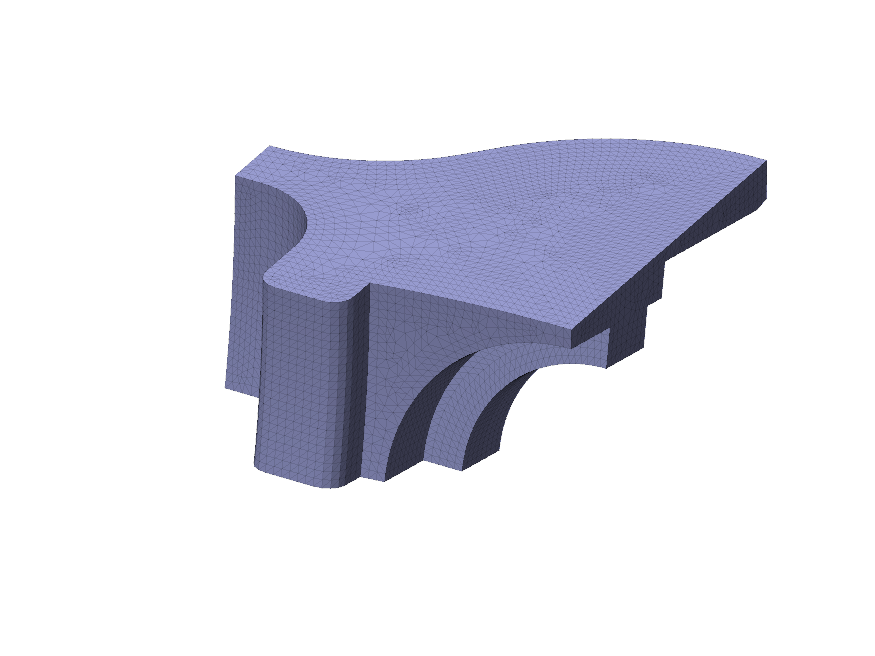} &
        \includegraphics[clip,trim = {4cm 1.25cm 4.5cm 0.5cm},height = 3.5cm]{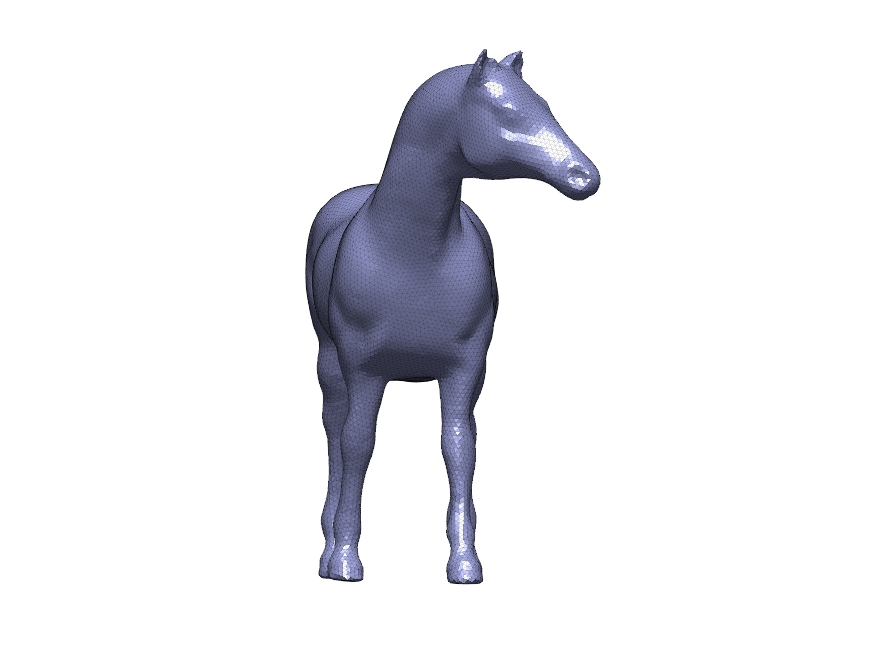} &
        \includegraphics[clip,trim = {4cm 1.75cm 4cm 1cm},height = 3.5cm]{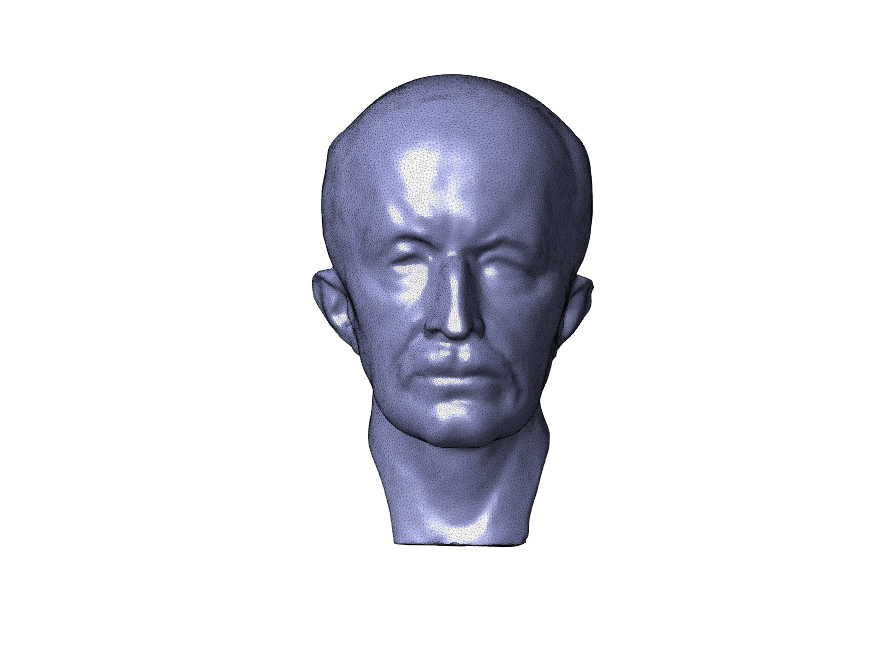} &
        \includegraphics[clip,trim = {4cm 1.25cm 4cm 1cm},height = 3.5cm]{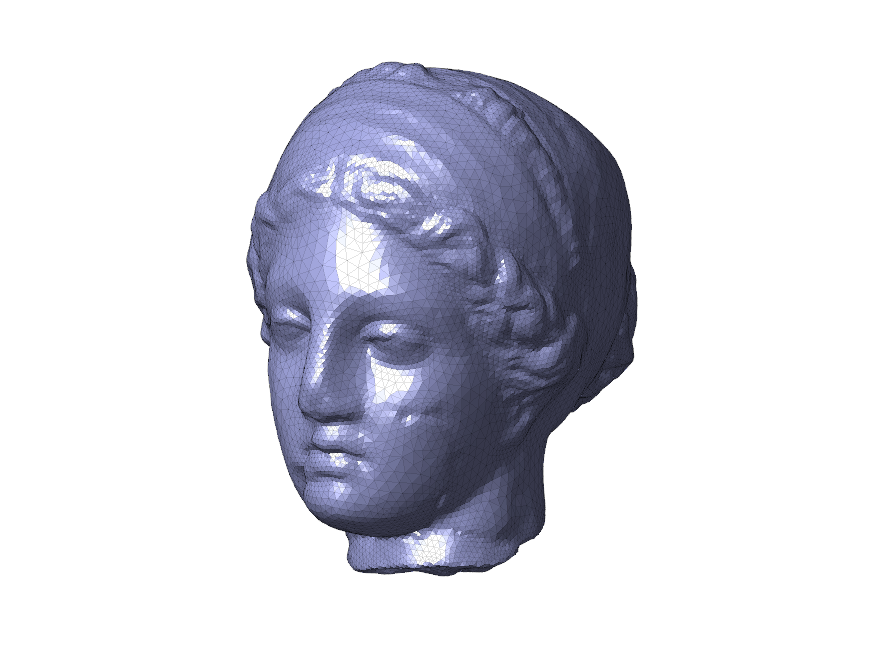} \\
Fandisk &  Horse & \hspace{0.4cm}Planck & Venus \\
\end{tabular}
\caption{The triangulation models for the experiments.}
    \label{fig:meshes}
\end{figure}

\subsection{Convergence behavior and conformal distortion}

We first present the convergence behavior of the proposed
\Cref{alg:TR}. \Cref{fig:fgx} shows the relationship between the number of iterations $k$ and conformal energy $E_C^{(k)}$, the infinity norm of gradient $\|\mathbf{g}^{(k)}\|_\infty$ and the error $\delta^{(k)}$ by \Cref{alg:TR} for models in \Cref{fig:meshes}.
As shown, the conformal energy $E_C^{(k)}$ with the scale on the right decreases linearly first. Meanwhile, $\|\mathbf{g}^{(k)}\|_{\infty}$ and $\delta^{(k)}$ with the scale on the left remain stable within a range. At this moment, the gradient and Newton directions are utilized for the trial step. Then, $E_C^{(k)}$ tends to level off, while $\|\mathbf{g}^{(k)}\|_\infty$ and $\delta^{(k)}$ descend sharply. More specifically, $\|\mathbf{g}^{(k)}\|_\infty$ becomes $10^{-10}$ on most of the models when the iteration loops terminate, implying that the iteration stops at a critical point and the algorithm converges. Moreover, $\delta^{(k)}$ descends in quadratic order. Taking {\it{Brain}} as an example, we have
\[
\delta^{(25)} = 7.4\times 10^{-4},\quad
\delta^{(26)} = 9.3\times 10^{-6}, \quad
\delta^{(27)} = 5.9\times 10^{-10}.
\]
Obviously, it shows the quadratic convergence of the HBTR algorithm.

\begin{figure}[t]
    \centering
\resizebox{\textwidth}{!}{
\begin{tabular}{c@{}c@{}c@{}c}
        \includegraphics[width = 0.24\textwidth]{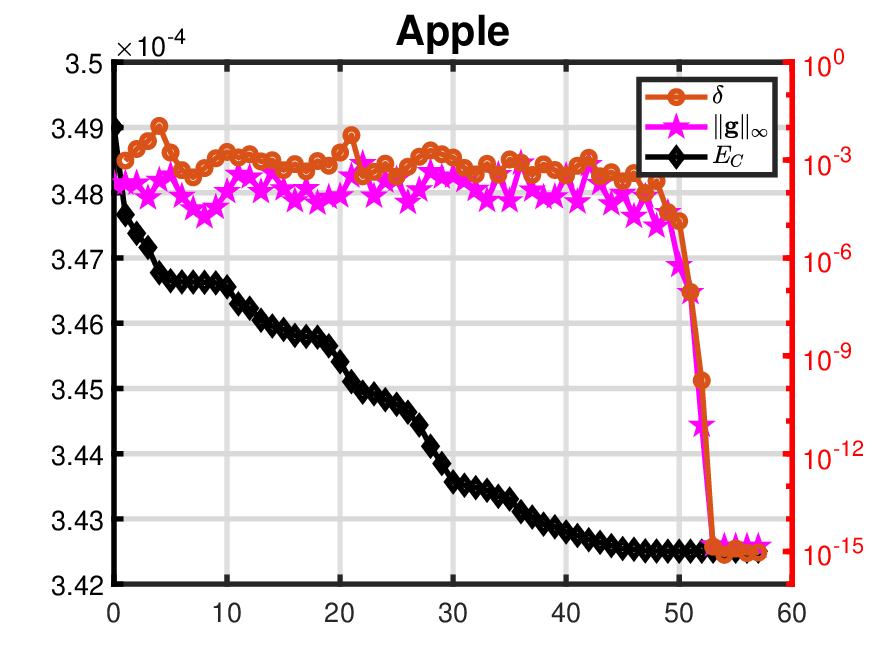} &
        \includegraphics[width = 0.24\textwidth]{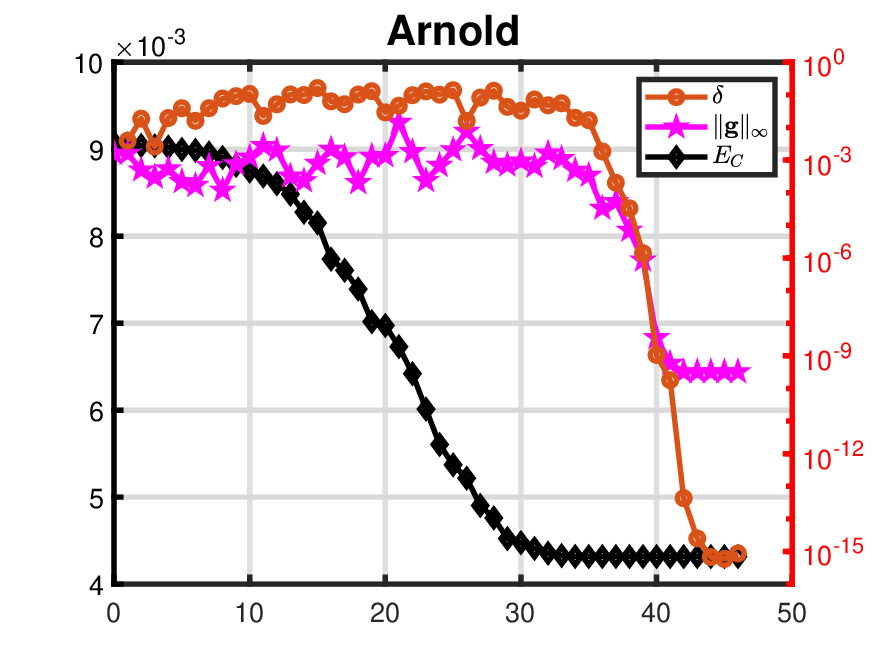} &
        \includegraphics[width = 0.24\textwidth]{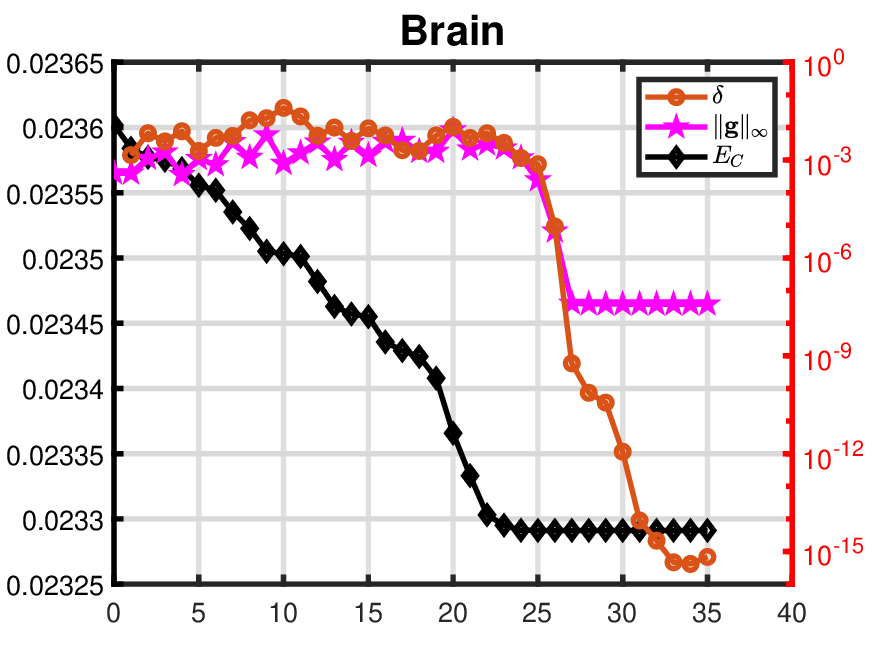} &
        \includegraphics[width = 0.24\textwidth]{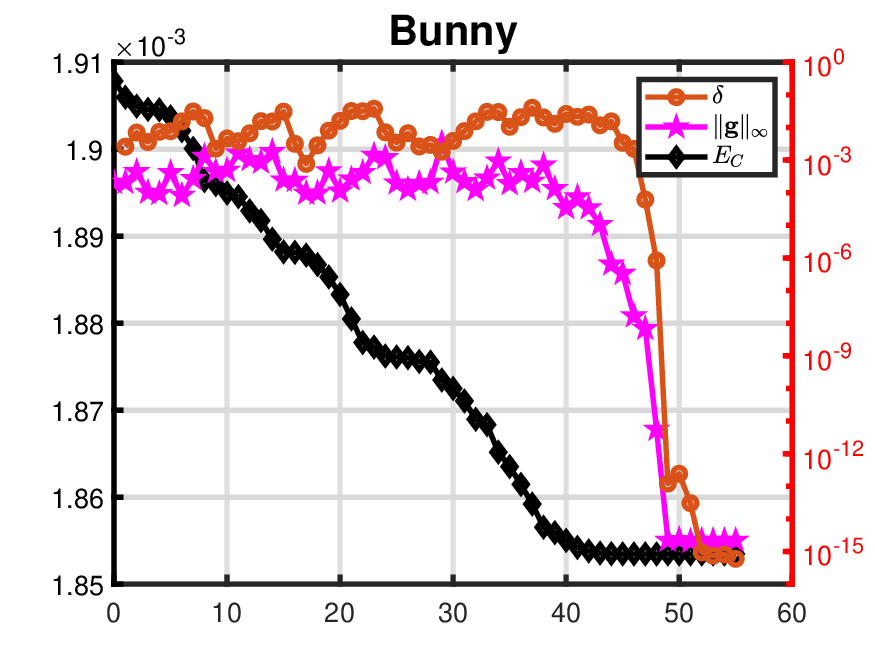} \\
        \includegraphics[width = 0.24\textwidth]{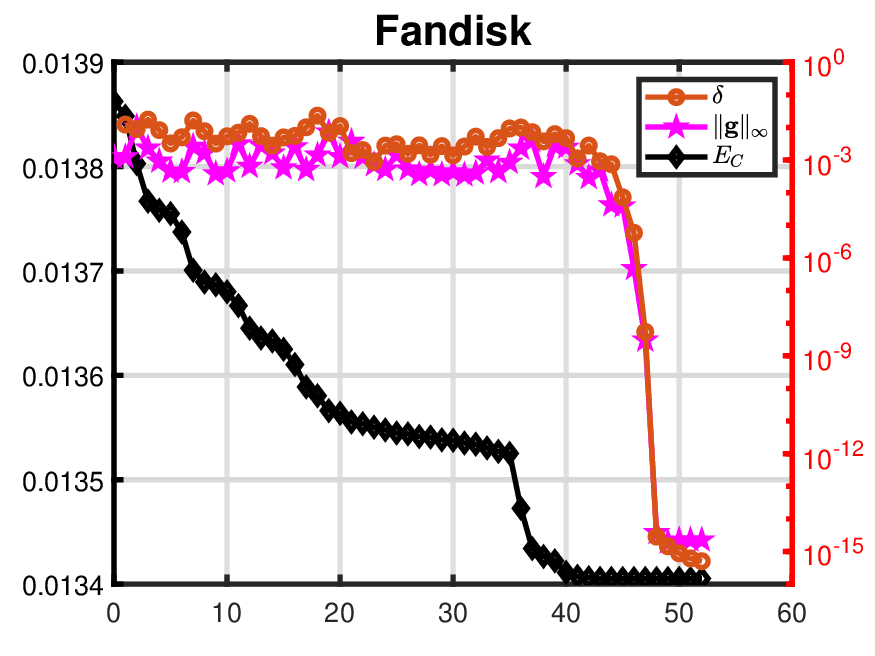} &
        \includegraphics[width = 0.24\textwidth]{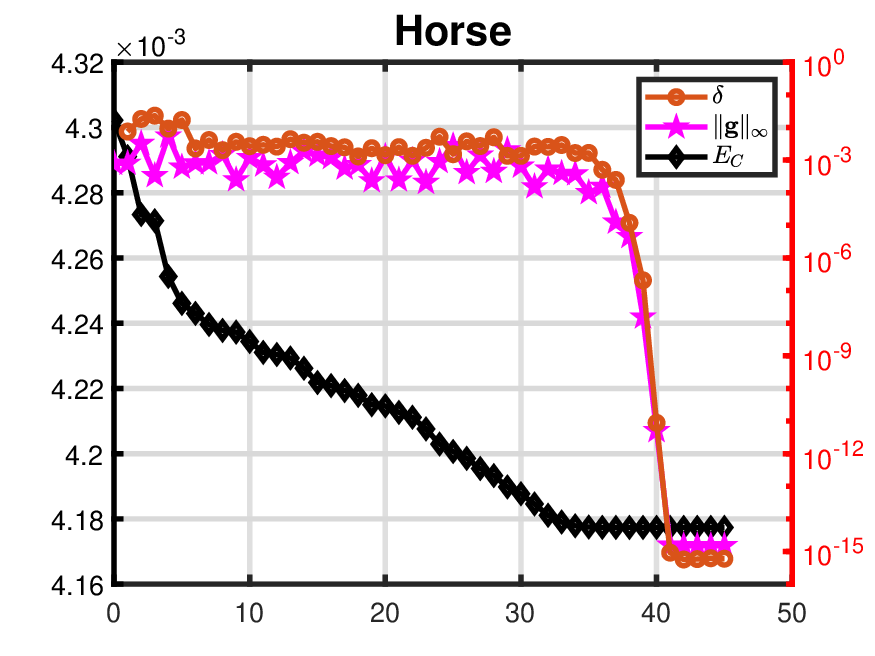} &
        \includegraphics[width = 0.24\textwidth]{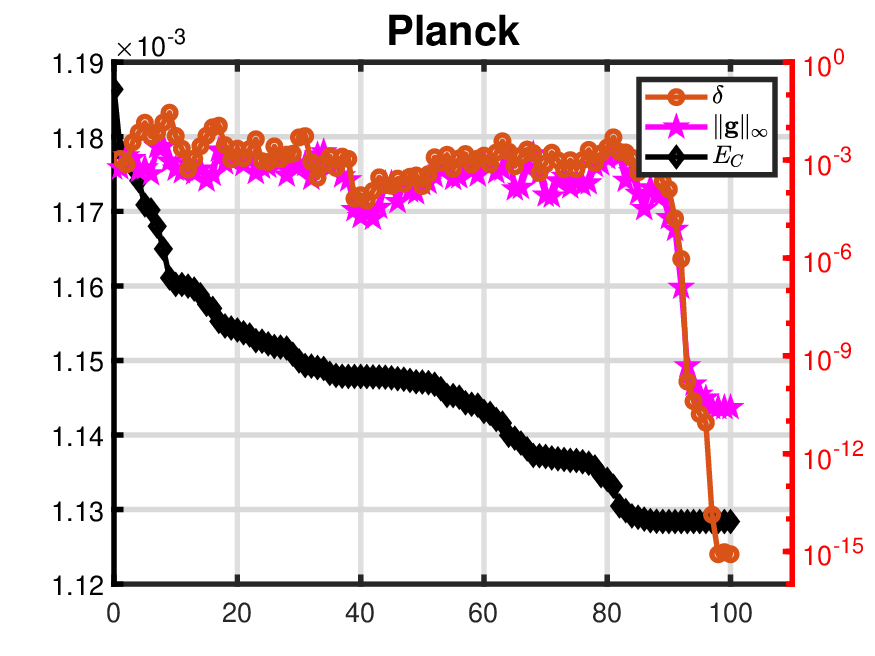} &
        \includegraphics[width = 0.24\textwidth]{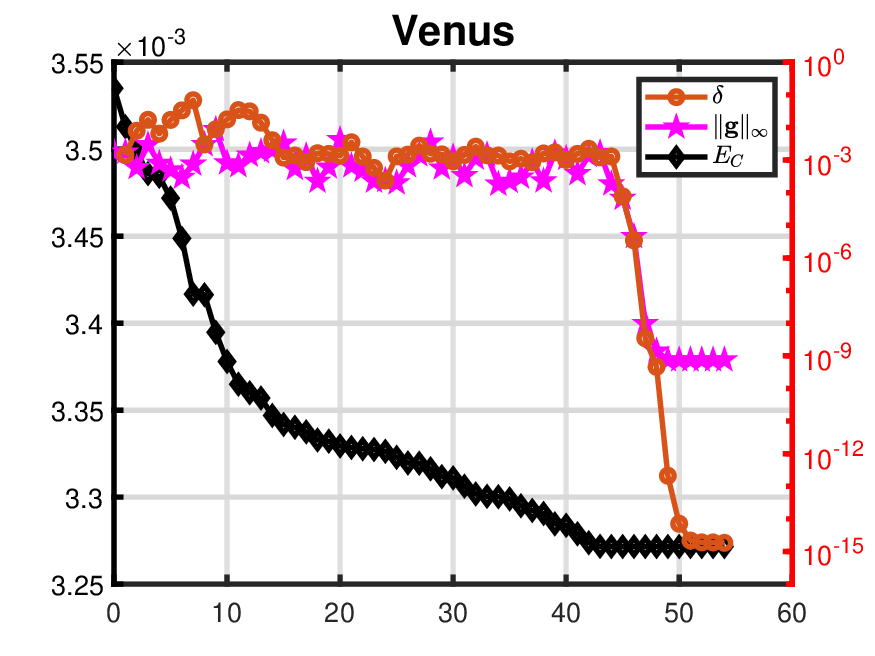} \\
\end{tabular}
}
\caption{The relationship between the number of iterations $k$ and the conformal energy $E_C^{(k)}$, the infinity norm of gradient $\|\mathbf{g}^{(k)}\|_\infty$ and the error $\delta^{(k)}$ by \Cref{alg:TR}. The $x$-axes represent the number of iterations. The left $y$-axes represent the conformal energies, and the right $y$-axes represent infinity norms of gradient and errors, respectively.}
    \label{fig:fgx}
\end{figure}

\Cref{fig:HistogramAngle,fig:HistogramBC} present the histograms of absolute angle distortion (degree) of each angle, denoted as $|\alpha_{jk} - \alpha_{jk}(f)|$ for $v_i$ in $T_{ijk}$, and Beltrami coefficients $\mu$ \cite{PTKC15} of triangular faces, respectively.
If $|\mu| = 0$, the map $f$ is conformal.
The subplots in the upper right of each histogram are the front and back of the angle distortion distributions/Beltrami coefficient distributions on the resulting unit spheres, respectively.
One can see that most of angle distortions are less than $5$ degrees and most of Beltrami coefficients are less than $0.1$ as well, guaranteeing the conformal performance of the HBTR algorithm. Angle distortion has similar performance to the Beltrami coefficient. Furthermore, \Cref{fig:GCAE} shows the absolute value of the discrete Gauss curvature $|\kappa|$, the average angle distortion $\epsilon_\alpha$ and the average of the norm of the Beltrami coefficient $\overline{|\mu|}$ at each vertex for the models {\it Arnold}, {\it Brain}, {\it Fandisk} and {\it Horse}, denoted as
\begin{align*}
     \kappa(v_i) = & 2\pi - \sum_{\{j,k\}\in S_{\mathcal{E}}(i)} \alpha_{jk},\\
    \epsilon_\alpha(v_i) = & \frac{1}{|\mathcal{N}(i)|}\sum_{\{j,k\}\in S_{\mathcal{E}}(i)} |\alpha_{jk} - \alpha_{jk}(f)|,\\
    \overline{|\mu|}(v_i) = & \frac{1}{|\mathcal{N}(i)|}\sum_{T_{ijk}\ni i} |\mu(T_{ijk})|,
\end{align*}
where $|\mathcal{N}(i)|$ is the number of adjacent vertices of $v_i$ and $\mu(T_{ijk})$ is the Beltrami coefficient on triangle $T_{ijk}$. We approximately find $\overline{|\mu|} \propto \epsilon_\alpha \propto \sqrt{|\kappa|}$ in these $4$ models except {\it Brain}, which approximately satisfies $\overline{|\mu|} \propto \epsilon_\alpha \propto |\kappa|$. The figures are plotted according to the relationships.
The textures illustrate the high similarity of high curvatures, large angle distortions and large Beltrami coefficient distributions. In other words, the large angle distortion and the large Beltrami coefficient regions are mainly at those with high curvatures, such as the ears, eyes and nose of {\it Arnold}, corners of {\it Fandisk}, and ears and legs of {\it Horse}. The small angle distortion and small Beltrami coefficient regions are roughly at those with low curvature. Therefore, the HBTR performs relatively poorly at vertices with high curvature, which is an issue in our future work. Additionally, the angle distortion and the Beltrami coefficient have diffusion trends. Taking {\it Fandisk} as an example, high-curvature regions are on corners and edges, and the curvatures in other regions are mostly $0$. Furthermore, the angle distortions and the Beltrami coefficients diffuse from the corners to the adjacent regions gradually. These phenomena also occur in the other models, which are not shown.

\begin{figure}[thp]
    \centering
\resizebox{\textwidth}{!}{
\begin{tabular}{c@{}c@{}c@{}c@{}l}
        \includegraphics[width = 0.24\textwidth]{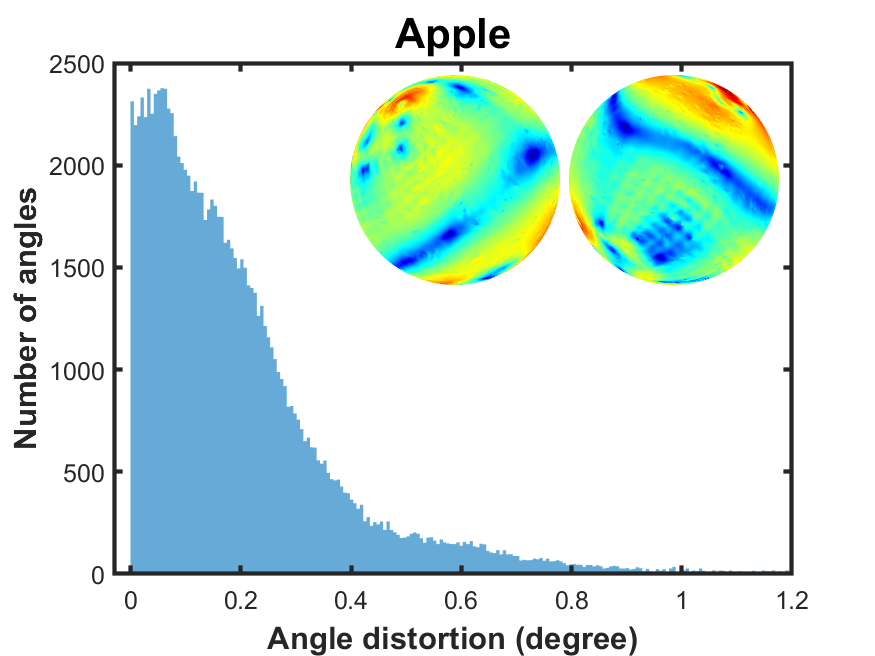} &
        \includegraphics[width = 0.24\textwidth]{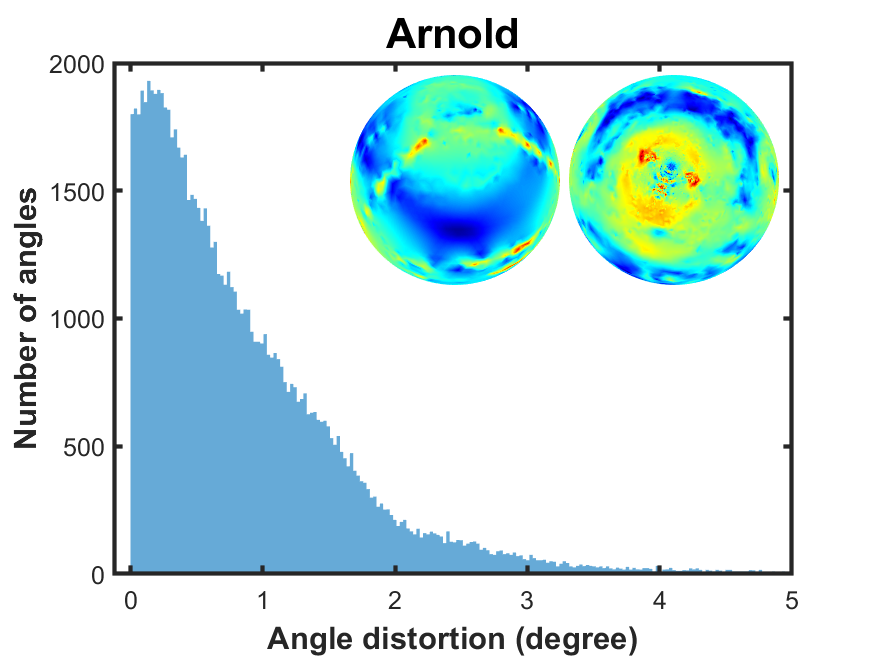} &
        \includegraphics[width = 0.24\textwidth]{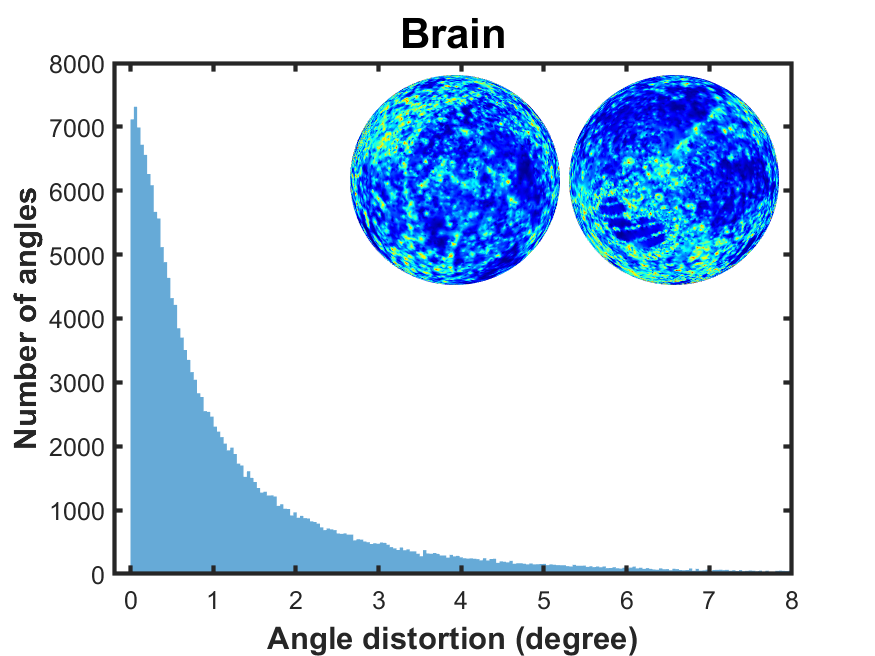} &
        \includegraphics[width = 0.24\textwidth]{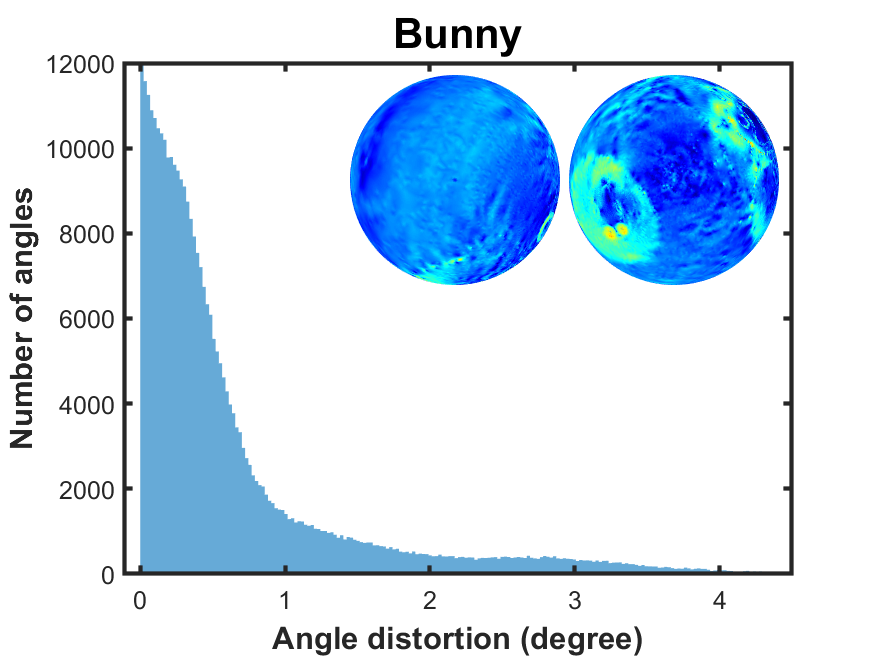} &
\multirow[t]{2}{*}[-2.5cm]{\includegraphics[clip, trim=12cm 0cm 0cm 0cm,width = 0.07\textwidth]{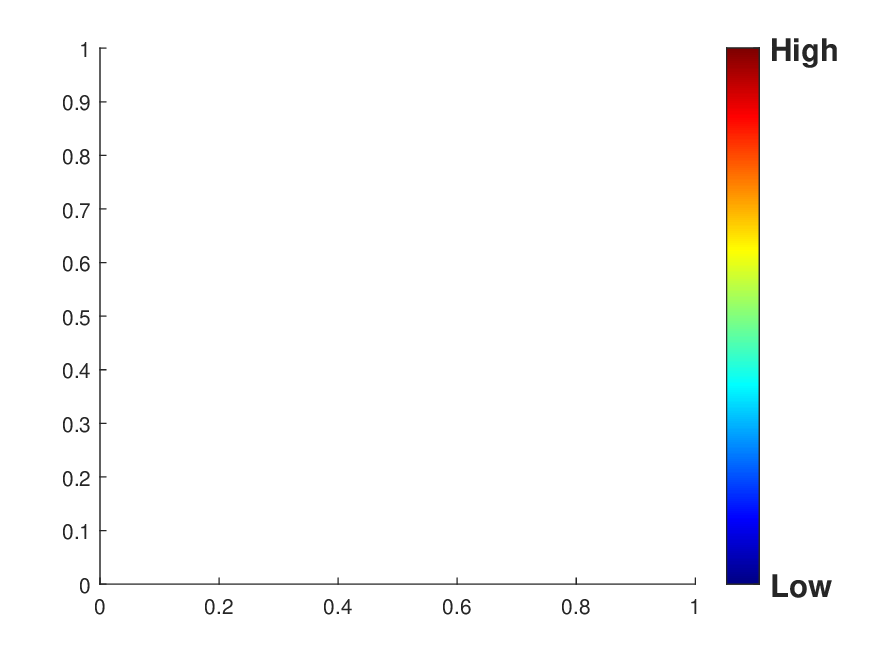}}\\
        \includegraphics[width = 0.24\textwidth]{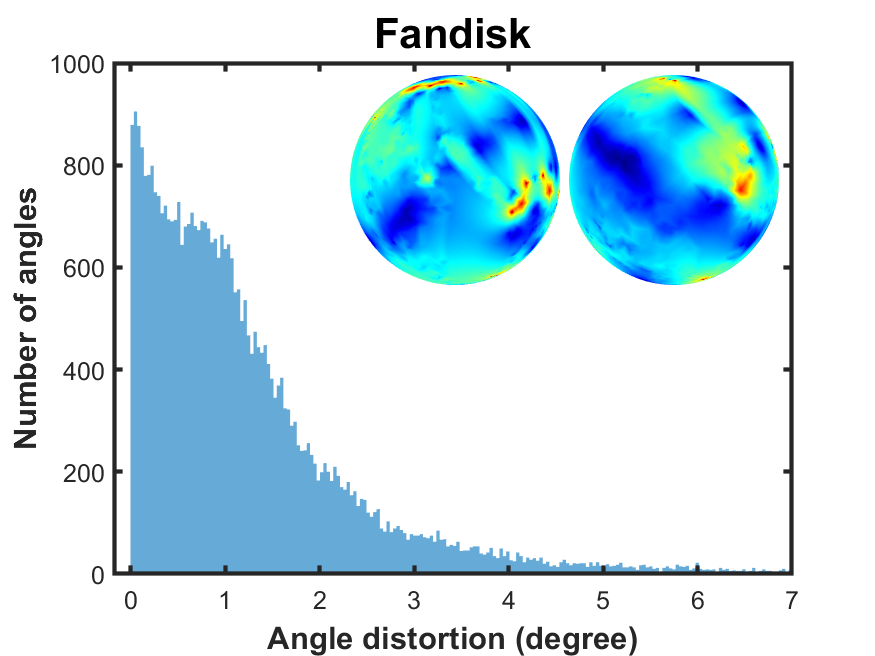} &
        \includegraphics[width = 0.24\textwidth]{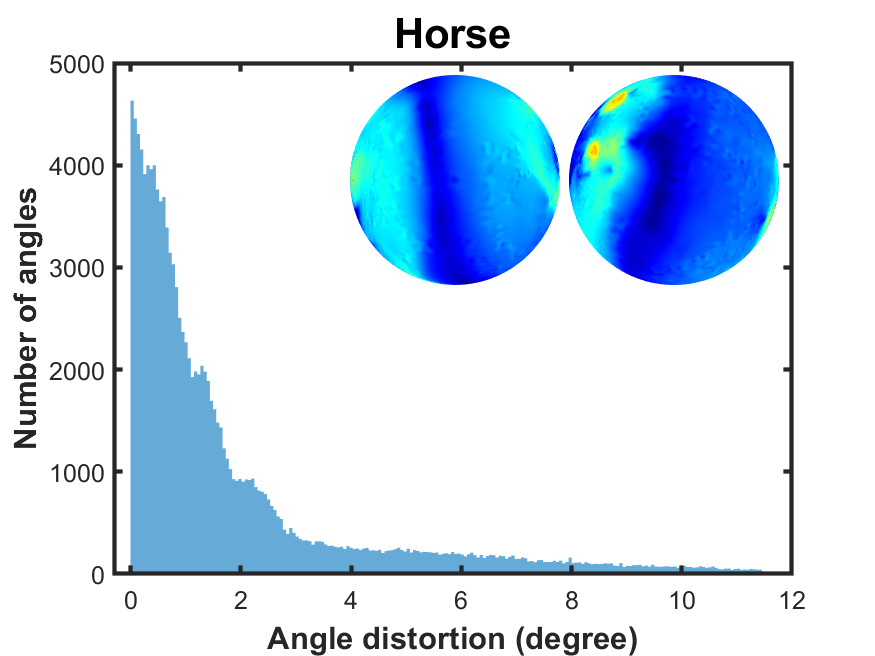} &
        \includegraphics[width = 0.24\textwidth]{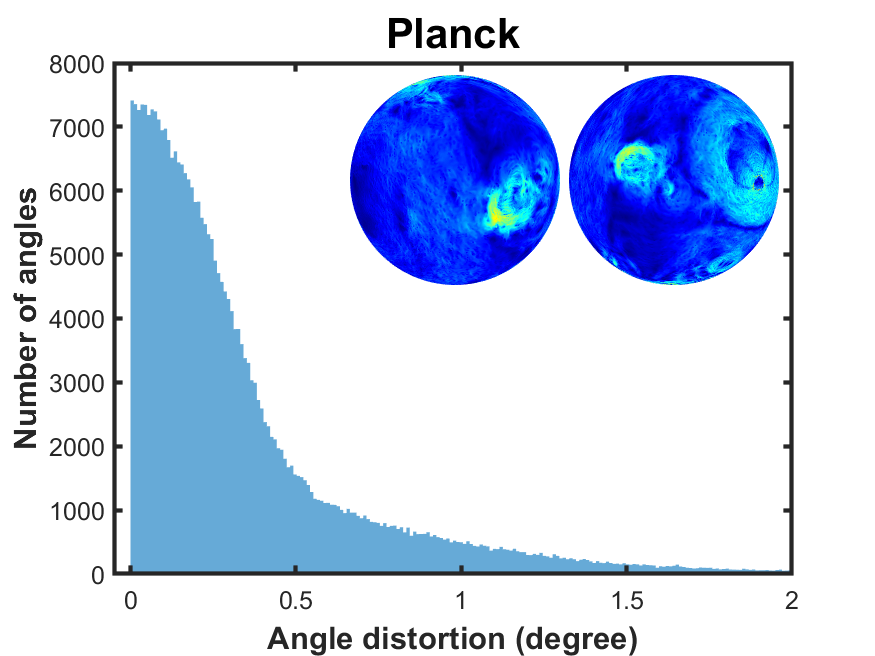} &
        \includegraphics[width = 0.24\textwidth]{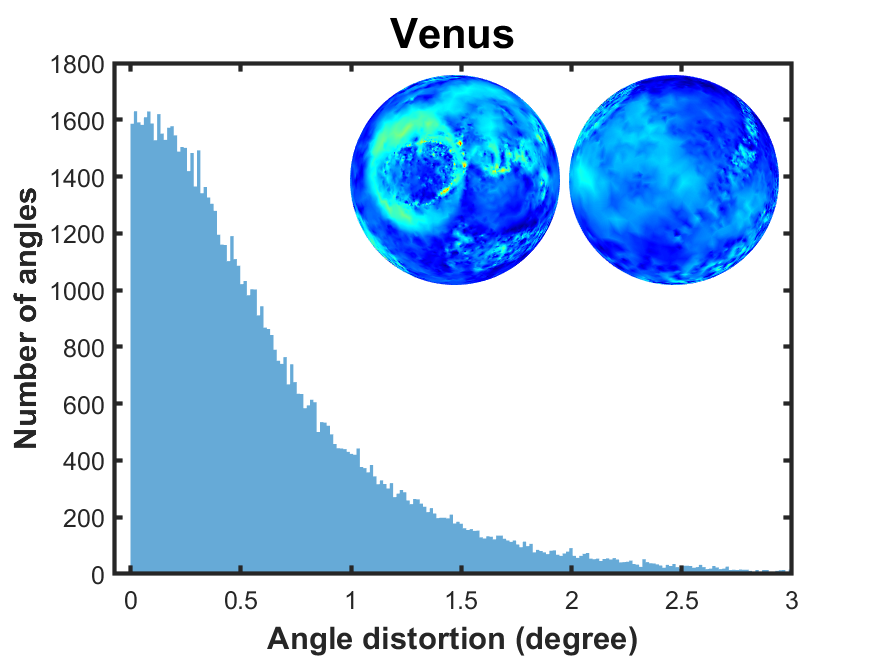}
\end{tabular}
}
\caption{Histograms of angle distortions on triangulation models.}
    \label{fig:HistogramAngle}
\end{figure}

\begin{figure}[thp]
    \centering
\resizebox{\textwidth}{!}{
\begin{tabular}{c@{}c@{}c@{}c@{}l}
        \includegraphics[width = 0.24\textwidth]{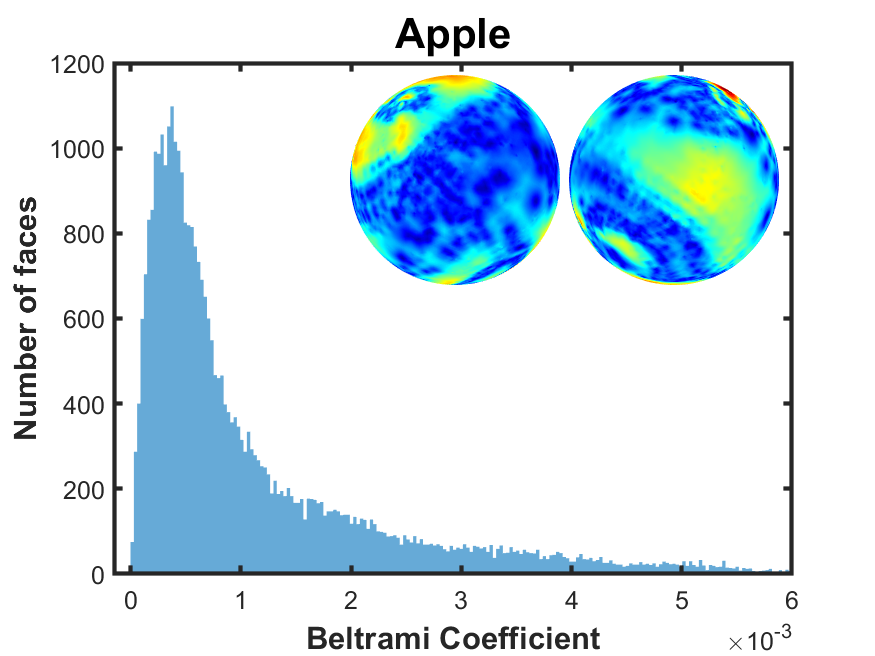} &
        \includegraphics[width = 0.24\textwidth]{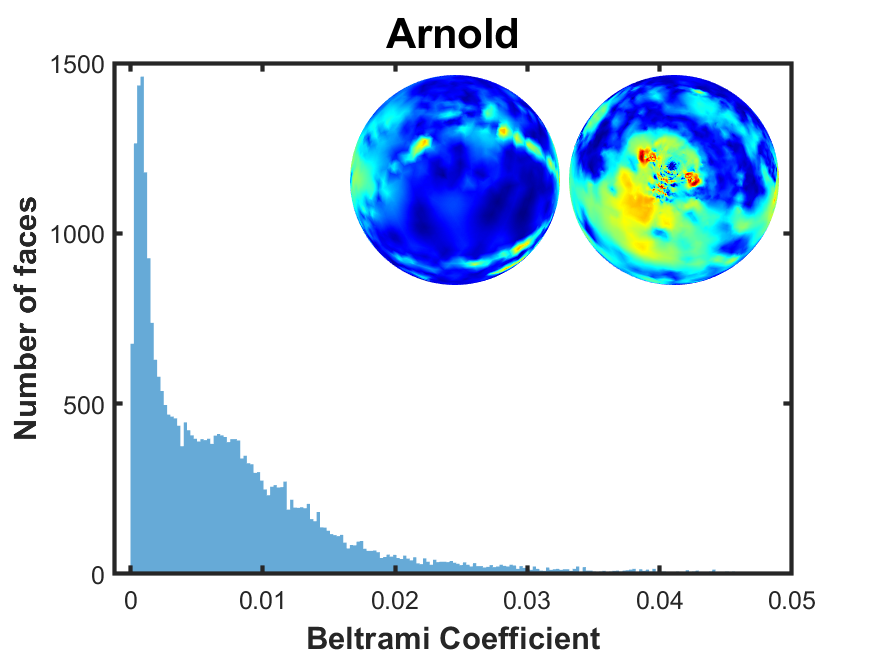} &
        \includegraphics[width = 0.24\textwidth]{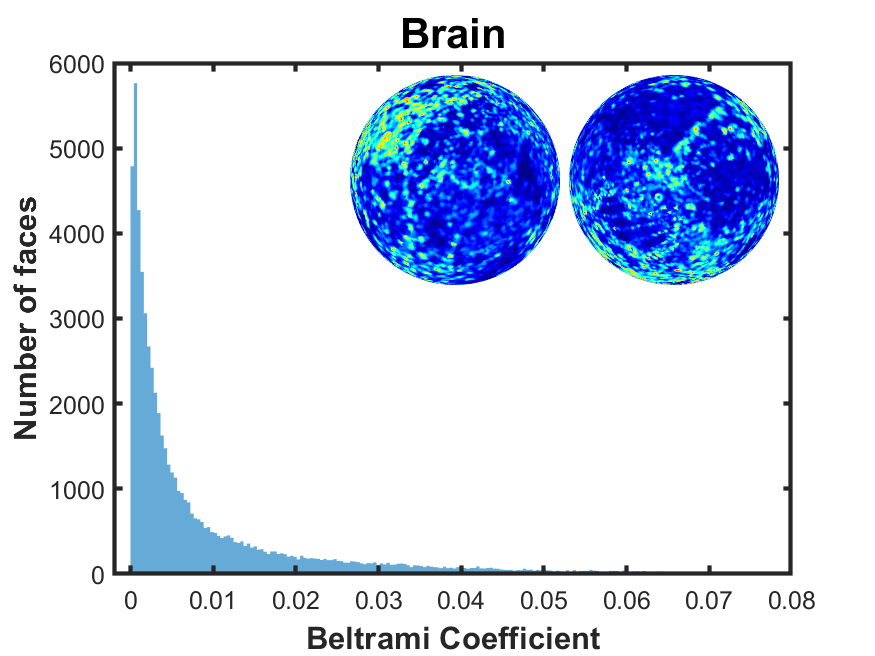} &
        \includegraphics[width = 0.24\textwidth]{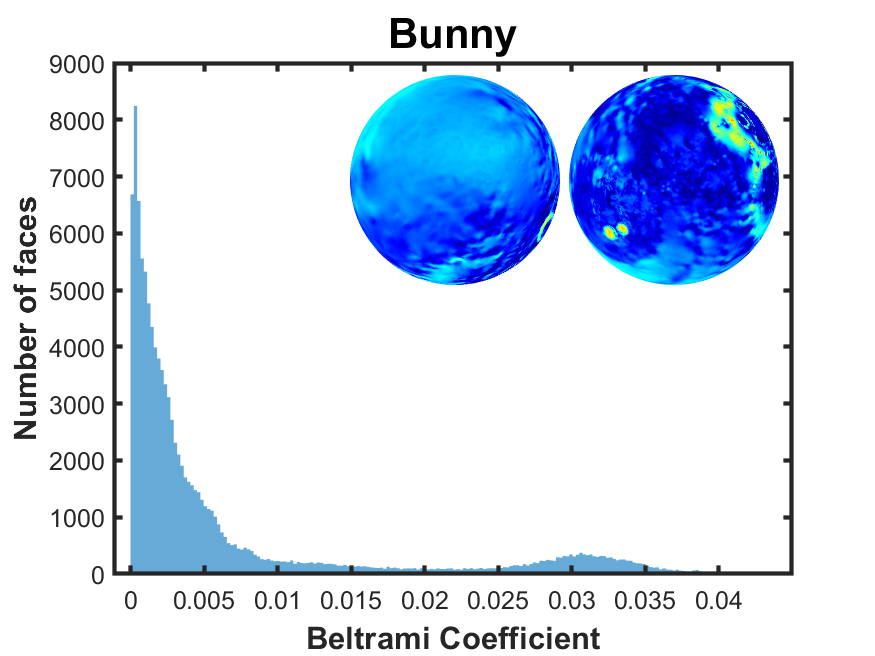} &
\multirow[t]{2}{*}[-2.5cm]{\includegraphics[clip, trim=12cm 0cm 0cm 0cm,width = 0.07\textwidth]{images/colorbar.eps}}\\
        \includegraphics[width = 0.24\textwidth]{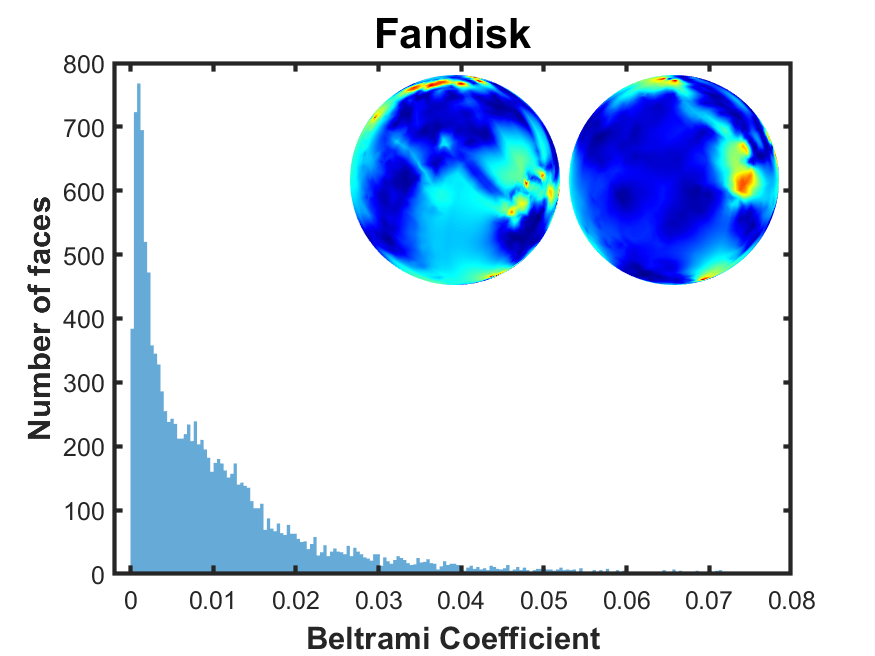} &
        \includegraphics[width = 0.24\textwidth]{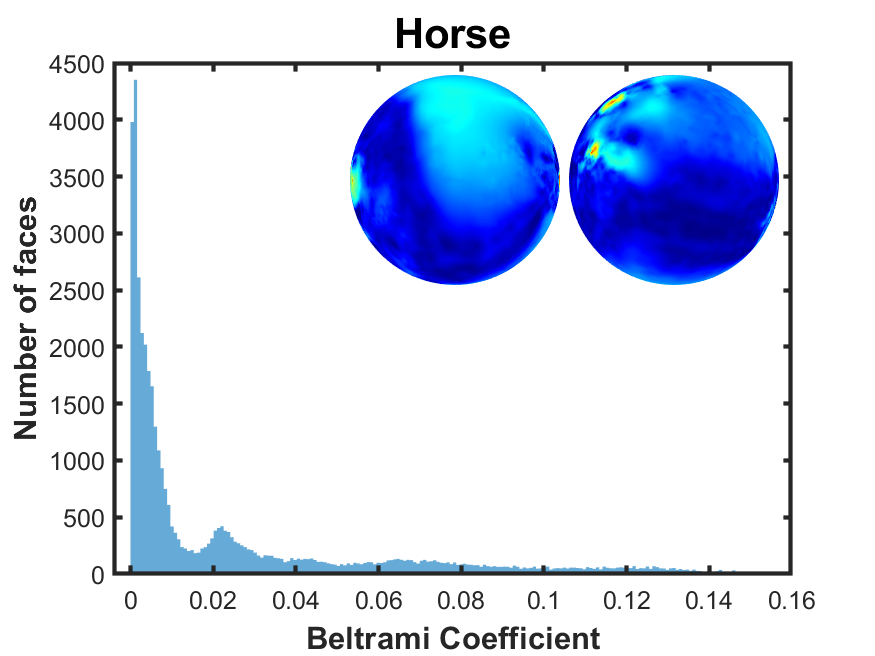} &
        \includegraphics[width = 0.24\textwidth]{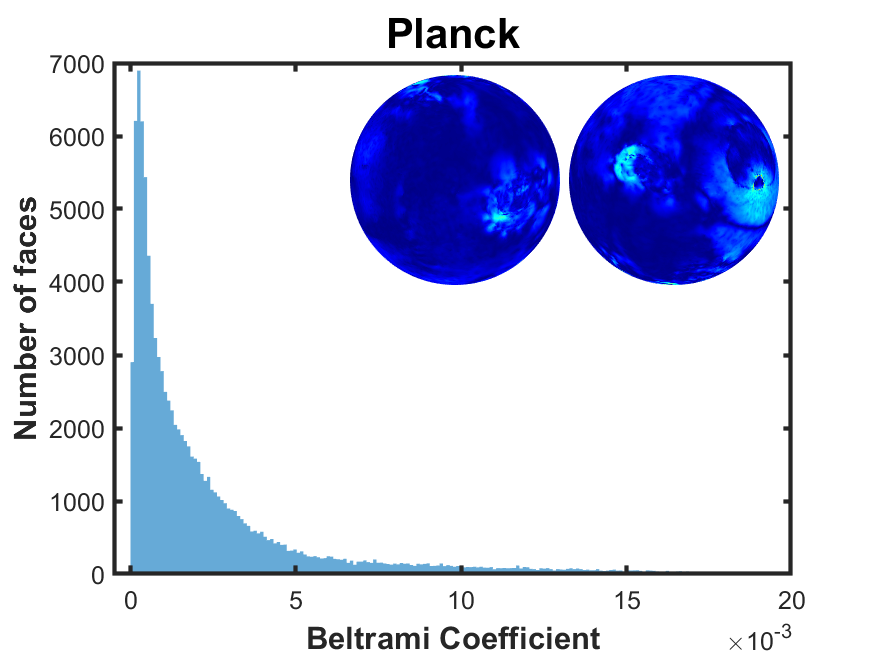} &
        \includegraphics[width = 0.24\textwidth]{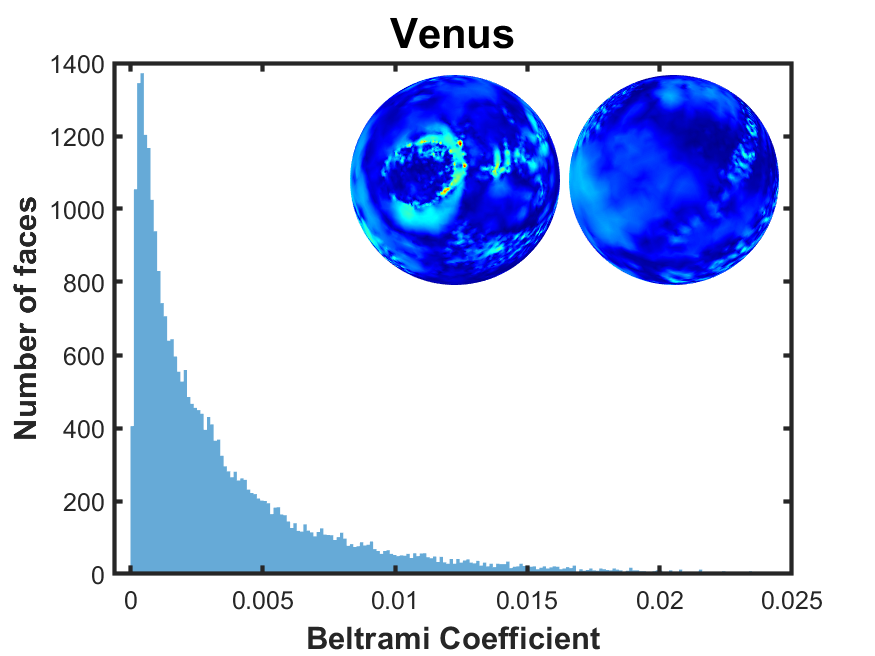}
\end{tabular}
}
\caption{Histograms of the Beltrami coefficients in the triangulation models.}
    \label{fig:HistogramBC}
\end{figure}

\begin{figure}[htp]
    \centering
\begin{tabular}{ccccl}
        \includegraphics[clip,trim = {4.5cm 1.25cm 4cm 0.5cm},height = 3.5cm]{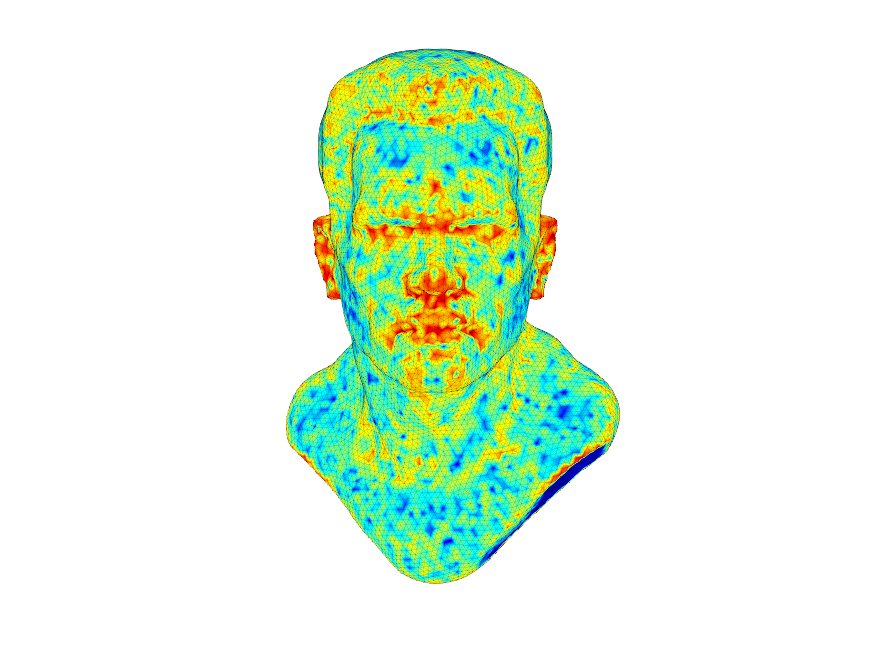} &
        \includegraphics[clip,trim = {3cm 2cm 2.5cm 1.5cm},width = 0.2\textwidth]{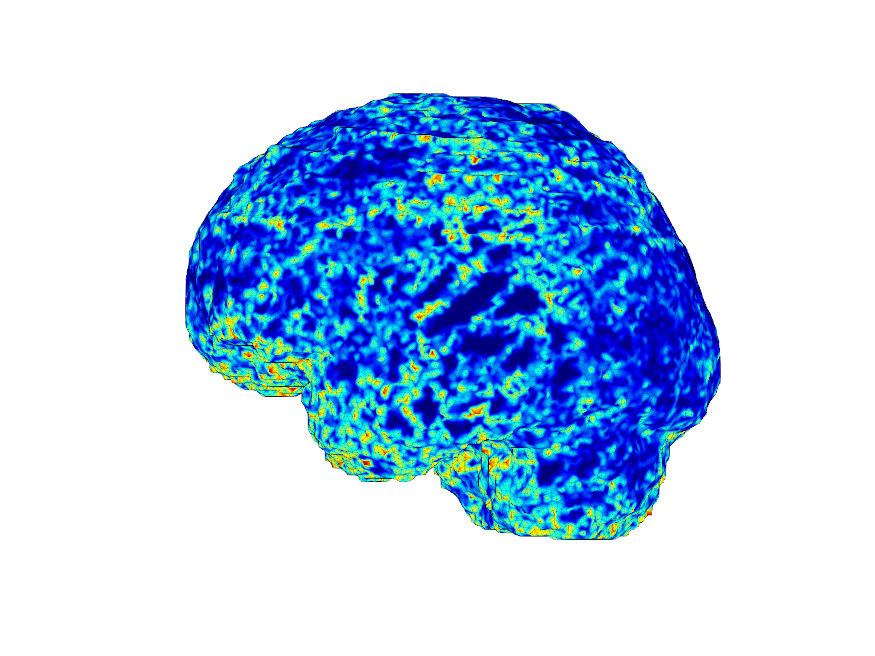} &
        \includegraphics[clip,trim = {3.5cm 2.25cm 2cm 2cm},width = 0.2\textwidth]{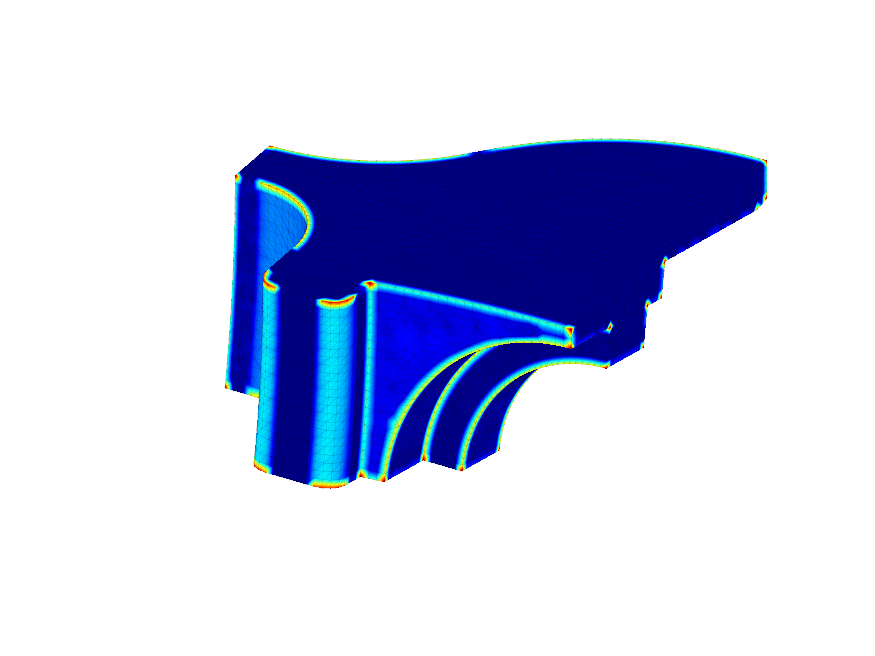} &
        \includegraphics[clip,trim = {4cm 1.25cm 4.5cm 0.5cm},height = 3.5cm]{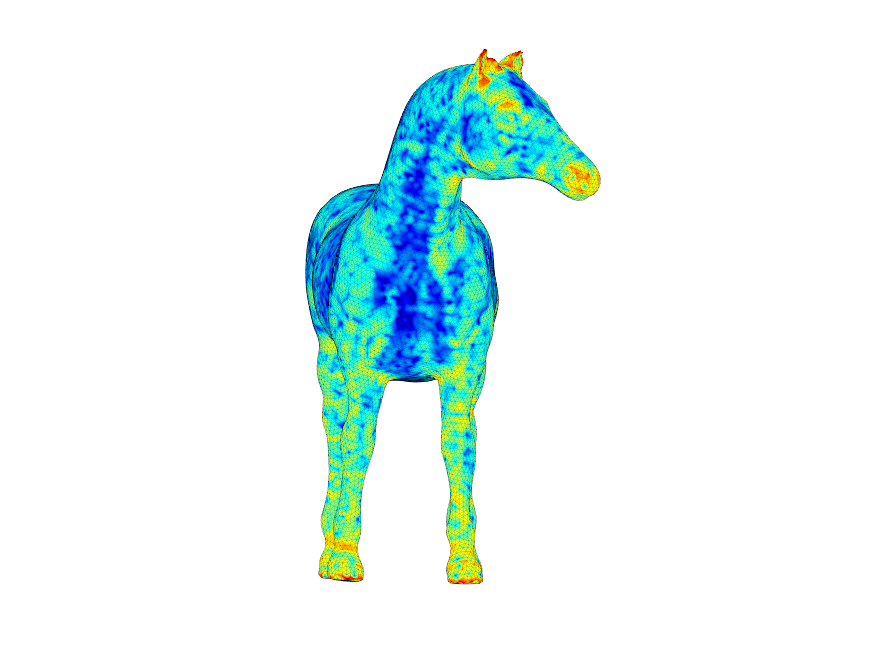} &
\multirow[t]{3}{*}[-6cm]{\includegraphics[clip, trim=12cm 0cm 0cm 0cm,width = 0.1\textwidth]{images/colorbar.eps}}\\
\multicolumn{4}{c}{(a) Gauss curvatures}\\
        \includegraphics[clip,trim = {4.5cm 1.25cm 4cm 0.5cm},height = 3.5cm]{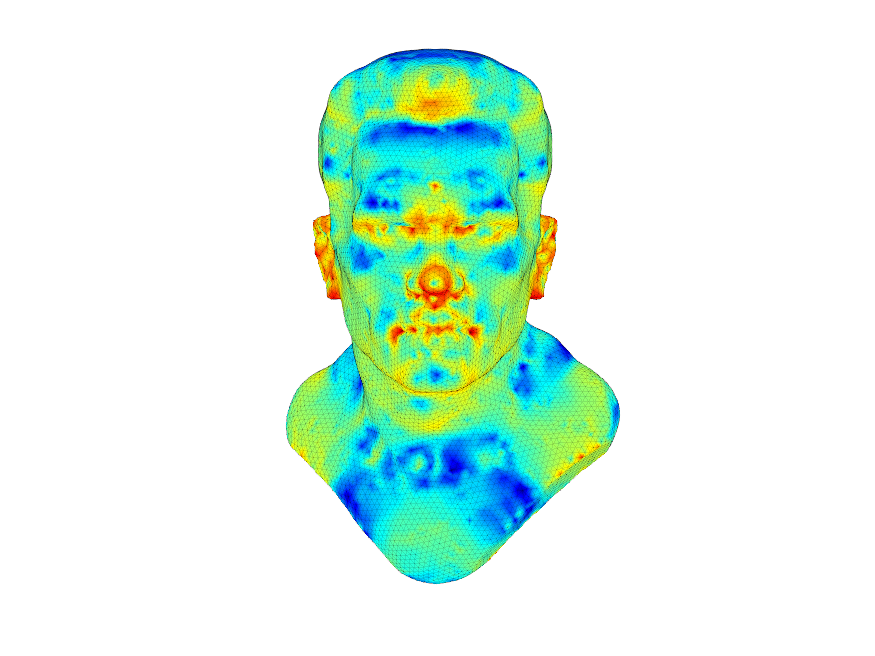} &
        \includegraphics[clip,trim = {3cm 2cm 2.5cm 1.5cm},width = 0.2\textwidth]{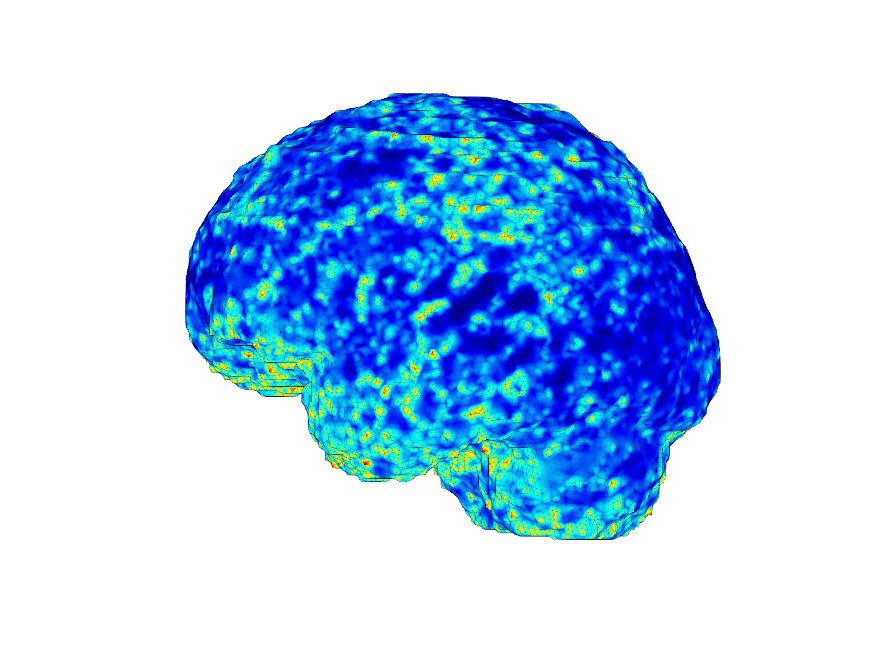} &
        \includegraphics[clip,trim = {3.5cm 2.25cm 2cm 2cm},width = 0.2\textwidth]{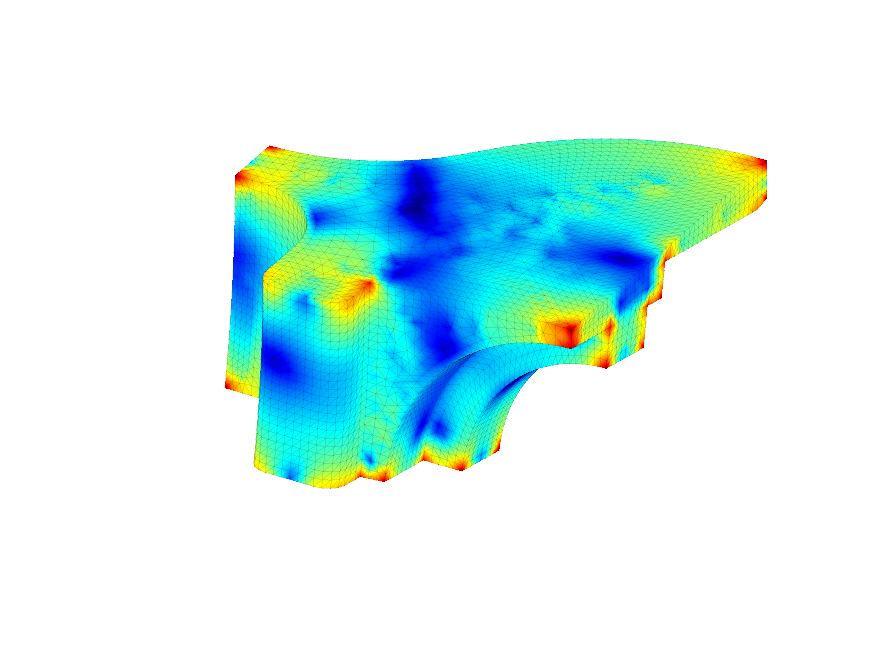} &
        \includegraphics[clip,trim = {4cm 1.25cm 4.5cm 0.5cm},height = 3.5cm]{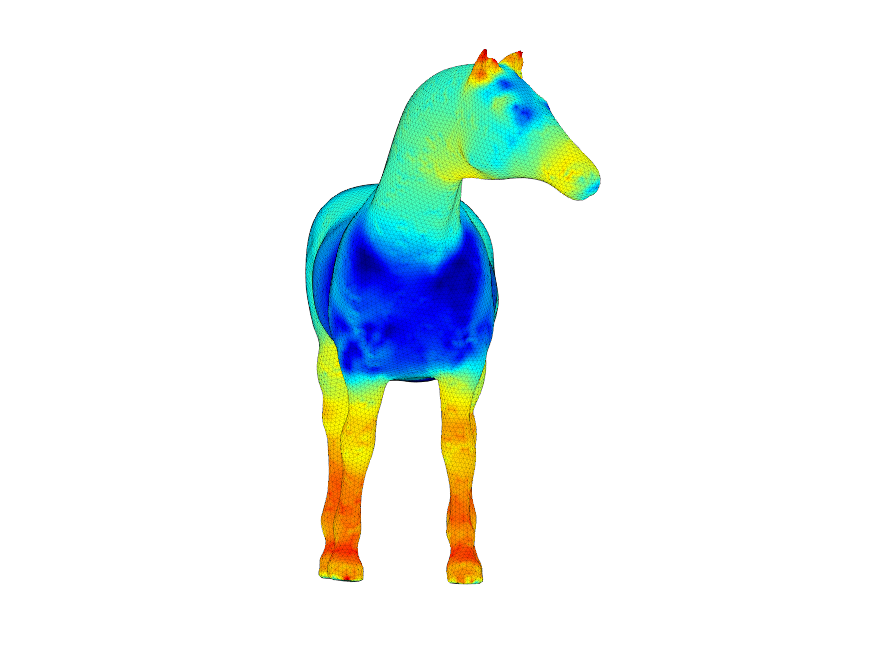} & \\
\multicolumn{4}{c}{(b) Angle distortions}\\
        \includegraphics[clip,trim = {4.5cm 1.25cm 4cm 0.5cm},height = 3.5cm]{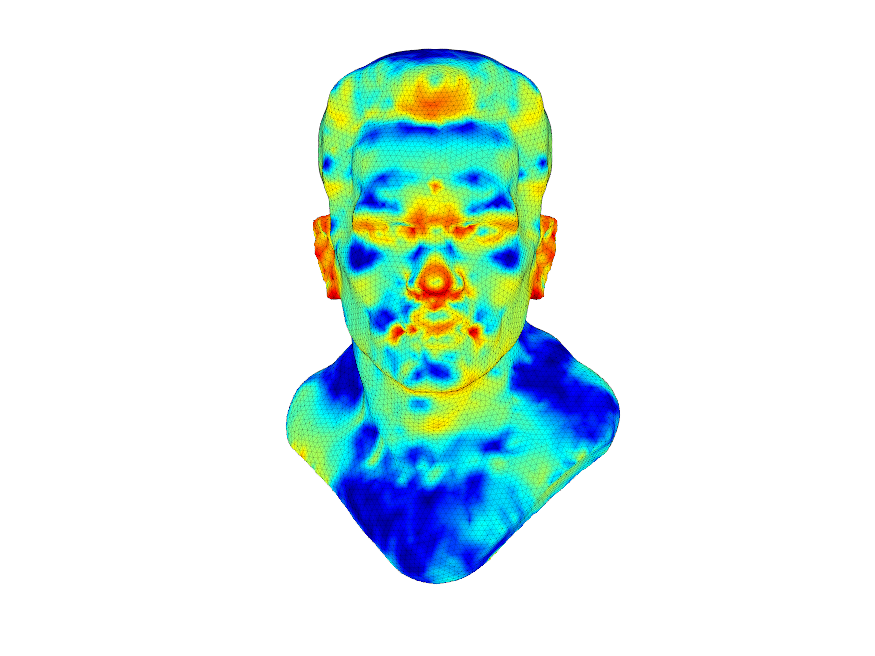} &
        \includegraphics[clip,trim = {3cm 2cm 2.5cm 1.5cm},width = 0.2\textwidth]{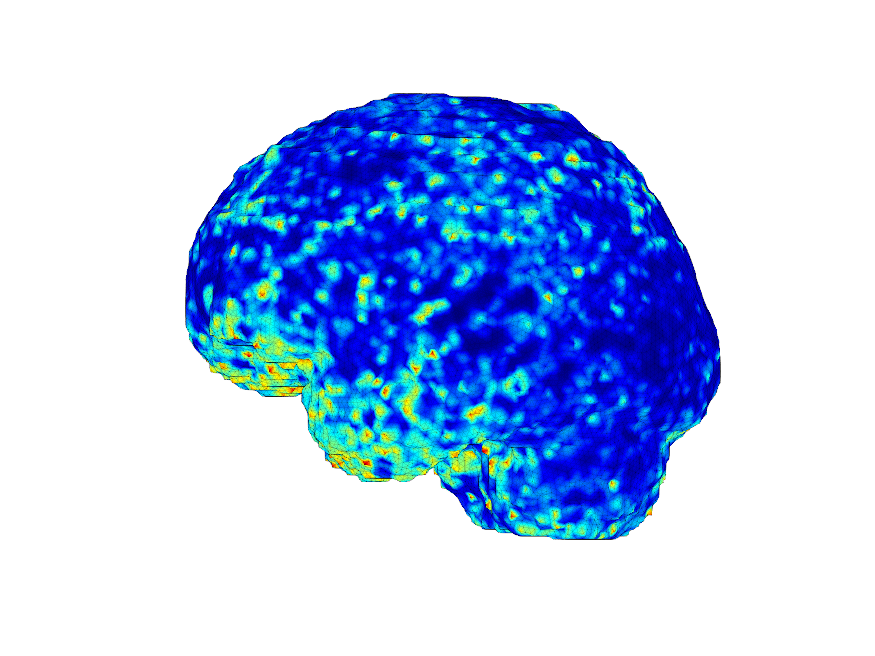} &
        \includegraphics[clip,trim = {3.5cm 2.25cm 2cm 2cm},width = 0.2\textwidth]{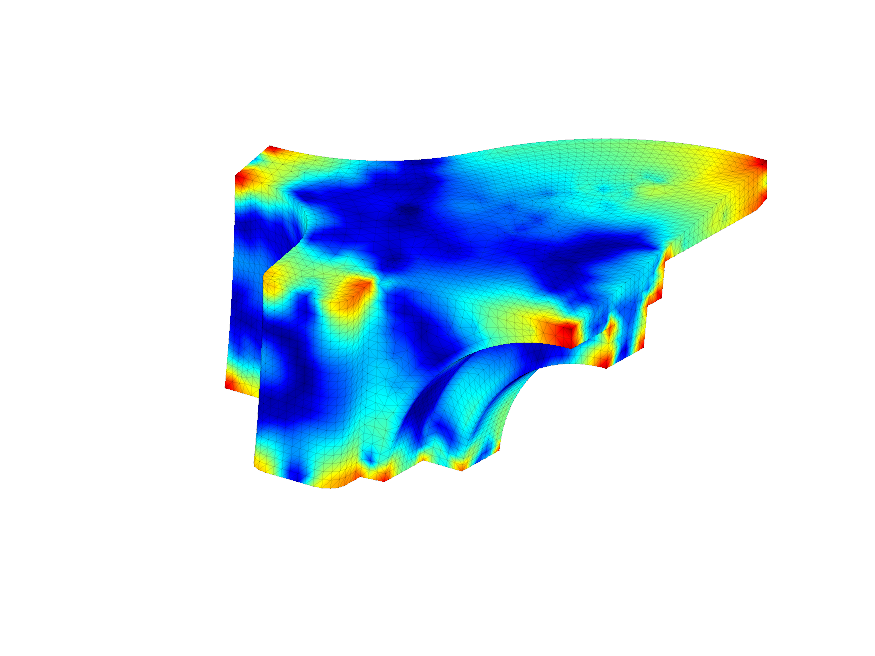} &
        \includegraphics[clip,trim = {4cm 1.25cm 4.5cm 0.5cm},height = 3.5cm]{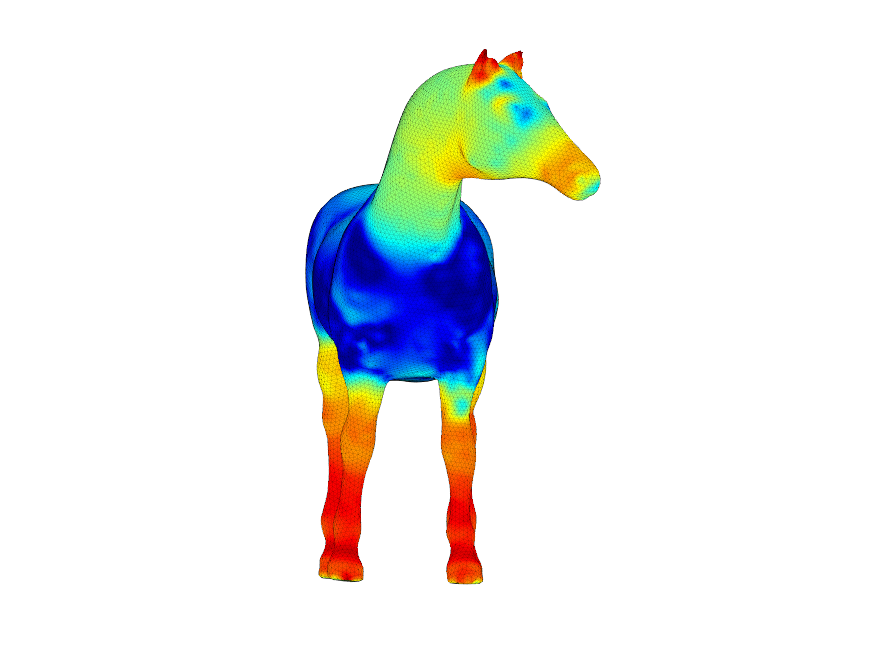} \\
\multicolumn{4}{c}{(c) Beltrami coefficients}
\end{tabular}
\caption{(a) Gauss curvature distributions (top), (b) angle distortion distributions, and (c) Beltrami coefficient distributions (bottom) for models {\it Arnold}, {\it Brain}, {\it Fandisk} and {\it Horse}.}
    \label{fig:GCAE}
\end{figure}

\subsection{Comparison with state-of-the-art algorithms}

In this subsection, we compare the accuracy of the proposed HBTR algorithm with that of two state-of-the-art algorithms for spherical conformal parameterization, namely, FLASH \cite{PTKC15} and the SCEM \cite{MHTL19}. The algorithm FLASH, simply put, via the stereographic projection, applies the composition of two quasi-conformal maps to construct the ideal conformal map, which is not an iterative algorithm. The MATLAB program of FLASH is obtained from Choi's website \cite{Choi}. The SCEM algorithm, as mentioned in \Cref{sec:conformal}, adopts the north-south hemisphere alternating iteration to compute the conformal map. The maximum number of iterations of the HBTR and SCEM is $500$. The loop termination condition of SCEM is that the difference between the conformal energies of two consecutive iteration steps is less than $10^{-9}$. For the HBTR algorithm, we set $\varepsilon = 10^{-9}$, as in \Cref{fig:fgx}, which guarantees convergence.

\Cref{tab:comparison} shows the comparison of conformal energies between FLASH, SCEM and HBTR. We can see that the conformal energies by HBTR are smallest among all models. In the view of angle distortions, it is observed that the HBTR algorithm has well performance for $50$-th percentile and $75$-th percentile as FLASH and SCEM. In addition, among the $8$ testing examples, FLASH, SCEM and HBTR did not produce foldings for spherical conformal maps.

\begin{table}[thp]
    \centering
    \resizebox{\textwidth}{!}{
    \begin{tabular}{|c|ccc|ccc|ccc|}
    \hline
    \multirow{4}{*}{Mesh} &
    \multicolumn{3}{c|}{\multirow{2}{*}{Conformal Energy}} &
    \multicolumn{6}{c|}{Angle Distortion} \\  \cline{5-10}
    &&&&
    \multicolumn{3}{c|}{$50$-th percentile} &
    \multicolumn{3}{c|}{$75$-th percentile} \\ \cline{2-10}
    &
    \multirow{2}{*}{FLASH\cite{PTKC15}} &
    \multirow{2}{*}{SCEM\cite{MHTL19}} &
    \multirow{2}{*}{HBTR} &
    \multirow{2}{*}{FLASH\cite{PTKC15}} &
    \multirow{2}{*}{SCEM\cite{MHTL19}} &
    \multirow{2}{*}{HBTR} &
    \multirow{2}{*}{FLASH\cite{PTKC15}} &
    \multirow{2}{*}{SCEM\cite{MHTL19}} &
    \multirow{2}{*}{HBTR} \\
    & & & & & & & & & \\
    \hline
    Apple  & 3.74e-04 & 3.49e-04 & \bf{3.43e-04} & 0.162 & 0.156 & \bf{0.153} & 0.280 & 0.268 & \bf{0.264} \\
    Arnold  & 7.82e-03 & 9.07e-03 & \bf{4.32e-03} & 0.676 & 0.675 & \bf{0.659} & 1.232 & \bf{1.173} & 1.233 \\
    Brain  & 2.69e-02 & 2.36e-02 & \bf{2.33e-02} & 0.821 & \bf{0.729} & \bf{0.729} & 1.849 & \bf{1.694} & \bf{1.694} \\
    Bunny   & 1.93e-03 & 1.90e-03 & \bf{1.85e-03} & 0.343 & \bf{0.339} & 0.363 & 0.759 & 0.746 & \bf{0.735} \\
    Fandisk    & 3.50e-02 & 1.39e-02 & \bf{1.34e-02} & 1.071 & \bf{0.936} & \bf{0.936} & 1.974 & 1.646 & \bf{1.640} \\
    Horse   & 4.58e-03 & 4.30e-03 & \bf{4.18e-03} & 1.027 & 1.020 & \bf{1.010} & 2.292 & \bf{2.283} & \bf{2.283} \\
    Planck   & 1.71e-03 & 1.19e-03 & \bf{1.13e-03} & 0.311 & 0.245 & \bf{0.231} & 0.578 & 0.457 & \bf{0.446} \\
    Venus   & 6.66e-03 & 3.54e-03 & \bf{3.27e-03} & 0.666 & 0.453 & \bf{0.442} & 1.186 & 0.831 & \bf{0.821} \\
    \hline
    \end{tabular}
    }
    \caption{Comparison of conformal energies and angle distortions between FLASH, SCEM and HBTR. }
    \label{tab:comparison}
\end{table}

\subsection{Convergence behavior of the discrete scheme} \label{subsec:convdiscrete}
In this subsection, we check the numerical convergence of the discrete conformal energy of the resulting map to the continuous energy in \eqref{def: continuous conformal energy}.
We consider two ellipsoids with semiaxis lengths of $(1.1,1,0.9)$ and $(2.0,1,0.3)$ and generate triangular meshes with different resolutions. The basic information of the meshes is in \Cref{tab:ellip}, where $h$ represents the maximum diameter of triangles in the mesh.
Then, we use FLASH, SCEM and HBTR to compute conformal parameterizations. The conformal energies, means and SDs of angle distortion are used to measure the conformal distortion of the algorithms. \Cref{fig:ellip} shows the relationship between the measurements and $h$. The $x$-axis represents $h$ and $y$-axis represents the conformal energy, mean and SD of angle distortion, respectively.
We can see that the conformal energy, mean and SD do not stably decrease as $h$ decreases for FLASH, while $E_C = O(h^2)$ and angle distortions are linearly related to $h$ for SCEM and HBTR. Specifically, as $h\to \frac{1}{2} h$, $E_C \to \frac{1}{4} E_C$, the means and SDs are reduced by half. Therefore, SCEM and HBTR are robust in the respective of convergence of the discrete scheme.
The conformal energies, means and SDs are lowest for HBTR compared with those of FLASH and SCEM, demonstrating the robustness and accuracy of HBTR.

\begin{table}[h]
\centering
\begin{tabular}{c|c|cccccc}
\hline
    \multicolumn{2}{c|}{$\#V$} & 642 & 2562 & 10242 & 40962  & 163842 & 655362 \\
    \hline
    \multicolumn{2}{c|}{$\#F$} & 1280 & 5120 & 20480 & 81920 & 327680 & 1310720 \\
    \hline
    \multirow{2}{*}{$h$} & $(1.1,1,0.9)$ & 0.1796 & 0.0901 & 0.0451 & 0.0226 & 0.0113 & 0.0056 \\    \cline{2-8}
    & $(2.0,1,0.3)$ & 0.3192 & 0.1607 & 0.0806 & 0.0403 & 0.0202 & 0.0101 \\
 \hline
\end{tabular}
\caption{The ellipsoids meshes for checking the convergence of the discrete scheme.}
\label{tab:ellip}
\end{table}


\begin{figure}
    \centering
\subfloat[Conformal energy $(1.1,1,0.9)$]{
    \includegraphics[clip,trim = {0.8cm 0cm 1cm 0.5cm},width = 0.3\textwidth]{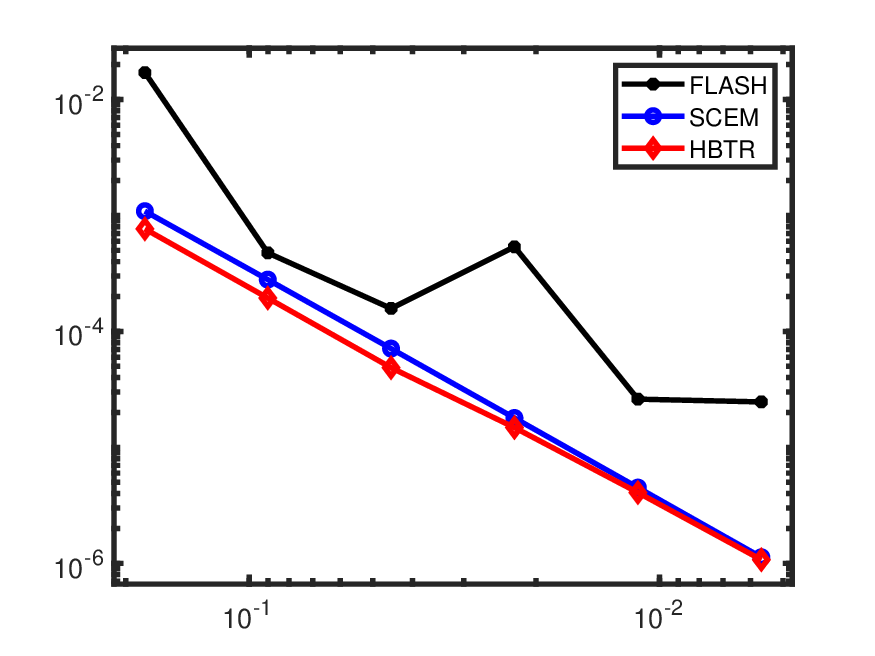}
}
\subfloat[Mean $(1.1,1,0.9)$]{
    \includegraphics[clip,trim = {0.8cm 0cm 1cm 0.5cm},width = 0.3\textwidth]{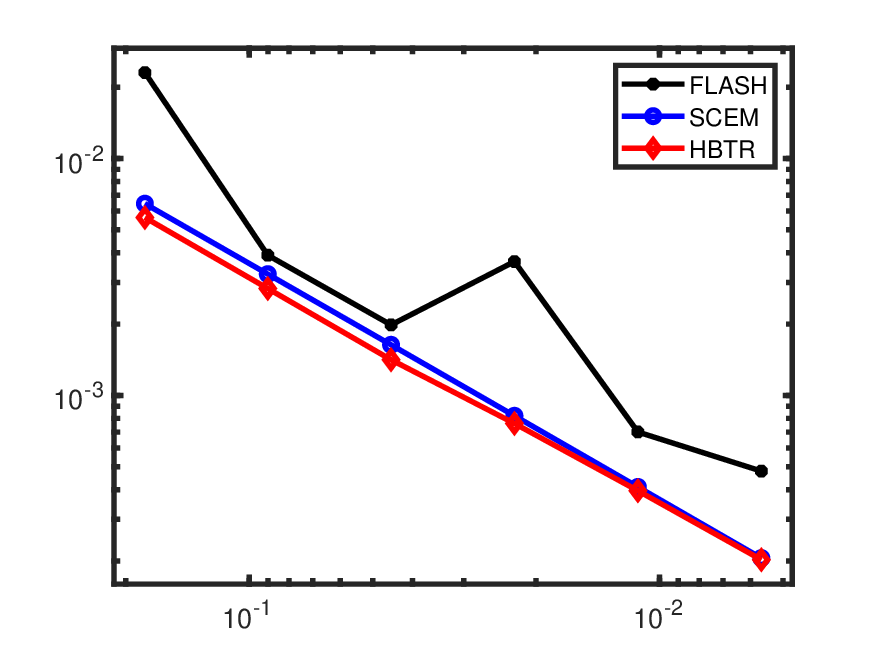}
}
\subfloat[Standard deviation $(1.1,1,0.9)$]{
    \includegraphics[clip,trim = {0.8cm 0cm 1cm 0.5cm},width = 0.3\textwidth]{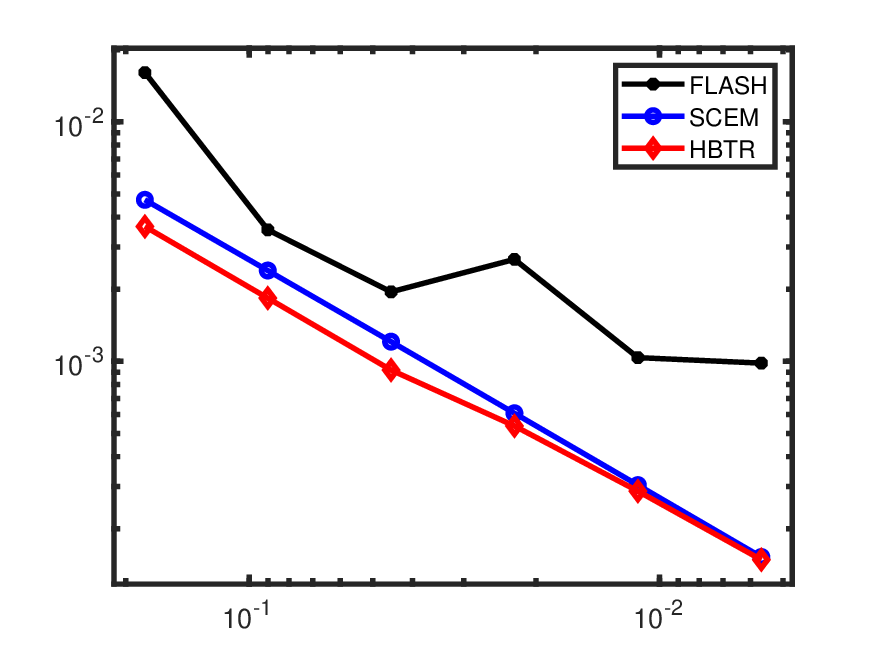}
} \\
\subfloat[Conformal energy $(2.0,1,0.3)$]{
    \includegraphics[clip,trim = {0.8cm 0cm 1cm 0.5cm},width = 0.3\textwidth]{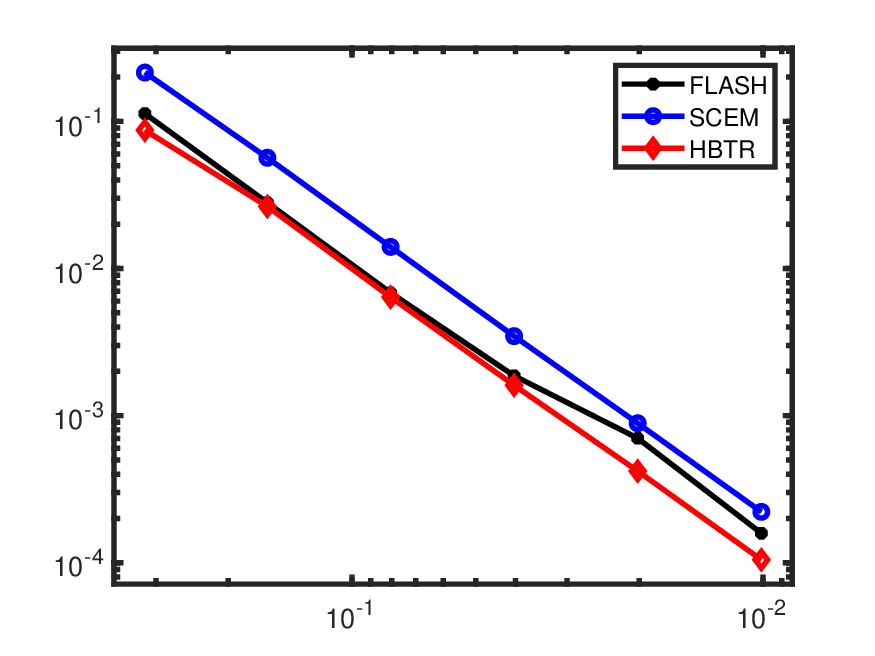}
}
\subfloat[Mean $(2.0,1,0.3)$]{
    \includegraphics[clip,trim = {0.8cm 0cm 1cm 0.5cm},width = 0.3\textwidth]{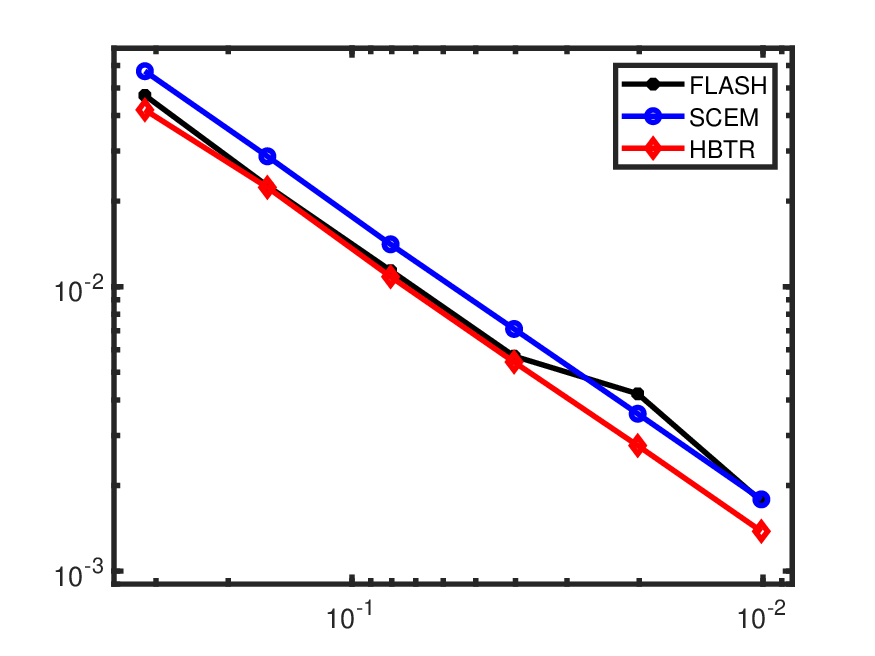}
}
\subfloat[Standard deviation $(2.0,1,0.3)$]{
    \includegraphics[clip,trim = {0.8cm 0cm 1cm 0.5cm},width = 0.3\textwidth]{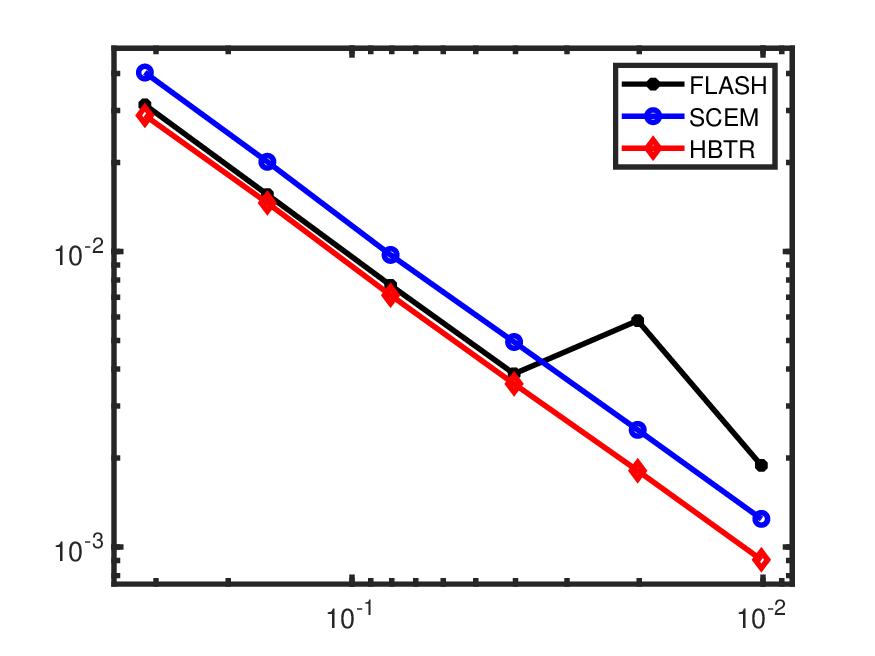}
}
\caption{The relationship between conformal energy, mean and standard deviation of angle distortion and $h$ of algorithms FLASH, SCEM and HBTR for $2$ ellipsoids. The figures from the top row to the bottom row are for ellipsoids $(1.1,1,0.9)$ and $(2.0,1,0.3)$, respectively.}
    \label{fig:ellip}
\end{figure}

\subsection{Removement of folding triangles}
HBTR does not necessarily guarantee the bijectivity of the resulting map; that is, folding triangles may occur in the image region $\mathbb{S}^2$. In this subsection, we apply a postprocessing method, named mean value coordinates \cite{Floa03}, to remove the folding triangles. Let $L_{MV}$ be a Laplacian matrix defined as
\begin{align} \label{def:LMVC}
    \big[L_{MV}\big]_{ij} = \begin{cases}
        -w_{MV,ij} & \text{if}~i\neq j,~[v_i,v_j] \in \mathcal{E}(M),\\
        \sum_{k\in \mathcal{N}(i)} w_{MV,ik} & \text{if}~i = j,\\
        0 & \text{if}~[v_i,v_j] \notin \mathcal{E}(M),
    \end{cases}
\end{align}
with
\[
w_{MV,ij} = \frac{\tan(\alpha_{jk}/2)+\tan(\alpha_{k'j}/2)}{\|v_{ij}\|},
\]
where $\alpha_{jk}$ and $\alpha_{k'j}$ are the angles opposite to vertex $v_i$ in triangles $T_{ijk}$ and $T_{k'ji}$, respectively, as shown in \Cref{subfig:OppositeAngle}. The postprocessing method is concluded in \Cref{alg:MVC}.

\begin{algorithm}[h]
\caption{Mean value coordinates for removing folding triangles}
\label{alg:MVC}
\begin{algorithmic}[1]
\REQUIRE Triangulation $M$ with vertices $\{v_i,i = 1,2,\cdots, n\}$, and $\mathbf{f} \in \mathbb{R}^{n\times 3}$ obtained by \Cref{alg:TR}.
    \ENSURE $\mathbf{f} \in \mathbb{R}^{n\times 3}$ inducing the conformal map $f$ as in \eqref{eq:bary} guaranteeing the bijectivity.
    \STATE Generate Laplacian matrix $L_{MV}$ defined in \eqref{def:LMVC}.
    \STATE Search the folding triangles in $f(M)$. Let $\mathtt{I} = \{i~|~T_{ijk} \text{ is a folding triangle.}\}$ be the index set of vertices contained in the folding triangles and $\mathtt{O} = \{1,2,\cdots,n\}\setminus \mathtt{I}$.
    \WHILE{$\mathtt{I} \neq \varnothing$}
    \STATE Select the face center of an unfolding triangle as north pole and perform the stereographic projection $\Pi(\mathbf{f}) \to \mathbf{h}$.
    \STATE Update the vertices $\mathbf{h}_{\mathtt{I}}$ by solving the linear system,
\begin{align}
        \big[L_{MV}\big]_{\mathtt{I}\mathtt{I}} \mathbf{h}_{\mathtt{I}} = -\big[L_{MV}\big]_{\mathtt{I}\mathtt{O}} \mathbf{h}_{\mathtt{O}}.
    \end{align}
    \STATE Perform the inverse stereographic projection $\Pi^{-1}(\mathbf{h}) \to \mathbf{f}$ and update $\mathtt{I}$ and $\mathtt{O}$.
    \ENDWHILE
\end{algorithmic}
\end{algorithm}
Unlike $L$ in conformal energy in \eqref{def:CE}, $L_{MV}$ is not symmetric. However, its weights $w_{MV}$ must be positive. Therefore, this approach can guarantee the bijectivity of the modified map. We present $2$ examples, the resulting maps of which by HBTR are not bijective. Then, we use \Cref{alg:MVC} to remove the folding triangles. \Cref{tab:MVC} shows the conformal energies, angle distortions and the number of folding triangles before and after the removal. The conformal energies decrease slightly, and the angle distortions are almost unchanged, while the folding triangles disappear.

\begin{table}[h]
    \centering
    \begin{tabular}{c|cc|c|c|c}
    \hline
        Mesh & $\# V$ & $\# F$ & Conformal energy & Mean of angle distortion & $\#$folding\\
    \hline
        Bimba & 502575 & 1005146 & 1.132e-3/~1.113e-3 & 5.328e-3/~5.328e-3 & 24/~0 \\
        RightBrain & 163842 & 327680 & 2.806e-2/~2.799e-2 & 2.252e-2/~2.252e-2 & 34/~0\\
     \hline
    \end{tabular}
    \caption{The result of postprocessing for removing the folding triangles. The left and right of '/' are the values before and after the postprocessing, respectively.}
    \label{tab:MVC}
\end{table}

\section{Application to surface registrations} \label{sec:registration}

Given a fixed surface ${M}_0$ and a series of moving surfaces ${{M}_t,t=1,2,\cdots}$, surface registration aims to find bijective maps from the moving surfaces to the fixed surface. It is broadly applied in computer vision and medical imaging. The goal of surface registration is to transform surfaces from different sources into one coordinate system. Therefore, the registration should ensure that the predominant features in the fixed surface correspond to those in the target surfaces, which are often expressed as landmarks in practical applications. It is generally not easy to manage the registration because of the complicated structure of surfaces. With the help of parameterization, we can transform the closed fixed surface ${M}_0$ into a unit sphere $\mathbb{S}^2$ via conformal map $f_0$ and then register the moving surfaces to the obtained unit sphere $\mathbb{S}^2$ via registration map $f_{reg,t}$. As a result, the map $\tilde{f}_{reg,t} = f_0^{-1} \circ {f}_{reg,t}$ is the registration map from ${M}_t$ to ${M}_0$. To obtain $f_{reg,t}$, we consider the optimization problem
\begin{align} \label{opt:reg}
    \min E_B(\boldsymbol{\theta},{\boldsymbol{\phi}}) := E_C(\boldsymbol{\theta},{\boldsymbol{\phi}}) + \lambda E_{reg}(\boldsymbol{\theta},{\boldsymbol{\phi}}),
\end{align}
where $E_{reg}$ is the registration loss and $\lambda$ is its parameter. The representation of $E_{reg}$ depends on the
expression of predominant features. The most common representation is the landmark-based registration
\begin{align} \label{eq:landmarkbase}
    E_{reg} = \frac{1}{2|S_L|}\sum_{i\in S_L} \|\mathbf{f}_{t,i} - \mathbf{f}_{0,i}\|_F^2, %
\end{align}
where $\mathbf{f}_{t,i}$ are the feature vertices on surface $M_t$ and $S_L$ and $|S_L|$ are the indices set and the number of the landmark vertices, respectively. Landmark-based registration aims to align the landmark vertices such that the features of the surfaces are also aligned.

The conformal energy term guarantees the conformality of the map, while the registration loss term aligns the features of the surfaces. Therefore, the combination of the conformal energy and the registration loss results in a conformal (as possible) registration map. The conformal registration map is an elastic registration and preserves the local shape of the surface, which is widely used in the field of medical imaging. For the optimization problem \eqref{opt:reg}, it is easy to derive the gradient vector and Hessian matrix of registration loss generally. Benefiting from their simple representations, we can also utilize HBTR to solve it. Moreover, the conformal energy is invariant up to arbitrary rotation on $\mathbb{S}^2$. Therefore, we introduce an optimal rotation to further decrease the registration loss. Based on \Cref{alg:TR}, we present the following spherical conformal registration algorithm.

\begin{algorithm}[thp]
\caption{HBTR for spherical conformal registration}
\label{alg:TRREG}
\begin{algorithmic}[1]
\REQUIRE Fixed surface $M_0$ and moving surface $M_t$ with landmarks, registration parameter $\lambda$, tolerance $\varepsilon$.
    \ENSURE The conformal registration map $\tilde{f}_{reg,t}$.
    \STATE Compute the spherical conformal map of $M_0$ by \Cref{alg:TR}, denoted as $f_0$.
    \STATE Set $k = 0$ and $\delta^{(0)} = +\infty$.
    \STATE Compute the initial guess $\mathbf{f}$ of $M_t$.
    \STATE Compute the optimal rotation $R$ according to the landmarks, update $\mathbf{f} \gets \mathbf{f}R$ and compute the corresponding term $E_B^{(0)}$ in \eqref{opt:reg}.
    \WHILE{$\delta>\varepsilon$}
    \STATE Compute gradient vector $\mathbf{g}$ and Hessian matrix $H$ of \eqref{opt:reg}.
    \STATE Solve the linear system $H\mathbf{s} = -\mathbf{g}$ via block LU decomposition.
    \STATE Solve the trust region subproblem \eqref{opt:subtr2D} to get the trial step $\mathbf{d}$.
    \STATE Compute the optimal rotation $R$ according to the landmarks to update $\mathbf{d}$.
    \STATE Let $E \gets E_B\big((\boldsymbol{\theta},{\boldsymbol{\phi}}) + \mathbf{d}\big)$. If $E_B^{(k)} > E$, set $k \gets k+1$ and update
\begin{align*}
        &(\boldsymbol{\theta},{\boldsymbol{\phi}}) \gets (\boldsymbol{\theta},{\boldsymbol{\phi}}) + \mathbf{d},\\
        &E_B^{(k)} \gets E.
    \end{align*}
    \STATE Compute the error $\delta^{(k)}$ by \eqref{eq:error} and tune the trust region radius $\mit\Delta$.
    \ENDWHILE
    \STATE Let $\mathbf{f}_{reg,t} = [\cos\boldsymbol{\theta} \odot \sin{\boldsymbol{\phi}},
\sin\boldsymbol{\theta} \odot \sin{\boldsymbol{\phi}},
\cos{\boldsymbol{\phi}}]$. Compute $\tilde{f}_{reg,t} = f_0^{-1}\circ f_{reg,t}$, which is the registration map of $M_t$.
\end{algorithmic}
\end{algorithm}

To present the registration performance of our method, we
take $5$ right brain cortex meshes {\it RBrain0 - RBrain4} from the Human Connectome Project \cite{HCP} as an example, which are shown in the top row of \Cref{fig:Brain4Reg}, in which the regions in different colors represent different parts of the brain. We select $3$ landmark curves for each brain, which are in red, green and cyan, respectively, as shown in the top row of \Cref{fig:Brain4Reg}. We select {\it RBrain0} as the fixed surface and register {\it RBrain1 - RBrain4} to it. The parameter $\lambda$ in \eqref{opt:reg} is chosen as $1,5,10$ successively.
The middle row shows the resulting spheres by solving the registration problem \eqref{opt:reg} with the landmark curves, respectively, with $\lambda = 5$. The bottom row shows the corresponding registered brains. The high similarity of the landmark curves between the fixed brain and registered brains illustrates the well performance of our method. Notably, all registration maps are bijective. \Cref{tab:Reg} presents their conformal energies, angle distortions and registration losses. As the parameter $\lambda$ increases, the conformal energies and angle distortions remain low.

\begin{figure}[h]
    \centering
\begin{tabular}{c@{~}c@{~}c@{~}c@{~}c}
        \includegraphics[clip,trim = {2.2cm 2.0cm 1.8cm 1.4cm},width = 0.18\textwidth]{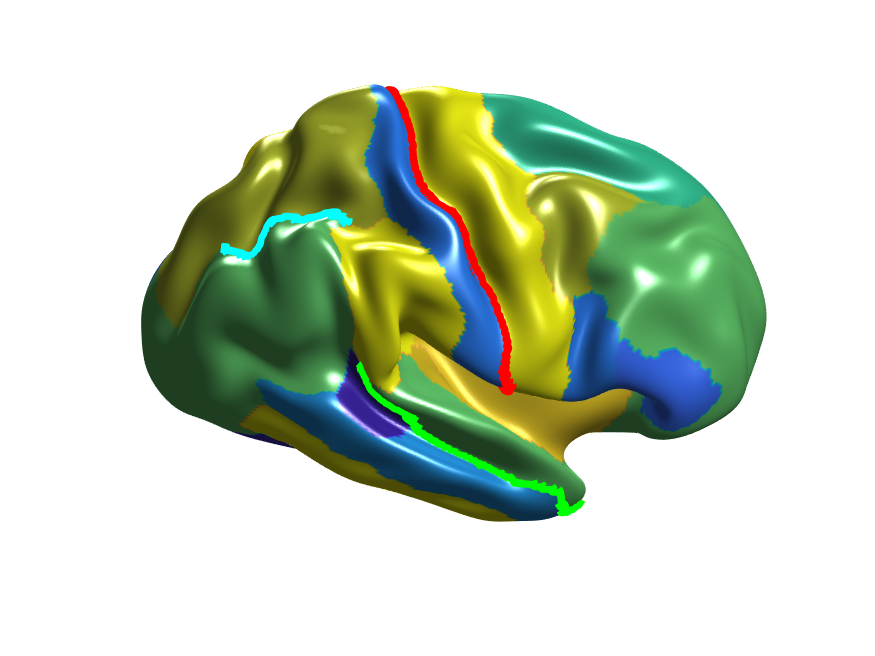} &
        \includegraphics[clip,trim = {2.2cm 2.0cm 1.8cm 1.4cm},width = 0.18\textwidth]{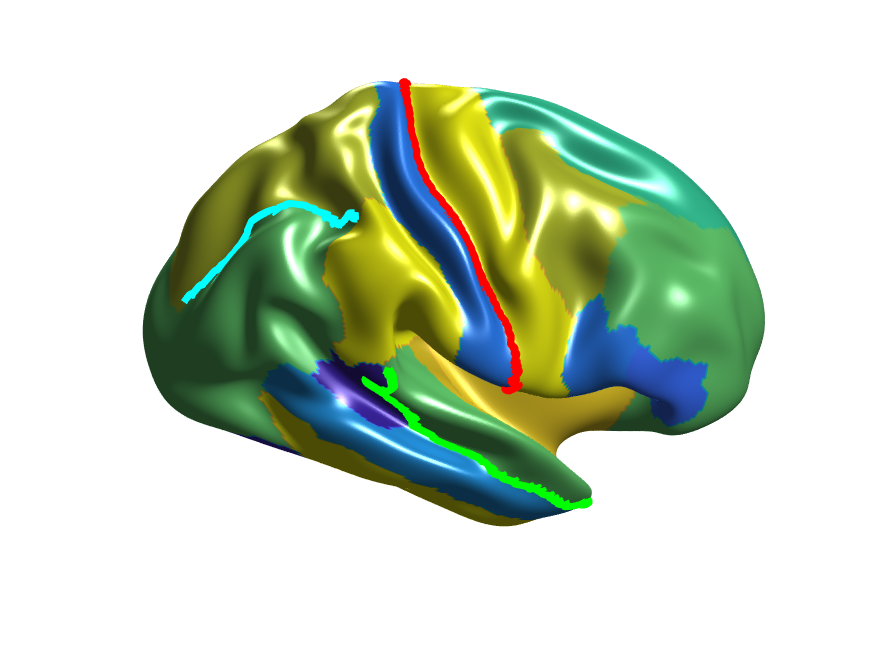} &
        \includegraphics[clip,trim = {2.2cm 2.0cm 1.8cm 1.4cm},width = 0.18\textwidth]{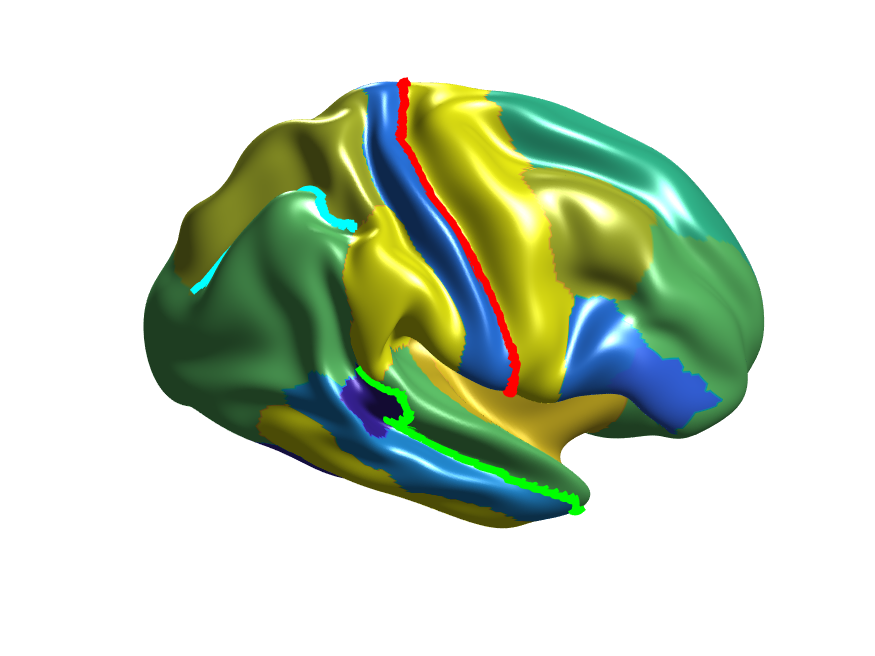} &
        \includegraphics[clip,trim = {2.2cm 2.0cm 1.8cm 1.4cm},width = 0.18\textwidth]{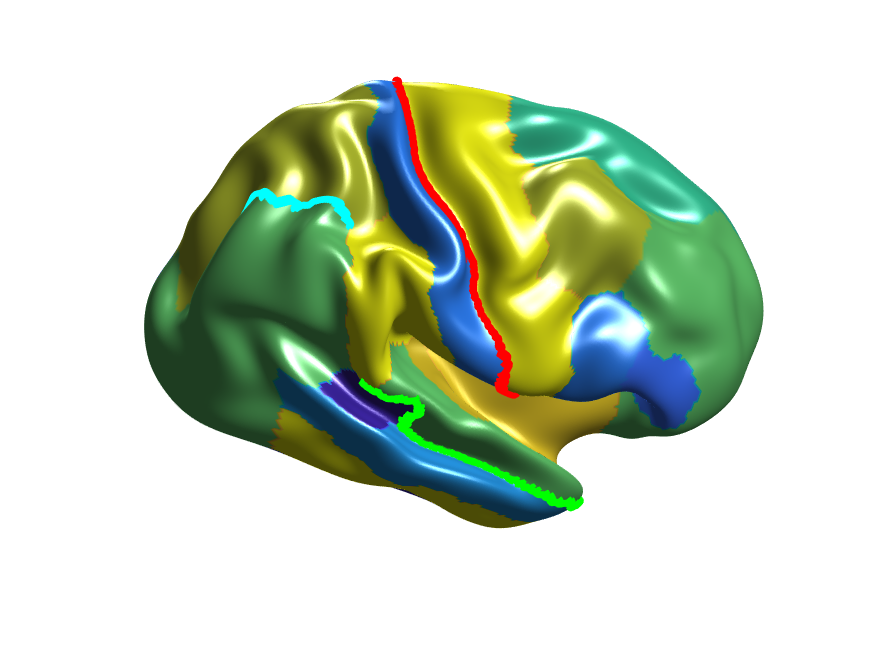} &
        \includegraphics[clip,trim = {2.2cm 2.0cm 1.8cm 1.4cm},width = 0.18\textwidth]{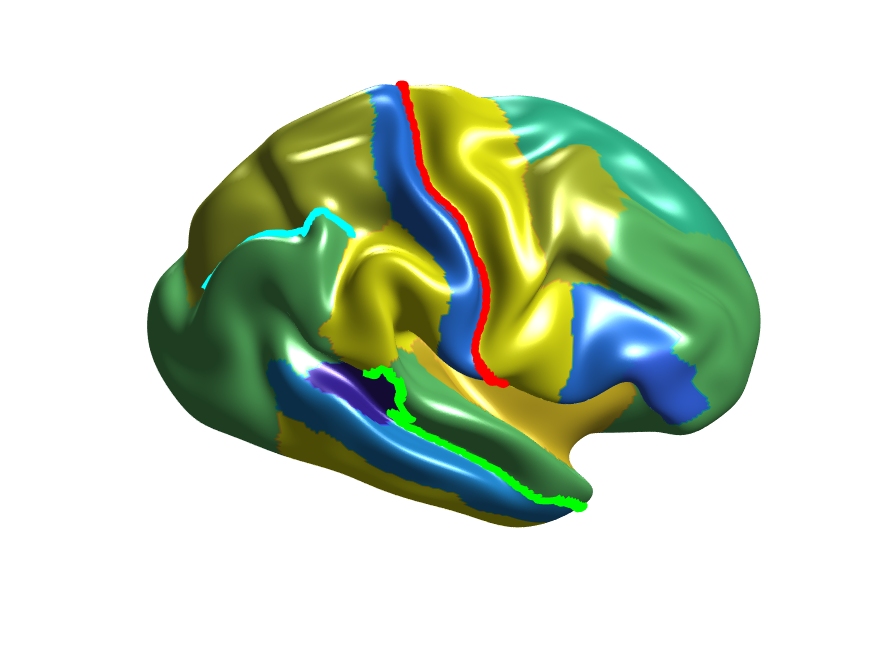} \\
        \includegraphics[clip,trim = {3cm 1cm 2.5cm 0.5cm},width = 0.18\textwidth]{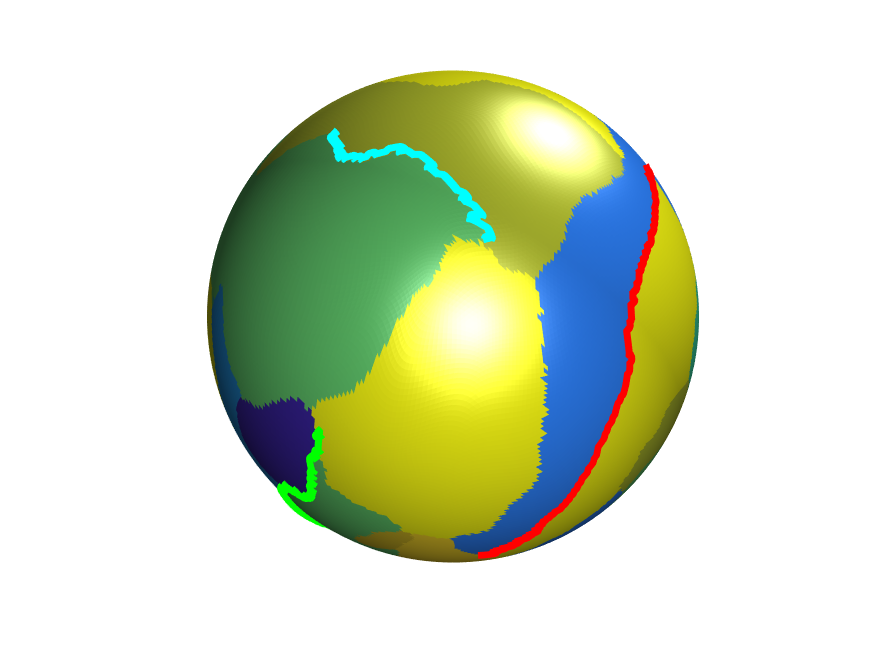} &
        \includegraphics[clip,trim = {3cm 1cm 2.5cm 0.5cm},width = 0.18\textwidth]{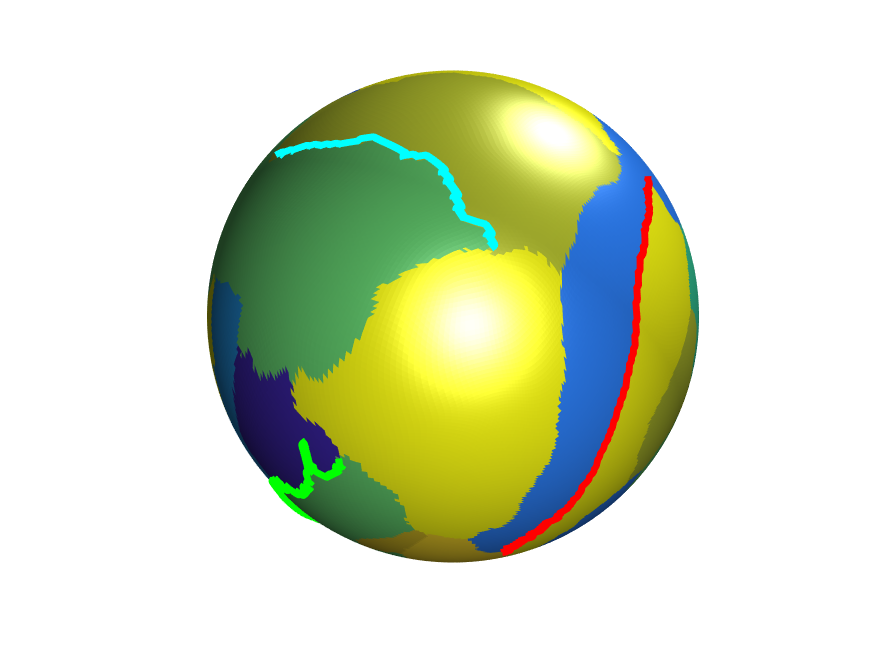} &
        \includegraphics[clip,trim = {3cm 1cm 2.5cm 0.5cm},width = 0.18\textwidth]{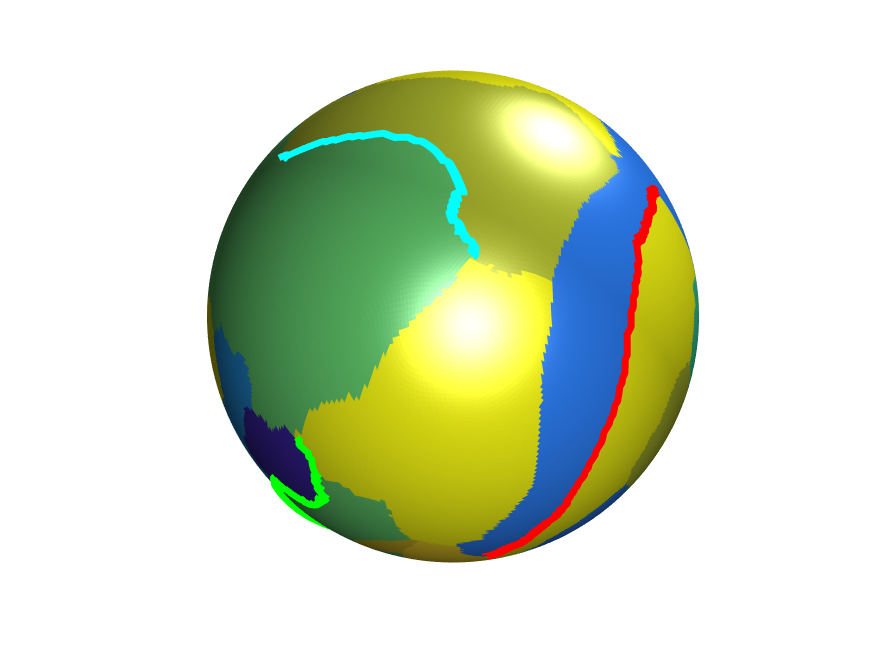} &
        \includegraphics[clip,trim = {3cm 1cm 2.5cm 0.5cm},width = 0.18\textwidth]{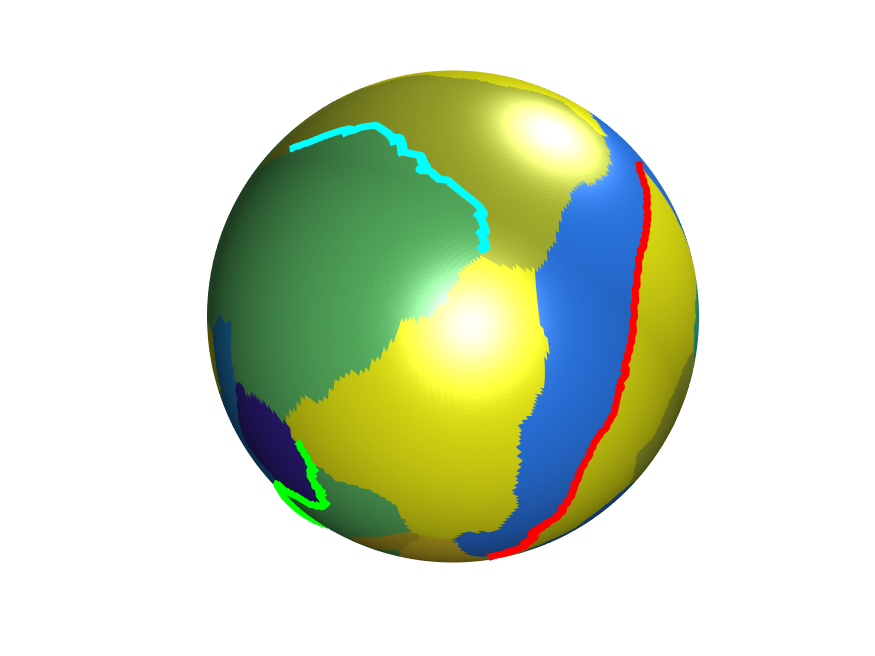} &
        \includegraphics[clip,trim = {3cm 1cm 2.5cm 0.5cm},width = 0.18\textwidth]{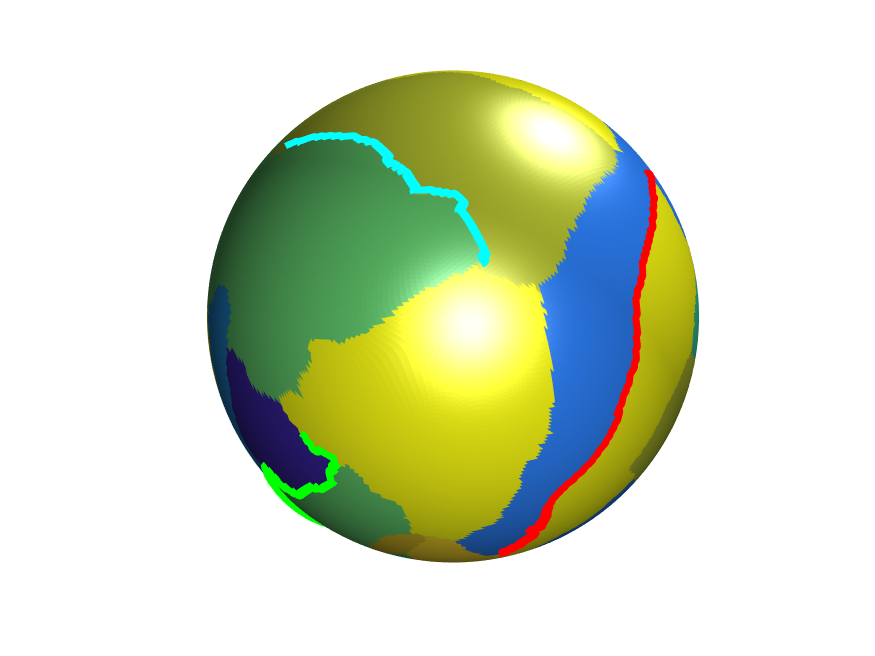}\\
        \includegraphics[clip,trim = {2.2cm 2.0cm 1.8cm 1.4cm},width = 0.18\textwidth]{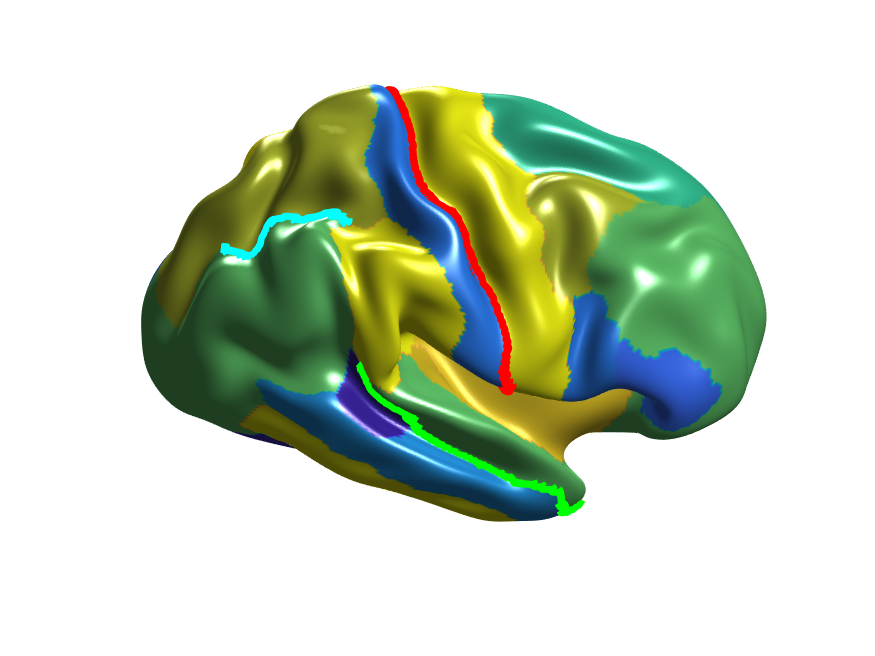} &
        \includegraphics[clip,trim = {2.2cm 2.0cm 1.8cm 1.4cm},width = 0.18\textwidth]{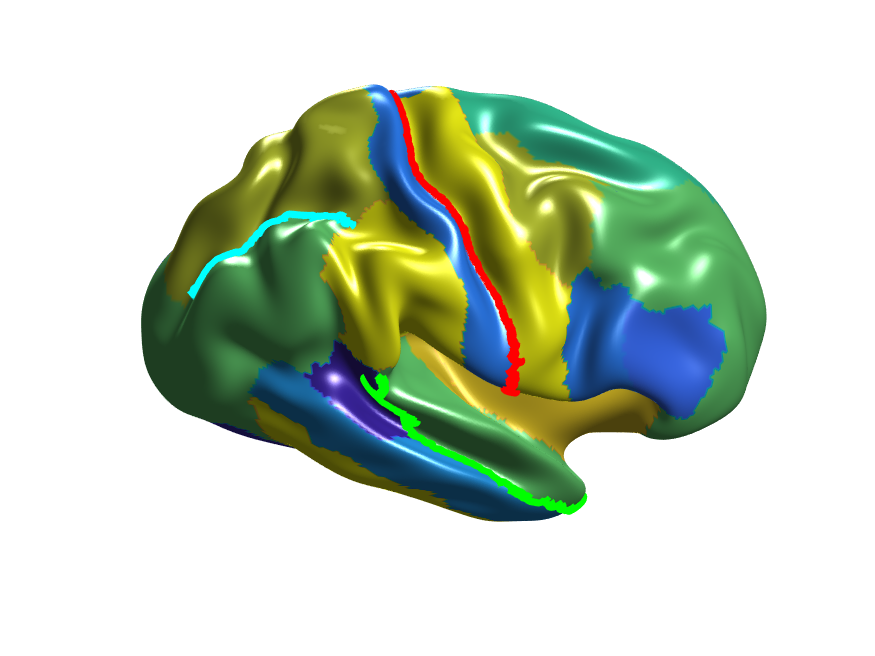} &
        \includegraphics[clip,trim = {2.2cm 2.0cm 1.8cm 1.4cm},width = 0.18\textwidth]{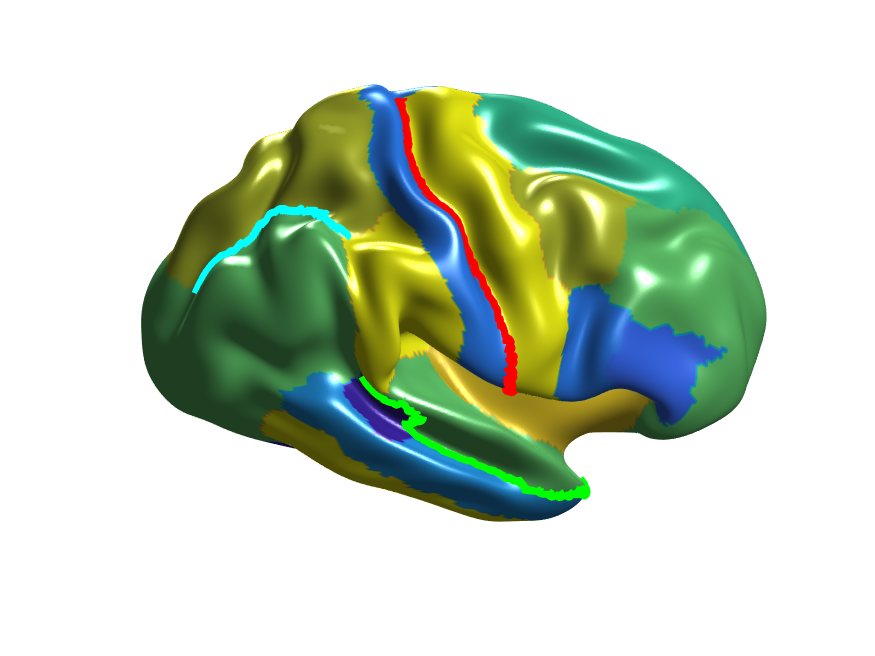} &
        \includegraphics[clip,trim = {2.2cm 2.0cm 1.8cm 1.4cm},width = 0.18\textwidth]{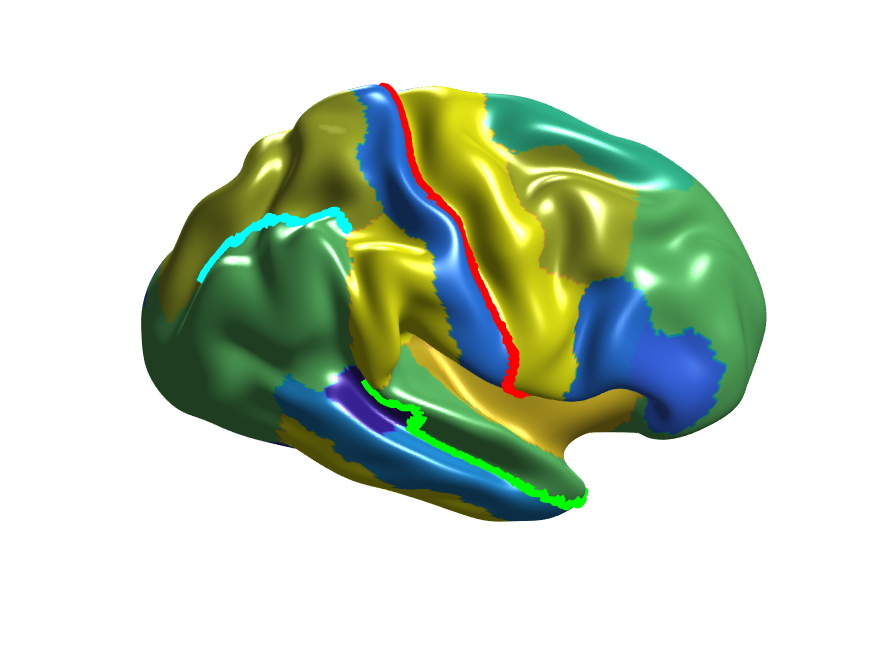} &
        \includegraphics[clip,trim = {2.2cm 2.0cm 1.8cm 1.4cm},width = 0.18\textwidth]{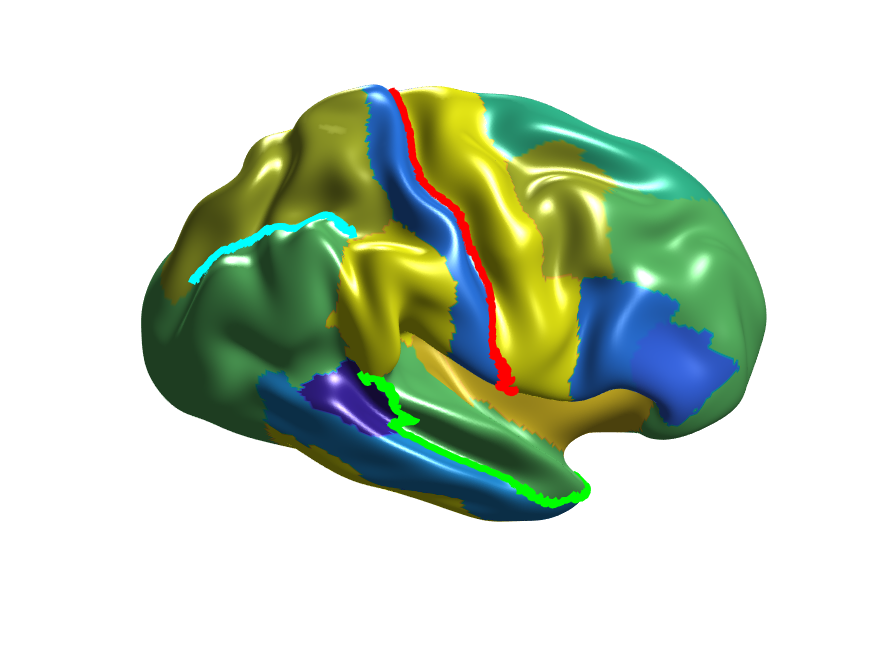} \\
RBrain0 (fixed) & RBrain1 & RBrain2 & RBrain3 & RBrain4
\end{tabular}
\caption{The right brains for registration, the registered spheres and the registered brains from top to bottom with $\lambda = 5$. The curves in red, green and cyan are the landmark curves.}
    \label{fig:Brain4Reg}
\end{figure}

\begin{table}[h]
    \centering
    \begin{tabular}{c|c|c|c|c|c}
        \hline
        \multirow{2}{*}{Mesh} & \multirow{2}{*}{$\lambda$} &
        Conformal &
        \multicolumn{2}{c|}{Angle distortion} &
        Registration \\
        \cline{4-5}
        & & energy & Mean & SD & loss \\
        \hline
        \multirow{3}{*}{RBrain1}
        & 1 & 1.31e-03 & 3.56e-01 & 3.13e-01 & 4.95e-03 \\
        & 5 & 3.41e-03 & 5.53e-01 & 5.27e-01 & 3.55e-03 \\
        & 10 & 4.73e-03 & 6.28e-01 & 6.21e-01 & 1.39e-03 \\
        \hline
        \multirow{3}{*}{RBrain2}
        & 1 & 1.18e-03 & 3.50e-01 & 3.20e-01 & 4.35e-03 \\
        & 5 & 3.00e-03 & 5.31e-01 & 5.43e-01 & 3.90e-03 \\
        & 10 & 8.31e-03 & 7.55e-01 & 8.92e-01 & 3.00e-03 \\
        \hline
        \multirow{3}{*}{RBrain3}
        & 1 & 9.98e-04 & 3.28e-01 & 2.89e-01 & 3.87e-03 \\
        & 5 & 2.34e-03 & 4.69e-01 & 4.42e-01 & 2.66e-03 \\
        & 10 & 5.28e-03 & 5.86e-01 & 6.55e-01 & 1.83e-03 \\
        \hline
        \multirow{3}{*}{RBrain4}
        & 1 & 1.05e-03 & 3.35e-01 & 3.02e-01 & 2.67e-03 \\
        & 5 & 2.63e-03 & 4.93e-01 & 4.82e-01 & 3.04e-03 \\
        & 10 & 6.65e-03 & 7.38e-01 & 7.41e-01 & 2.40e-03 \\
        \hline
    \end{tabular}
    \caption{The conformal energies, angle distortions and registration losses of registered brain surfaces with $\lambda = 1,5,10$, respectively.}
    \label{tab:Reg}
\end{table}

\section{Conclusions} \label{sec:conclusion}

In this paper, we employ spherical coordinates and directly solve the spherical CEM problem for the computation of the surface conformal parameterization. Then, we give the explicit derivations of the gradient vector and the Hessian matrix of the discrete conformal energy, which preserves the sparsity as the Laplacian matrix. Due to the sparsity of the Hessian matrix, the robust algorithm, called HBTR, is developed to solve the spherical CEM problem. HBTR sufficiently combines the local quadratic convergence and continuing descent advantages of the gradient and the Newton directions. The numerical experiments actually show the conformality, the robustness and the local quadratic convergence of the HBTR. For the discrete scheme, we also present the quadratic convergence of the discrete conformal energy to the continuous scheme. Since the gradient vector and Hessian matrix of the registration loss have simple representations, we utilize the HBTR to propose a modified version of HBTR for the application to surface registrations. Significantly, the modified algorithm \ref{alg:TRREG} also has quadratic convergence, suggesting its potential for extension to other applications of our method.

\section*{Acknowledgements}
T. Li was supported in parts by the National Natural Science Foundation of China (NSFC) 12371377. W.-W. Lin was partially supported by the Ministry of Science and Technology (MoST 110-2115-M-A49-004), Taiwan. This work was partially supported by National Centre of Theoretical Sciences (NCTS) in Taiwan. We thank Tianhe-2 and the Big Data Computing Center in Southeast University, China, for their support of our use of their computing resources.

\bibliographystyle{plain}

\begin{thebibliography}{10}

\bibitem{ALICE}
{ALICE}.
\newblock \url{http://alice.loria.fr/}.
\newblock (2016).

\bibitem{AIM}
{Digital Shape Workbench - Shape Repository}.
\newblock \url{http://visionair.ge.imati.cnr.it/ontologies/shapes/}.
\newblock (2016).

\bibitem{Choi}
{Gary P. T. Choi's website}.
\newblock \url{https://www.math.cuhk.edu.hk/~ptchoi/}.
\newblock (2023).

\bibitem{HCP}
{Human Connectome Project}.
\newblock \url{http://www.humanconnectome.org}.
\newblock (2023).

\bibitem{Stanford}
{The Stanford 3D Scanning Repository}.
\newblock \url{http://graphics.stanford.edu/data/3Dscanrep/}.
\newblock (2023).

\bibitem{Turbo}
{TurboSquid}.
\newblock \url{https://www.turbosquid.com/}.
\newblock (2023).

\bibitem{GuWebsite}
{David Xianfeng Gu's website}.
\newblock \url{http://www3.cs.stonybrook.edu/~gu/}, (2017).

\bibitem{Iso2Mesh}
{Iso2Mesh}.
\newblock \url{http://iso2mesh.sourceforge.net}, (2018).

\bibitem{ASEP06}
Emil~Praun Alla~Sheffer and Kenneth Rose.
\newblock Mesh parameterization methods and their applications.
\newblock {\em Foundations and Trends in Computer Graphics and Vision},
  2(2):105--171, 2006.

\bibitem{PAGU08}
Pierre Alliez, Giuliana Ucelli, Craig Gotsman, and Marco Attene.
\newblock Recent advances in remeshing of surfaces.
\newblock In {\em Shape Analysis and Structuring}, pages 53--82. Springer
  Berlin Heidelberg, 2008.

\bibitem{BRATS20232}
Spyridon Bakas, Ujjwal Baid, Keyvan Farahani, et~al.
\newblock {The International Brain Tumor Segmentation (BraTS) Cluster of
  Challenges}, 2023.

\bibitem{LBPS02}
L{\'e}vy Bruno, Petitjean Sylvain, Ray Nicolas, and Maillot J{\'e}rome.
\newblock Least squares conformal maps for automatic texture atlas generation.
\newblock {\em ACM Transactions on Graphics}, 21(3):362--371, 2002.

\bibitem{RBRS88}
Richard~H. Byrd, Robert~B. Schnabel, and Gerald~A. Shultz.
\newblock Approximate solution of the trust region problem by minimization over
  two-dimensional subspaces.
\newblock {\em Mathematical Programming}, 40(1-3):247--263, 1988.

\bibitem{GPYL20}
Gary P.~T. Choi, Yusan Leung-Liu, Xianfeng Gu, and Lok~Ming Lui.
\newblock Parallelizable global conformal parameterization of simply-connected
  surfaces via partial welding.
\newblock {\em SIAM Journal on Imaging Sciences}, 13(3):1049--1083, 2020.

\bibitem{PTKC15}
Pui~Tung Choi, Ka~Chun Lam, and Lok~Ming Lui.
\newblock Flash: Fast landmark aligned spherical harmonic parameterization for
  genus-0 closed brain surfaces.
\newblock {\em SIAM Journal on Imaging Sciences}, 8(1):67--94, 2015.

\bibitem{MDMM02}
Mathieu Desbrun, Mark Meyer, and Pierre Alliez.
\newblock Intrinsic parameterizations of surface meshes.
\newblock {\em Computer Graphics Forum}, 21(3):209--218, 2002.

\bibitem{Darr14phd}
Darren Engwirda.
\newblock {\em Locally optimal Delaunay-refinement and optimisation-based mesh
  generation}.
\newblock PhD thesis, School of Mathematics and Statistics, The University of
  Sydney, 2014.

\bibitem{Darr15}
Darren Engwirda.
\newblock Voronoi-based point-placement for three-dimensional
  delaunay-refinement.
\newblock {\em Procedia Engineering}, 124:330--342, 2015.

\bibitem{Darr16}
Darren Engwirda.
\newblock Conforming restricted delaunay mesh generation for piecewise smooth
  complexes.
\newblock {\em Procedia Engineering}, 163:84--96, 2016.

\bibitem{Darr18}
Darren Engwirda.
\newblock Generalised primal-dual grids for unstructured co-volume schemes.
\newblock {\em Journal of Computational Physics}, 375:155--176, 2018.

\bibitem{DEDI16}
Darren Engwirda and David Ivers.
\newblock Off-centre steiner points for delaunay-refinement on curved surfaces.
\newblock {\em Computer-Aided Design}, 72:157--171, 2016.

\bibitem{Hoss22}
Hossein Eskandari.
\newblock Strictly conformal transformation optics for directivity enhancement
  and unidirectional cloaking of a cylindrical wire antenna.
\newblock {\em Scientific Reports}, 12(1), 2022.

\bibitem{QFDA09}
Qianqian Fang and David~A. Boas.
\newblock Tetrahedral mesh generation from volumetric binary and grayscale
  images.
\newblock In {\em 2009 {IEEE} International Symposium on Biomedical Imaging:
  From Nano to Macro}. {IEEE}, 2009.

\bibitem{Floa03}
Michael~S. Floater.
\newblock Mean value coordinates.
\newblock {\em Computer Aided Geometric Design}, 20(1):19--27, 2003.

\bibitem{MFKH05}
Michael~S. Floater and Kai Hormann.
\newblock Surface parameterization: a tutorial and survey.
\newblock In {\em Advances in Multiresolution for Geometric Modelling}, pages
  157--186. Springer Berlin Heidelberg, 2005.

\bibitem{MGBS21}
Mark Gillespie, Boris Springborn, and Keenan Crane.
\newblock Discrete conformal equivalence of polyhedral surfaces.
\newblock {\em {ACM} Transactions on Graphics}, 40(4):1--20, 2021.

\bibitem{XGYW04}
Xianfeng Gu, Yalin Wang, Tony~F. Chan, Paul~M. Thompson, and Shing-Tung Yau.
\newblock Genus zero surface conformal mapping and its application to brain
  surface mapping.
\newblock {\em IEEE Transactions on Medical Imaging}, 23(8):949--958, 2004.

\bibitem{XGST07book}
Xianfeng Gu and Shing-Tung Yau.
\newblock {\em Computational Conformal Geometry}, volume~3 of {\em Advanced
  Lectures in Mathematics}.
\newblock International Press and Higher Education Press, 2007.

\bibitem{XGST20book}
Xianfeng Gu and Shing-Tung Yau.
\newblock {\em Computational Conformal Geometry}.
\newblock International Press and Higher Education Press, 2020.

\bibitem{XDWZ11}
Xianfeng~David Gu, Wei Zeng, Feng Luo, and Shing-Tung Yau.
\newblock Numerical computation of surface conformal mappings.
\newblock {\em Computational Methods and Function Theory}, 11(2):747--787,
  2011.

\bibitem{KCRS13}
Krishna~Chaitanya Gurijala, Rui Shi, Wei Zeng, Xianfeng Gu, and Arie Kaufman.
\newblock Colon flattening using heat diffusion riemannian metric.
\newblock {\em {IEEE} Transactions on Visualization and Computer Graphics},
  19(12):2848--2857, 2013.

\bibitem{SHSA00}
Steven Haker, Sigurd Angenent, Allen Tannenbaum, Ron Kikinis, Guillermo Sapiro,
  and Michael Halle.
\newblock Conformal surface parameterization for texture mapping.
\newblock {\em IEEE Transactions on Visualization and Computer Graphics},
  6(2):181--189, 2000.

\bibitem{KHBL07}
Kai Hormann, Bruno L{\'e}vy, and Alla Sheffer.
\newblock Mesh parameterization: Theory and practice.
\newblock In {\em ACM SIGGRAPH Course Notes}, 2007.

\bibitem{WQXD14}
Wei-Qiang Huang, Xianfeng~David Gu, Tsung-Ming Huang, Song-Sun Lin, Wen-Wei
  Lin, and Shing-Tung Yau.
\newblock High performance computing for spherical conformal and riemann
  mappings.
\newblock {\em Geometry, Imaging and Computing}, 1(2):223--258, 2014.

\bibitem{Hutc91}
John~E. Hutchinson.
\newblock Computing conformal maps and minimal surfaces.
\newblock {\em Proceedings of the Centre for Mathematics and its Applications},
  26:140--161, 1991.

\bibitem{MJJK08}
Miao Jin, Junho Kim, Feng Luo, and Xianfeng Gu.
\newblock Discrete surface ricci flow.
\newblock {\em IEEE Transactions on Visualization and Computer Graphics},
  14(5):1030--1043, 2008.

\bibitem{YCWW21}
Yueh-Cheng Kuo, Wen-Wei Lin, Mei-Heng Yueh, and Shing-Tung Yau.
\newblock Convergent conformal energy minimization for the computation of disk
  parameterizations.
\newblock {\em SIAM Journal on Imaging Sciences}, 14(4):1790--1815, 2021.

\bibitem{RLZW14}
Rongjie Lai, Zaiwen Wen, Wotao Yin, Xianfeng Gu, and Lok~Ming Lui.
\newblock Folding-free global conformal mapping for genus-0 surfaces by
  harmonic energy minimization.
\newblock {\em Journal of Scientific Computing}, 58(3):705--725, 2014.

\bibitem{JNTL07}
Jingxin Nie, Tianming Liu, Gang Li, Geoffrey Young, Ashley Tarokh, Lei Guo, and
  Stephen~T.C. Wong.
\newblock Least-square conformal brain mapping with spring energy.
\newblock {\em Computerized Medical Imaging and Graphics}, 31(8):656--664,
  2007.

\bibitem{UPKP93}
Ulrich Pinkall and Konrad Polthier.
\newblock Computing discrete minimal surfaces and their conjugates.
\newblock {\em Experimental Mathematics}, 2(15):15--36, 1993.

\bibitem{RSKC17}
Rohan Sawhney and Keenan Crane.
\newblock Boundary first flattening.
\newblock {\em ACM Transactions on Graphics}, 37(1):5:1--5:14, 2017.

\bibitem{RSWZ17}
Rui Shi, Wei Zeng, Zhengyu Su, Jian Jiang, Hanna Damasio, Zhonglin Lu, Yalin
  Wang, Shing-Tung Yau, and Xianfeng Gu.
\newblock Hyperbolic harmonic mapping for surface registration.
\newblock {\em {IEEE} Transactions on Pattern Analysis and Machine
  Intelligence}, 39(5):965--980, 2017.

\bibitem{GSRB85}
Gerald~A. Shultz, Robert~B. Schnabel, and Richard~H. Byrd.
\newblock A family of trust-region-based algorithms for unconstrained
  minimization with strong global convergence properties.
\newblock {\em {SIAM} Journal on Numerical Analysis}, 22(1):47--67, 1985.

\bibitem{APSY20}
Anh~Phong Tran, Shijie Yan, and Qianqian Fang.
\newblock Improving model-based functional near-infrared spectroscopy analysis
  using mesh-based anatomical and light-transport models.
\newblock {\em Neurophotonics}, 7(01):1, 2020.

\bibitem{LXHC14}
Lin Xu and Huanyang Chen.
\newblock Conformal transformation optics.
\newblock {\em Nature Photonics}, 9(1):15--23, 2014.

\bibitem{YLRG09}
Yong-Liang Yang, Ren Guo, Feng Luo, Shi-Min Hu, and Xianfeng Gu.
\newblock Generalized discrete ricci flow.
\newblock {\em Computer Graphics Forum}, 28(7):2005--2014, 2009.

\bibitem{MHTM21}
Mei-Heng Yueh, Tsung-Ming Huang, Tiexiang Li, Wen-Wei Lin, and Shing-Tung Yau.
\newblock Projected gradient method combined with homotopy techniques for
  volume-measure-preserving optimal mass transportation problems.
\newblock {\em Journal of Scientific Computing}, 88(3), 2021.

\bibitem{MHTL19}
Mei-Heng Yueh, Tiexiang Li, Wen-Wei Lin, and Shing-Tung Yau.
\newblock A novel algorithm for volume-preserving parameterizations of
  3-manifolds.
\newblock {\em SIAM Journal on Imaging Sciences}, 12(2):1071--1098, 2019.

\bibitem{MHWW17}
Mei-Heng Yueh, Wen-Wei Lin, Chin-Tien Wu, and Shing-Tung Yau.
\newblock An efficient energy minimization for conformal parameterizations.
\newblock {\em Journal of Scientific Computing}, 73(1):203--227, 2017.

\end{thebibliography}

\end{document}